\renewcommand{\d}{\partial }
\numberwithin{equation}{section}
\newtheorem{Theorem}{Theorem}[section]
\newtheorem{Prop}[Theorem]{Proposition} 
\newtheorem{Corollary}[Theorem]{Corollary} 
\newtheorem{Lemma}[Theorem]{Lemma}
\theoremstyle{definition} 
\newtheorem{Definition}[Theorem]{Definition}
\newtheorem*{overview}{Overview}
\newtheorem*{Acknowledgement}{Acknowledgement}
\theoremstyle{remark} 
\newtheorem{Remark}[Theorem]{Remark}
\newtheorem{Example}[Theorem]{Example}
\newcommand{\rr}{\mathbb{R}}
\newcommand{\zz}{\mathbb{Z}}
\newcommand{\nn}{\mathbb{N}}
\newcommand{\cU}{\mathcal{U}}
\DeclareMathOperator\diam{\text{diam}}
\DeclareMathOperator\myc{C}
\DeclareMathOperator\myh{H}
 \DeclareMathOperator\im{im}
\newcommand{\comment}[1]{}
\newcommand{\C}[1][*]{\myc^{#1}}
\newcommand{\Cas}[1][*]{\myc^{#1}_{as}}
\newcommand{\Ccas}[1][*]{\myc^{#1}_{cas}}
\newcommand{\Cx}[1][*]{\myc{\mathrm X}^{#1}}
\newcommand{\Cb}[1][*]{\myc{\mathrm B}^{#1}}
\newcommand{\Cc}[1][*]{\myc^{#1}_{\mathrm c}}
\newcommand{\Cz}[1][*]{\myc^{#1}_{0}}
\newcommand{\cs}[1][*]{\myc_{#1}^{\mathrm s}}
\newcommand{\cf}[1][*]{\myc_{#1}}
\newcommand{\cff}[1][*]{\myc_{#1}^{F}}
\newcommand{\csu}[1][*]{\myc_{#1}^{\cU}}
\renewcommand{\H}[1][*]{\myh^{#1}}
\newcommand{\Hr}[1][*]{\Tilde{\operatorname{H}}\raisebox{0.5pt}{$^{#1}$}}
\newcommand{\Hc}[1][*]{\myh^{#1}_{\mathrm c}}
\newcommand{\Hb}[1][*]{\myh{\mathrm B}^{#1}}
\newcommand{\Hx}[1][*]{\myh{\mathrm X}^{#1}}
\newcommand{\gs}{\sigma }
\newcommand{\gD}{\Delta}
\renewcommand{\ge}{\varepsilon}
\let\oldsubset\subset
\renewcommand{\subset}[1][]{\overset{#1}{\oldsubset}}
\let\oldnotin\notin
\renewcommand{\notin}[1][]{\overset{#1}{\oldnotin}}
\DeclarePairedDelimiter\abs{\lvert}{\rvert}
\newcommand{\Supp}[2][]{\abs{#2}_{#1}}
\DeclarePairedDelimiter\spp{\lVert}{\rVert}
\NewDocumentCommand\ccap{o}{\mathbin{\overset{\mathrm{c}
	\IfNoValueTF{#1}{} {, #1} }
	{\cap}  } }
\NewDocumentCommand\cminus{o}{\mathbin{\oset{\mathrm{c}
	\IfNoValueTF{#1}{} {, #1} }
	{-}  } }	
\NewDocumentCommand\nceq{o}{\mathbin{\overset{\mathrm{c}
	\IfNoValueTF{#1}{} {, #1} }
	{\neq}  } }	
\NewDocumentCommand\ceq{o}{\mathbin{\overset{\mathrm{c}
	\IfNoValueTF{#1}{} {, #1} }
	{=}  } }	
	\NewDocumentCommand\csubset{o}{\subset[\mathrm{c}\IfNoValueTF{#1}{} {, #1} ]}
\NewDocumentCommand\he{o}{\mathbin{\overset{ \scriptscriptstyle{#1}}
	{\simeq}  } }	
\newcommand{\oset}[3][0ex]{
  \mathrel{\mathop{#3}\limits^{
    \vbox to#1{\kern 0\ex@
    \hbox{$\scriptstyle#2$}\vss}}}}
\begin{document}
\title{On the computation of coarse cohomology}
\author{Arka Banerjee}
\address{Department of Mathematics and Statistics\\
Auburn University\\
Auburn, AL~36849}
\email{azb0263@auburn.edu}
\maketitle
\begin{abstract}
    The purpose of this article is to relate coarse cohomology of  metric spaces with a more computable cohomology.
We introduce a notion of boundedly supported cohomology and 
   prove that coarse cohomology of many spaces are isomorphic to the boundedly supported cohomology.
Boundedly supported cohomology coincides with compactly supported Alexander--Spanier cohomology if the space is proper and contractible.
Our work generalizes an earlier result of Roe which says that the coarse cohomology is isomorphic to the compactly supported Alexander-Spanier cohomology if the space is uniformly contractible.
As an application of our main theorem, we obtain that coarse cohomology of the complement can be computed in terms of Alexander-Spanier cohomology for many spaces.
\end{abstract}
\section{Introduction}

John Roe~\cite{r93} introduced the notion of  coarse cohomology $\Hx(X)$ of a metric space $X$ to study large scale geometry of the space.
This cohomology roughly measures the way in which uniformly large bounded sets fit together.
Coarse cohomology is an invariant of the large scale geometry of the space: if two spaces are coarsely equivalent, then they have isomorphic coarse cohomology.
In general, coarse cohomology is hard to compute. For nice spaces Roe proved the following.
\begin{Theorem}[Roe~\cite{r93}]\label{Roe theorem}
If $X$ is uniformly contractible, then the coarse cohomology $\Hx(X)$ is isomorphic to the compactly supported Alexander--Spanier cohomology $\Hc(X)$.
\end{Theorem}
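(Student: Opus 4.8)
The plan is to construct an explicit cochain-level comparison between Roe's coarse complex and the compactly supported Alexander--Spanier complex, and to show this comparison is a quasi-isomorphism precisely when uniform contractibility lets us pass between local and large-scale data. Recall that coarse cohomology $\Hx(X)$ is the cohomology of the complex $\Cx(X)$ of functions $\varphi\colon X^{q+1}\to\rr$ whose support meets every ``penumbra'' of a bounded set in a bounded set (the controlled-support condition), while compactly supported Alexander--Spanier cohomology $\Hc(X)$ is computed by germs of functions $\psi\colon X^{q+1}\to\rr$ that vanish near the diagonal, modulo those supported away from a compact set. The two complexes are formally dual in flavor: coarse cochains are controlled at \emph{large} scale and unrestricted near the diagonal, whereas Alexander--Spanier cochains are controlled near the diagonal and unrestricted at large scale.

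First I would set up the natural map. Given a coarse cochain $\varphi$, its support $\operatorname{supp}\varphi\subset X^{q+1}$ is, by definition, within bounded distance of the diagonal outside a bounded set; dually, an Alexander--Spanier cochain is determined by a germ near the diagonal. The idea is to fix a number $R$ and a uniform contraction, and use these to move a cochain supported near the diagonal to one with controlled support, and vice versa, building chain maps
\[
\alpha\colon \Cx(X)\to \Cc(X),\qquad \beta\colon \Cc(X)\to \Cx(X).
\]
Uniform contractibility enters because it provides, for each $R$, an $S\ge R$ together with a uniformly chosen contraction of every $R$-ball inside the concentric $S$-ball; this is exactly the geometric input that lets one build a cochain homotopy carrying the ``diagonal'' scale to the ``controlled'' scale without losing uniformity over $X$.

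Next I would verify that $\alpha$ and $\beta$ are chain maps commuting with the Alexander--Spanier coboundary and Roe's coarse coboundary, and then construct explicit cochain homotopies $\alpha\beta\simeq \mathrm{id}$ and $\beta\alpha\simeq \mathrm{id}$. The homotopies should be assembled from a ``coning'' operation along the uniform contraction: one inserts a contracting path coordinate and integrates (sums) over the resulting prism, the standard device for proving that a contractible piece has trivial reduced cohomology, but performed simultaneously and uniformly across all of $X$ using the uniform contractibility constant. The bookkeeping is to check at each stage that the coning operation preserves the relevant support condition --- controlled support on the coarse side, vanishing-near-diagonal on the Alexander--Spanier side --- which is where the quantitative relationship $R\mapsto S$ between scales must be tracked carefully.

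The main obstacle I anticipate is precisely this uniform support control: an ordinary contraction of each ball produces homotopies that are fine pointwise but whose constants may blow up as one moves out in $X$, destroying either the controlled-support condition or the compact-support condition. Uniform contractibility is what prevents the blow-up, so the crux of the argument is to phrase the coning homotopy so that its effect on supports is governed \emph{solely} by the uniform contractibility function, independent of the base point. Once the homotopies are shown to respect supports uniformly, the standard acyclic-models or prism-subdivision computation shows $\alpha$ and $\beta$ are mutually inverse on cohomology, giving the isomorphism $\Hx(X)\cong\Hc(X)$.
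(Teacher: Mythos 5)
Your outline is in the spirit of the homotopy-theoretic route (the one this paper takes, which is itself different from Roe's original proof via anti-\v{C}ech covers and a double complex), but as written it has two genuine gaps. First, the architecture: you propose cochain maps in both directions, $\alpha\colon\Cx(X)\to\Cc(X)$ and $\beta\colon\Cc(X)\to\Cx(X)$, with two-sided homotopies. There is no natural candidate for $\beta$: an Alexander--Spanier cochain is only a germ near the diagonal (a class in $\Ccas(X)=\Cc(X)/\Cz(X)$), and promoting a germ to a cochain with controlled support is exactly what cannot be done canonically; your $\alpha$, $\beta$ and both homotopies would moreover all have to be well defined modulo the locally zero cochains $\Cz(X)$, a point the proposal never addresses. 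The working substitute, used both here and in effect by Roe, is one-sided: for proper $X$ one has the inclusion $\Cx(X)\hookrightarrow\Cb(X)=\Cc(X)$, contractibility makes $\Cz(X)$ acyclic so that $\Hc(X)\cong\H[*](\Cc(X))$ (Example~\ref{Hb=Hc}), and one then proves $\H[*](\Cb(X)/\Cx(X))=0$ by a single operator $T^{*}=\mathrm{id}-D^{*}d-dD^{*}$ dual to a controlled filling homotopy, as in the proofs of Theorems~\ref{easier theorem} and~\ref{main theorem}. No homotopy inverse at the cochain level is ever constructed, and your plan stands or falls on producing one.

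Second, and more seriously, the ``uniform coning'' step hides the actual crux. Uniform contractibility controls fillings at \emph{large} scales, but for $D^{*}$ to preserve the compact/bounded support condition one must also fill \emph{small} simplices by \emph{small} chains (this is property~\eqref{i:mpsmall simplex} of Proposition~\ref{main lemma}, used in the step ``$D^{*}(\phi')\in\Cb(X)$''). Uniform contractibility does not imply local contractibility, so contracting each $R$-ball inside an $S$-ball, however uniformly in the base point, does not give this smallness; and Remark~\ref{a curious example} (disjoint Warsaw circles) shows the statement genuinely fails under acyclicity-type hypotheses, so any correct proof must use the contraction itself, not merely the triviality of induced maps that a coning-and-prism argument extracts. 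The paper's resolution --- Kuratowski embedding $X\subset\ell^\infty$, convex fillings of small simplices inside a neighborhood $N(X)$, the gluing of intrinsic and convex fillings via the homotopies of Lemmas~\ref{thin filling} and~\ref{thin filling 2}, and tautness (Proposition~\ref{proj map and nbd}) to pull cochains back from $N(X)$ --- has no counterpart in your proposal. Relatedly, per-ball contractions do not by themselves assemble into a chain map: the fillings must satisfy ``boundary of the filling equals filling of the boundary,'' which requires the inductive construction $f(\gs)=\bar{H}^{B(\gs)}(f(\d\gs))$ rather than independent choices of contractions. Until these points are supplied, the claimed homotopies do not exist as described.
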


A space is called \emph{uniformly contractible} if there exists a function $\rho:\rr_+\rightarrow \rr_+$ such that any set of diameter $r$ is contractible inside a set of diameter $\rho(r)$.
Examples of uniformly contractible spaces include universal cover of compact aspherical complexes.
For example, Euclidean space of any dimension is uniformly contractible and proper. By Roe's Theorem it follows that $\Hx(\rr^n)=\Hc(\rr^n)$.

In this article, our goal is to generalize the above theorem to facilitate computation of coarse cohomology for more spaces.
Our main motivation for doing that is to compute coarse cohomology of the complement of a subspace $A\subset X$  denoted by $\Hx(X-A)$.
This notion was introduced and studied in \cite{BB21}.
One way to describe this cohomology is as follows: consider complements of \emph{expanding} neighborhoods $X-N_f(A)$, where $N_{f}(A)= \bigcup_{x\in A} B(x,f(x))$ for proper functions $f:A\rightarrow (0,\infty)$. 
These complements form a directed system of metric spaces under inclusion (in the direction of slower growth of $f$),  and  coarse (co)chain complexes of the complement $X-A$ is defined to be the inverse limits of the usual coarse complexes of these spaces. 

In~\cite{BB21}, it was shown that the coarse cohomology of the complement can be realized as the coarse cohomology of a single space.
Equip $X$ with the following distance function
\[{d_A}(x,y)=\min\{d(x,A)+d(y,A),d(x,y)\}.\]
$d_A$ defines a  pseudometric on $X$. It induces a metric on  the space $X/\Bar{A}$ which is formed by identifying the closure of $A$ to a point.

\begin{Prop}[\cite{BB21}]\label{model for coarse complement}
Let $X$ be a metric space, $A\subset X$ and $X/\Bar{A}$ is the metric space equipped with the metric induced from $d_A$.  Then  $\Hx(X/\Bar{A})$ is isomorphic to $\Hx(X-A)$.
\end{Prop}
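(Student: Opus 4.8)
The plan is to prove the statement at the level of cochain complexes: I will exhibit an isomorphism between the coarse cochain complex $\Cx(X/\B{A})$ and the inverse-limit complex $\varprojlim_f \Cx(X-N_f(A))$ that defines the complement side, and then pass to cohomology. Here $\Cx(Y)$ denotes the complex of coarse cochains of $Y$. As a set, $X/\B{A} = \{*\}\sqcup(X\setminus\B{A})$, with $*$ the image of $\B{A}$, and one computes $d_A(x,*) = d(x,A)$, so the $d_A$-ball of radius $r$ about $*$ is $\{*\}\cup\{x : d(x,A)\le r\}$, i.e.\ a metric $r$-neighbourhood of $A$. The two elementary metric facts I will use repeatedly are: (i) if a tuple $(x_0,\dots,x_q)$ of points of $X\setminus \B{A}$ has $\diam_{d}\le R$ and every $x_i$ has $d(x_i,A)>R$, then $d_A=d$ on this tuple; and (ii) any tuple with $\diam_{d_A}\le R$ that meets the $r$-neighbourhood of $A$ has all of its coordinates in the $(r+R)$-neighbourhood of $A$. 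Thus, away from a neighbourhood of $A$ the metrics $d_A$ and $d$ agree on controlled tuples, while near $*$ the $d_A$-geometry records exactly the distance to $A$.

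First I would define the comparison map. A coarse cochain $\phi$ on $X/\B{A}$ restricts, for each proper $f$, along the inclusion $X-N_f(A)\hookrightarrow X/\B{A}$ (the basepoint $*$ simply never occurs among tuples of $X-N_f(A)$). By fact (i) this inclusion is controlled, carrying $d$-bounded-diameter tuples far from $A$ to $d_A$-bounded-diameter tuples and conversely, so the restriction $\phi_f := \phi|_{(X-N_f(A))^{q+1}}$ again satisfies the support condition defining a coarse cochain on $(X-N_f(A),d)$. These restrictions are manifestly compatible with the structure maps of the inverse system (they are all restrictions of the single $\phi$), and restriction commutes with the Alexander--Spanier coboundary; hence $\phi\mapsto(\phi_f)_f$ is a morphism of cochain complexes $\Cx(X/\B{A})\to\varprojlim_f \Cx(X-N_f(A))$.

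The substance of the proof is to invert this morphism. Given a compatible family $(\phi_f)_f$, the complements exhaust $X\setminus\B{A}$ (every $x\notin\B{A}$ lies in $X-N_f(A)$ for all sufficiently slowly growing proper $f$), so the family glues to a single function $\psi$ on tuples of $X\setminus\B{A}$, extended by $0$ on any tuple containing $*$; compatibility makes $\psi$ well defined and independent of the chosen $f$. I must then verify that $\psi$ is a genuine coarse cochain on $X/\B{A}$, and this is exactly where the two support conditions have to be matched. Control near the diagonal transfers directly through facts (i)--(ii). The delicate point is the behaviour near the basepoint: one must check Roe's support condition for the part of $\operatorname{supp}(\psi)$ lying over a $d_A$-bounded set $B$ within a penumbra $\{\diam_{d_A}\le R\}$. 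Since $\psi$ vanishes on tuples meeting $*$, fact (ii) confines such tuples to a fixed $d$-neighbourhood of $A$, and the conclusion is then extracted from the fact that each $\phi_f$ already satisfies the support condition on its complement. It is precisely here that the quantifier over \emph{all} proper $f$ in the inverse limit plays its role: the admissible rate at which a coarse cochain on $X/\B{A}$ may accumulate onto $*$ is exactly the rate encoded by the expanding neighbourhoods $N_f(A)$.

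Finally, the two maps are mutually inverse by construction, giving an isomorphism of cochain complexes; because $\Hx(X-A)$ is defined as the cohomology of the inverse-limit complex $\varprojlim_f\Cx(X-N_f(A))$ (the limit being formed \emph{before} taking cohomology), the induced map on cohomology is the desired isomorphism $\Hx(X/\B{A})\cong\Hx(X-A)$, with no $\limone$ correction required. I expect the main obstacle to be precisely the support-matching near $*$ described above: one has to handle carefully that $X/\B{A}$ is generally \emph{not} proper (its balls about $*$ are unbounded neighbourhoods of $A$), so the verification must be carried out with the exact form of the support condition the paper adopts for possibly non-proper spaces, and one must confirm that the slowly growing proper $f$ form a genuinely cofinal family so that the gluing exists and the control estimates transfer uniformly.
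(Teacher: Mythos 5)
Your metric facts (i)--(ii) are correct and the forward restriction map is fine, but the concluding claim --- ``the two maps are mutually inverse by construction, giving an isomorphism of cochain complexes'' --- is false, and this is a genuine gap rather than a technicality. The restriction map $\Cx(X/\B{A})\to\varprojlim_f\Cx(X-N_f(A))$ has an enormous kernel: any cochain supported on tuples that contain the cone point $*$ restricts to zero in every $\Cx(X-N_f(A))$, yet such cochains are typically nonzero and are \emph{automatically} coarse on $(X/\B{A},d_A)$. Indeed, if a tuple containing $*$ has $d_A$-diameter at most $r$, then every coordinate is within $d_A$-distance $r$ of $*$, so the tuple lies in $(N_r(A)\cup\{*\})^{n+1}$; and since every $d_A$-ball about $*$ is exactly such a set, $d_A$-boundedness means ``within bounded distance of $A$'', not $d$-boundedness. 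So the simplest example, the indicator of the diagonal tuple $(*,\dots,*)$, already lies in $\Cx(X/\B{A})$ and dies under your map, while your glue-and-extend-by-zero construction can never recover it. What your argument actually produces (granting the cofinality lemma you flagged, which is needed to rule out support tuples of bounded $d_A$-diameter whose entries escape to infinity away from $A$ --- one must know such entry sets lie in a single $X-N_f(A)$) is a surjection of complexes whose kernel $K$ is the subcomplex of coarse cochains supported on tuples through $*$. The proposition is then equivalent to the acyclicity of $K$, and that is not automatic: the natural cone contraction $(h\phi)(\gs)=\phi(*,\gs)$ preserves neither $K$ nor the coarse support condition, precisely because a tuple $(*,\gs)$ with $\gs$ of small diameter but far from $A$ is far from the diagonal in $d_A$. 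This missing step is the real content of the statement, and your proof does not close without it.

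For comparison: the paper itself gives no proof --- the proposition is imported from \cite{BB21} --- and there the isomorphism is induced by the quotient map $q\colon X\to X/\B{A}$ pulling back into the single complex $\Cx(X-A)\subset\C(X)$ (the description via subsets coarsely disjoint from $A$, restated as Proposition~\ref{coa comp space}), with the isomorphism established at the level of cohomology rather than by a cochain-level identification. That route avoids your obstruction because $q^*$ keeps track of what happens over $\B{A}$ instead of discarding it; any repair of your argument would have to do the same, e.g.\ by proving $H^*(K)=0$ or by replacing the extension-by-zero with a homotopy-theoretic comparison.
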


So, one way to get the coarse cohomology of the complement of $A$ is to understand the coarse cohomology of  $(X/\Bar{A},d_A)$.
 Unfortunately, Theorem~\ref{Roe theorem} does not apply to the computation of $\Hx(X/\Bar{A})$ because  the space $(X/\Bar{A},d_A)$ is rarely uniformly contractible even if $X$ is.
On the other hand, if $X$ is uniformly contractible, then $(X/\Bar{A},d_A)$ retains those properties at infinity in some sense which we describe next.

We say $X$ is \emph{uniformly contractible at infinity} if there exist two non-decreasing control function $\rho, \mu:\rr_+\rightarrow \rr_+$ and a basepoint $b\in X$ such that any set $B$ of diameter $r$ is contractible inside a set of diameter $\rho(r)$ if $d(b,B)\geq \mu(r)$.
 
Our main theorem in this article is the following.

\begin{Theorem}\label{pmain theorem}
 If $X$ is uniformly contractible at infinity, 
 then its coarse cohomology $\Hx(X)$ is isomorphic to its boundedly supported cohomology $\Hb(X)$. 
\end{Theorem}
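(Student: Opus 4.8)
The plan is to realize both $\Hx(X)$ and $\Hb(X)$ as the cohomology of subcomplexes of one ambient complex of locally bounded Borel functions on the tuples $X^{q+1}$, equipped with the Alexander--Spanier coboundary $\delta$, and to compare them through the evident inclusion. I would take the boundedly supported cochains to be those coarse cochains whose support is a bounded subset of $X$. Since bounded support forces all entries of a tuple to lie within a fixed distance of one another, such a cochain is in particular controlled, hence already a coarse cochain; and $\delta$ visibly preserves boundedness of supports. Thus $\Cb$ is a subcomplex of $\Cx$, the inclusion $\iota\colon \Cb \hookrightarrow \Cx$ is a cochain map, and the whole theorem reduces to showing that the induced map $\iota_{*}\colon \Hb(X)\to\Hx(X)$ is an isomorphism.

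Next I would pass to the long exact sequence of the short exact sequence of complexes
\[
0 \to \Cb \xrightarrow{\ \iota\ } \Cx \to Q \to 0,
\]
where $Q = \Cx/\Cb$ is the complex of coarse cochains modulo boundedly supported ones, to be thought of as the germ at infinity of a coarse cochain. Because any coarse cochain may, after subtracting its truncation to a bounded region (a boundedly supported cochain of the same controlled width), be represented by a cochain supported arbitrarily far from a fixed basepoint $b$, a class in $Q$ always has such a representative. By the long exact sequence, $\iota_{*}$ is an isomorphism in every degree precisely when $Q$ is acyclic, so the real content is a Poincaré lemma at infinity: every far-out coarse cocycle is, modulo a boundedly supported cochain, a coboundary of another coarse cochain.

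The heart of the argument is therefore a contracting cochain homotopy $h$ on $Q$ with $\delta h + h\delta = \mathrm{id}$, and this is where the hypothesis enters. Represent a class by a coarse cochain $\phi$ of controlled width $R$ supported in the region $\{\,d(b,\cdot)\geq \mu(R)\,\}$; each tuple there spans a set of diameter $\leq R$ which, by uniform contractibility at infinity, contracts inside a set of diameter $\rho(R)$ that is itself still far from $b$. Using a Borel choice of these contractions I would define $h\phi$ by the usual prism/cone formula, evaluating $\phi$ along the tracks of the contraction. The two control functions do exactly the two jobs required: $\rho$ bounds the width of $h\phi$ by $\rho(R)$, so $h$ preserves $\Cx$; and $\mu$ prevents the contractions from dragging support back toward $b$ beyond a controlled amount, so $h$ preserves support at infinity and descends to $Q$. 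A direct prism computation then yields $\delta h + h\delta = \mathrm{id}$ on $Q$.

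I expect the main obstacle to be precisely this homotopy: making it simultaneously a genuine algebraic contraction, width-controlled via $\rho$, supported at infinity via $\mu$, and locally bounded and Borel measurable. The delicate point is organizing a Borel selection of contractions over the region at infinity so that the resulting operator is well defined independently of the representative and glues to a single operator on $Q$; it is the bookkeeping of how $R$, $\rho(R)$ and the threshold $\mu(R)$ interact that forces the hypothesis to be \emph{uniform} contractibility at infinity rather than mere contractibility at infinity. Once the contraction on $Q$ is in place, the long exact sequence completes the proof, and specializing to proper contractible $X$ — where $\Hb$ agrees with $\Hc$ — recovers Theorem~\ref{Roe theorem}.
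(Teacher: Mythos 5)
Your proposal founders at the definitional stage, and with it the direction of the comparison map. In the paper, a cochain $\phi$ is boundedly supported when only $\spp{\phi}$ --- the intersection of the diagonal $\gD$ with the \emph{closure} of the support --- is bounded; the support itself may be unbounded and may stay far from the diagonal. Under that definition every coarse cochain is boundedly supported (any diagonal point in the closure of $\abs{\phi}$ is a limit of small-diameter simplices of $\abs{\phi}$, which lie in the bounded set $\abs{\phi}\cap N_1(\gD)$), so the inclusion runs $\Cx(X)\hookrightarrow\Cb(X)$, opposite to yours, and the task is to show that the quotient $\Cb(X)/\Cx(X)$ is acyclic. Under your reading (the support of $\phi$ is a bounded subset of $X^{n+1}$) the theorem is simply false: the cone at a basepoint $b$, given by $(h\phi)(x_0,\dots,x_{n-1})=\phi(b,x_0,\dots,x_{n-1})$, preserves bounded supports and satisfies $dh+hd=\mathrm{id}$, so your $\Hb$ vanishes in all positive degrees, whereas $\Hx[n](\rr^n;\zz)=\zz$. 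Consequently acyclicity of your quotient $Q=\Cx/\Cb$ would force $\Hx$ to vanish in positive degrees, and no contracting homotopy on $Q$ of the kind you describe can exist.

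Even transplanted to the correct complexes, your one-step prism homotopy misses what the paper identifies as the real difficulty. Membership in $\Cb$ is a \emph{local} condition near the diagonal (it involves the closure of the support), so the chain-level operator dual to your $h$ must send simplices of small diameter far from $b$ to chains of correspondingly small diameter, or else $D^*$ fails to preserve $\Cb$. Uniform contractibility at infinity gives no such local control, and small fillings need not exist inside $X$ at all when $X$ is not locally acyclic; Remark~\ref{a curious example} (disjoint Warsaw circles) shows this obstruction is genuine, and the paper stresses that any proof must distinguish uniform contractibility from uniform acyclicity. The machinery you skip is exactly what handles this: Kuratowski-embed $X$ in $\ell^\infty$, extend $\phi$ to a neighborhood $N(X)$ via Spanier's tautness lemma (Proposition~\ref{proj map and nbd}), fill large simplices inside $X$ by inductively and coherently chosen contractions --- coherence across faces is what makes $f$ and $D$ in Lemma~\ref{thin filling} chain maps, and it forces dimension-dependent controls $\mu_n,\rho_n$ because the sets being contracted grow with dimension, so your single pair $(\mu,\rho)$ with a pointwise ``Borel choice'' of contractions yields neither $\delta h+h\delta=\mathrm{id}$ nor the growth bookkeeping --- and fill small simplices by convex chains in $N(X)$, glued to the interior fillings by the cone operators $\bar{H}$ (Lemma~\ref{thin filling 2}), before subdividing and dualizing as in the proof of Theorem~\ref{main theorem}. (A minor point: in this paper cochains are arbitrary functions on $X^{n+1}$; no Borel or local-boundedness conditions appear.)
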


We will define $\Hb[*](X)$ later (Definition \ref{bddly supported}) in the paper.
 In particular, when $X$ is proper and contractible, $\Hb(X)$ coincides with the compactly supported Alexander--Spanier cohomology  (Example~\ref{Hb=Hc}).
 Hence, Theorem~\ref{pmain theorem} generalizes Theorem~\ref{Roe theorem}.
In general, $\Hb(X)$ is isomorphic to the reduced Alexander--Spanier cohomology of $X$ at infinity with degree shifted down by one (Proposition \ref{p:bdd=singular}).
As a consequence of Theorem~\ref{pmain theorem}, we prove the following.
\begin{Corollary}\label{application 2}
Suppose $X$ is uniformly contractible at infinity.  
Let $A\subset X$ so that $X\neq N_r(A)$ for any $r$. Then $\Hx(X-A)=\varinjlim \Hr[*-1](X-N_r(A))$ for $*\geq 1$.
\end{Corollary}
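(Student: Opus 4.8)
The plan is to funnel the whole computation through the single auxiliary space $(X/\Bar{A},d_A)$ and then invoke the machinery already in place. By Proposition~\ref{model for coarse complement} we have $\Hx(X-A)\cong\Hx(X/\Bar{A})$, so it suffices to compute the coarse cohomology of $X/\Bar{A}$. Provided $(X/\Bar{A},d_A)$ is uniformly contractible at infinity, Theorem~\ref{pmain theorem} gives $\Hx(X/\Bar{A})\cong\Hb[*](X/\Bar{A})$, and Proposition~\ref{p:bdd=singular} rewrites $\Hb[*](X/\Bar{A})$ as the reduced Alexander--Spanier cohomology of $X/\Bar{A}$ at infinity, with the degree lowered by one. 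The remaining work is to recognize that cohomology at infinity as $\varinjlim_{r}\Hr[*-1](X-N_r(A))$.

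The heart of the argument, and the step where the hypothesis is really used, is verifying that $(X/\Bar{A},d_A)$ is uniformly contractible at infinity; I would take the collapsed point $\Bar{A}$ as its basepoint. Since $d_A(\Bar{A},x)=d(x,A)$, a subset of $X/\Bar{A}$ lies far from $\Bar{A}$ exactly when it lies far from $A$ in $X$. First I would note that uniform contractibility at infinity is independent of the chosen basepoint---moving $b$ changes $\mu$ only by an additive constant---so $X$ enjoys the property with respect to some $a\in A$, and then $d(B,a)\geq d(B,A)$ shows that any set far from $A$ is automatically far from $a$. Now let $B\subset X/\Bar{A}$ have small $d_A$-diameter $r$ and satisfy $d(B,A)\geq\mu'(r)$, for a control function $\mu'$ fixed below. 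Because $B$ sits far from $A$, the sum $d(x,A)+d(y,A)$ dominates the small $d_A$-diameter, so $d_A(x,y)=d(x,y)$ for all $x,y\in B$; thus $B$ is a genuine $d$-set of diameter $r$ lying far from $a$, and uniform contractibility at infinity of $X$ contracts it inside a set $C$ with $\diam_d C\leq\rho(r)$. Taking $\mu'(r)=\mu(r)+2\rho(r)$ keeps $C$ far enough from $A$ that $d_A$ and $d$ still agree on $C$, so the contraction descends to $X/\Bar{A}$ and happens inside a set of $d_A$-diameter at most $\rho(r)$. This exhibits $\rho$ and $\mu'$ as control functions for $X/\Bar{A}$.

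It remains to identify the cohomology at infinity. The relation $d_A(\Bar{A},x)=d(x,A)$ identifies $N_r(A)$ with the metric ball of radius $r$ about $\Bar{A}$, so its complement in $X/\Bar{A}$ is exactly $X-N_r(A)$; the balls $N_r(A)$ are cofinal among the bounded neighborhoods of $\Bar{A}$, and the hypothesis $X\neq N_r(A)$ for every $r$ guarantees that each complement is nonempty, so $X/\Bar{A}$ genuinely has points at infinity. Away from $A$ the metrics $d_A$ and $d$ agree locally and hence induce the same topology on each $X-N_r(A)$, leaving its reduced Alexander--Spanier cohomology unchanged. Consequently the reduced Alexander--Spanier cohomology of $X/\Bar{A}$ at infinity is $\varinjlim_{r}\Hr[*-1](X-N_r(A))$, the limit taken along the restriction maps induced by the inclusions $X-N_{r'}(A)\hookrightarrow X-N_r(A)$ for $r\leq r'$. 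Combining this with the isomorphisms of the first paragraph gives $\Hx(X-A)\cong\varinjlim_{r}\Hr[*-1](X-N_r(A))$; the restriction to $*\geq1$ is precisely what keeps the shifted degree $*-1$ nonnegative, so that Proposition~\ref{p:bdd=singular} applies in the form stated.
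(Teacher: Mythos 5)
Your proposal is correct and follows the paper's own pipeline: Proposition~\ref{model for coarse complement} to trade $\Hx(X-A)$ for $\Hx(X/\Bar{A})$, Theorem~\ref{pmain theorem} to pass to $\Hb(X/\Bar{A})$, and Proposition~\ref{p:bdd=singular} with basepoint $\Bar{A}$, using $d_A(\Bar{A},x)=d(x,A)$ to identify $X/\Bar{A}-N_r(\Bar{A})$ with $X-N_r(A)$ --- exactly the argument of Theorem~\ref{t:coarse complement}. The one place you genuinely diverge, and in fact improve on the exposition, is the transfer of the hypothesis: the paper routes this through the auxiliary notion ``uniformly contractible away from $A$'' together with the observation that balls $B_r(x)$ with $d(x,A)>2r$ are isometric in $(X,d)$ and $(X/\Bar{A},d_A)$, but it never spells out why uniform contractibility at infinity of $X$ (the hypothesis actually stated in the corollary) implies that auxiliary property. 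Your direct verification supplies precisely this missing step: relocating the basepoint into $A$ at the cost of an additive constant in $\mu$, using $d(B,a)\geq d(B,A)$ for $a\in A$, checking that $d_A=d$ on sets far from $A$, and padding $\mu'=\mu+2\rho$ so the contracting set $C$ stays in the region where the $1$-Lipschitz quotient map does not shrink the picture below control, whence the contraction descends with $d_A$-diameter at most $\rho(r)$. Two small points worth a word: you implicitly assume $\rho(r)\geq r$ when arguing $2\mu'(r)>r$ (harmless, since the contracting set contains $B$), and one should note that the $d$- and $d_A$-topologies agree on each $X-N_r(A)$ --- which you do --- so the Alexander--Spanier groups in Proposition~\ref{p:bdd=singular} are those of $X-N_r(A)$ with its original topology.
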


The hypothesis of Theorem~\ref{pmain theorem} is much weaker than the hypothesis of Roe's Theorem~\ref{Roe theorem}.
In particular, the property of uniform contractibility at infinity is  more robust compared to uniform contractibility.
For example, deleting a bounded set does not affect the property of being uniformly contractible at infinity.
This is not true for uniformly contractible spaces because such spaces are necessarily contractible and complement of bounded set in a contractible set might not be contractible anymore.
On the other hand, spaces satisfying uniform contractibility at infinity can be very far from being contractible (see Figure~\ref{unif at inf eg}).

\begin{figure}\label{unif at inf eg}
    \centering
    \includegraphics[scale=.7]{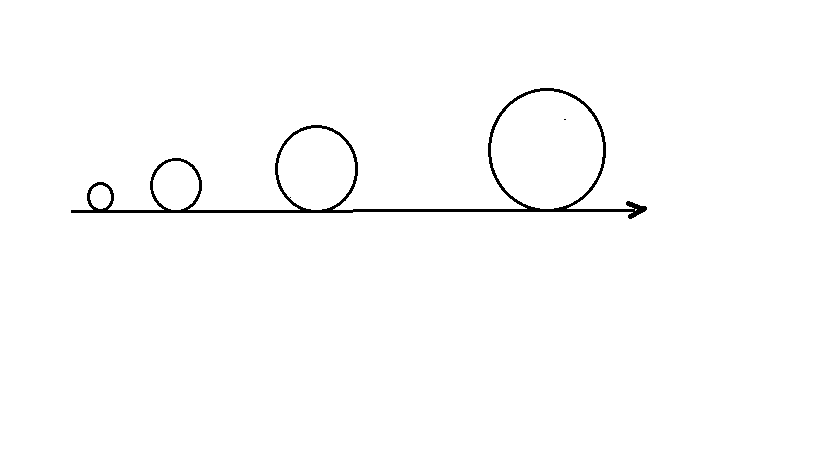}
    \caption{ A  subspace of $\rr^2$ that consists of countable union of circles $\{C_i\}_{i\in\nn}$ and the ray $r:=[0,\infty)\times \{0\}$ such that the $i^{th}$ circle has radius $i$ and distance between two consecutive circle grows to infinity. This is an example of a space that is uniformly contractible at infinity.}
    \label{fig:pack of circles}
\end{figure}

      Our approach to prove Theorem \ref{pmain theorem} is different from the proof of Roe's Theorem~\ref{Roe theorem} in~\cite{r93}.
  The main part of the proof of Theorem~\ref{Roe theorem} in~\cite{r93} involve showing that coarse cohomology is isomorphic to the $\breve{\text{C}}$ech cohomology of certain open covers. In order to show that Roe relates this $\breve{\text{C}}$ech cohomology to certain presheaf cohomology using double complex.
   Our approach to prove Theorem \ref{pmain theorem} follows an idea from \cite{r03} which is more geometric and direct in the sense that we explicitly construct (co)chain homotopy to establish the isomorphism between the concerned cohomology groups.

  A space $X$ is called $\emph{ uniformly acyclic at infinity}$ if there exist a functions $ \rho:[0,\infty) \rightarrow [0,\infty)$ and a basepoint $b\in X$ such that any set $B$ of diameter $r$, the inclusion $B\hookrightarrow N_{\rho(r)}(B)$ induces trivial map between the singular homology.    Another interesting aspect of the Theorem~\ref{pmain theorem} is that it does not hold if we replace uniform contractibility at infinity by uniform acyclicity.
That means any proof of this theorem needed to be able to distinguish between uniform acyclicity and uniform contractibility. This makes the proof more subtle than one might initially expect it to be.

\begin{overview}
In section~\ref{preliminaries}, we set some terminologies for various cochain complexes.
In particular, we define boundedly supported cohomology $\Hb(X)$ and recall coarse cohomology $\Hx(X)$.
In section~\ref{warm up theorem}, we prove a warm-up theorem that says $\Hx(X)$ and $\Hb(X)$ are isomorphic when $X$ is uniformly acyclic at infinity and locally acyclic at infinity.
This proof gives the main ideas that will go into the proof of the Theorem~\ref{pmain theorem}.
Section~\ref{main proof} contains the proof of the Theorem~\ref{pmain theorem}.
Finally in section \ref{coarse and AS}, we prove Corollary \ref{application 2}.
\end{overview}
\begin{Acknowledgement}
Part of this paper was written when the author was a graduate student in University of Wisconsin--Milwaukee. The author would like to thank his advisor Boris Okun for his insights and advice during the project.
In particular, the author is indebted to him for sharing his idea on the proof of Theorem~\ref{easier theorem} from which this whole project originated.
The author would also like to thank Kevin Schreve for helpful conversations.
\end{Acknowledgement}

\section{Preliminaries}\label{preliminaries}

We introduce some notation for various cochain complexes and corresponding cohomology groups associated to $X$.
Let $R$ be a ring.
The  basic complex is the complex  of all cochains with $d:\C[n-1](X;R)\rightarrow \C[n](X;R)$ being the boundary maps as follows:

\begin{align*} 
&\C[n](X;R)=\{\phi: X^{n+1} \to R\}\\
&(d\phi)(x_0,\dots,x_n)=\sum_{i=0}^n(-1)^i\phi(x_0,\dots,\hat{x_i},\dots,x_n)
\end{align*}
It is an acyclic complex.

We will refer to points in $X^{n+1}$ as \emph{$n$-simplices}.
We will refer to a continuous map $f:\Delta^n\rightarrow X$ as a \emph{singular $n$-simplex}.
We will often need to measure distances between simplices of different dimensions.
A convenient way to do this is to stabilize simplices by repeating the last coordinate, as follows.
Let $X^\infty$ denotes the set of all eventually constant sequence in $X$ equipped with the sup metric.
Let $i:X^{n+1} \to X^{\infty} $ denote the map $ (x_{0},\dots,x_{n}) \mapsto (x_{0},\dots,x_{n}, x_{n},x_{n},\dots)$.
For a cochain $\phi$ define its stabilized support $\Supp{\phi} = \{i(\gs) \mid \gs \in X^{n+1} \text{ and } \phi(\gs) \neq 0\} \subset X^{\infty}$.

  Let $\gD=i(X)$ denote the diagonal of $X^{\infty}$. 
Let $||\phi||$ be the intersection of the diagonal $\gD=\{(x,x,\ldots, x)\mid x\in X\}\subset X^{*+1}$ and the closure of the support of the function $\phi:X^{*+1}\rightarrow R$.

\subsection{Alexander--Spanier cohomology}
Let $\Cz(X;R)$ be the complex of locally zero cochains:
\[
\Cz(X;R) = \{\phi \in \C(X;R) \mid \spp{\phi} =\emptyset \}.
\]
 Note that restriction of $d$ gives a well defined map $\C_0(X)\rightarrow \C[*+1]_0(X)$. 
 Consequently, $d$ induces a well defined map $\Cas(X;M)\rightarrow \Cas[*+1](X;R)$, where
 \[\Cas(X;R)=\C(X;R)/\Cz(X;R).\] 
Alexander--Spanier cohomology, denoted by $\H(X;M)$, is the cohomology of the complex $(\Cas(X;M),d)$.

\begin{Example}
    Alexander--Spanier cohomology groups coincide with the singular cohomology groups for locally finite complexes. On the other hand, they may be different for the spaces that are not locally connected. For instance, degree 1 Alexander--Spanier cohomology group  of the Warsaw circle with coefficient in $\zz$ is $\zz$ whereas degree 1 singular cohomology group is trivial.
\end{Example}

\subsection{Compactly supported Alexander--Spanier cohomology} 
Compactly supported cochains are the cochains from the following complex
\[\Cc(X;R) = \{\phi \in \C(X;R) \mid \spp{\phi} \text{ is compact} \}.
\]
Compactly supported Alexander--Spanier cohomology $\Hc(X;R)$ is the cohomology of the following cochain complex 
\[\Ccas(X;R)=\Cc(X;R)/\Cz(X;R).
\]
with the usual boundary operator induced by $d$.

 \subsection{Boundedly supported cohomology}\label{bddly supported}
 Now we define a natural generalization of compactly supported cochains, called boundedly supported cochains:
\[\Cb(X;R):=\{\phi\in \C(X;R)\mid ||\phi|| \text{ is bounded}\}
\] 
 Boundedly supported cohomology  $\Hb(X;R)$ is the cohomology of $\Cb(X;R)$ with the usual boundary operator $d$.

\begin{Example}\label{Hb=Hc} If $X$ is proper and contractible, then
 $\Hb(X;R)=\Hc(X;R)$. 
 Indeed, if $X$ is contractible, then $\Cz(X;R)$ is an acyclic complex. In this case, the sequence
\[
	0 \to \Cz \to \C \to \Ccas \to 0
\]
gives $\Hc(X;R)=\H(\Cc(X;R))$. Moreover, if $X$ is proper, then $\Cc(X;R)=\Cb(X;R)$. 
Hence we get $\Hc(X;R)=\H(\Cb(X;R))=\Hb(X;R)$.
For example, 
\begin{align*}
   \Hb(\rr^n;\zz)=\Hc(\rr^n;\zz)=\begin{cases} \zz & \text{if $*=n$}\\
     0 & \text{otherwise}
     \end{cases}  
 \end{align*}
\end{Example}
 
 In general, the boundedly supported cohomology is the same as the (reduced) Alexander--Spanier cohomology at infinity with degree shifted down by one.
 More precisely we have the following.
 \begin{Prop}\label{p:bdd=singular}
\begin{enumerate}
\item\label{i:bdd} If $X$ is bounded, then 
\[\Hb(X; R)=\begin{cases} R & \text{if  $*=0$}\\   0 & \text{otherwise}
    \end{cases}\]

    \item \label{i:unbdd}If $X$ is unbounded and $b\in X$, then \[\Hb(X; R)=\begin{cases} 0 & \text{if  $*=0$}\\   \varinjlim \Hr[*-1](X-N_r(b); R) & \text{otherwise}
    \end{cases}\]
    
\end{enumerate}
\end{Prop}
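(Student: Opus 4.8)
The plan is to read off both statements from the short exact sequence of cochain complexes
\[
 0 \to \Cb(X;R) \to \C(X;R) \to \C(X;R)/\Cb(X;R) \to 0,
\]
combined with the acyclicity of the basic complex, so that $\H[0](\C)=R$ (generated by the constant cochains) and $\H[n](\C)=0$ for $n\ge 1$. When $X$ is bounded the diagonal $\gD$ is bounded, hence $\spp{\phi}\subset\gD$ is automatically bounded for every cochain; thus $\Cb=\C$ and $\Hb(X;R)=\H[*](\C(X;R))$, which gives \eqref{i:bdd} at once. So the work is entirely in the unbounded case.

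For \eqref{i:unbdd} the heart of the matter is to identify the quotient complex. I claim that restricting a cochain to the complement $X-N_r(b)$ and then passing to the Alexander--Spanier quotient assembles into an isomorphism of complexes
\[
 \C(X;R)/\Cb(X;R)\ \xrightarrow{\ \cong\ }\ \varinjlim_r \Cas(X-N_r(b);R).
\]
These restrictions are chain maps compatible with the directed system of complements (ordered by increasing $r$), so they induce a single map $\rho$ out of $\C(X;R)$. Surjectivity holds by extension by zero: any cochain on $X-N_r(b)$ extends to $X$ by setting it equal to zero on every simplex meeting $N_r(b)$. The real content is the kernel computation $\ker\rho=\Cb$: a cochain $\phi$ restricts to a locally zero cochain on some $X-N_s(b)$ precisely when $\spp{\phi}$ is bounded. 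Both inclusions hinge on the fact that membership in $\spp{\phi}$ is detected by arbitrarily small simplices clustering on the diagonal, so that a diagonal point $x$ with $d(b,x)>s$ lies in $\spp{\phi}$ if and only if it survives in the support of the restriction $\phi|_{X-N_s(b)}$.

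Granting this identification, exactness of filtered colimits yields $\H[*](\C/\Cb)\cong\varinjlim_r\H[*](X-N_r(b);R)$, and the long exact sequence of the first displayed sequence collapses because $\C$ is acyclic in positive degrees. For $*\ge 2$ it gives $\Hb[*](X;R)\cong\varinjlim_r\H[*-1](X-N_r(b);R)$, and in these positive degrees reduced and unreduced Alexander--Spanier cohomology agree. The remaining degrees come from the tail
\[
 0\to \Hb[0](X;R)\to \H[0](\C)\xrightarrow{\ \alpha\ } \H[0](\C/\Cb)\to \Hb[1](X;R)\to 0,
\]
in which $\H[0](\C)=R$ is the constant cochain and $\alpha$ sends it to the class of the constant function $1$ in each $\H[0](X-N_r(b);R)$. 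Since $X$ is unbounded every complement is nonempty, so $\alpha$ is injective and $\Hb[0](X;R)=0$; and its cokernel is exactly $\varinjlim_r\Hr[0](X-N_r(b);R)$, because reduced degree-zero Alexander--Spanier cohomology is the quotient of $\H[0]$ by the constants. This supplies the $*=1$ case in reduced form and completes \eqref{i:unbdd}.

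I expect the kernel computation $\ker\rho=\Cb$ to be the main obstacle, since it is exactly where the distinction between \emph{boundedly supported} and the naive notion of \emph{supported in a bounded set} must be confronted: the condition that $\spp{\phi}$ be bounded constrains only the small simplices near the diagonal, not all simplices far from $b$, so its equivalence with local triviality at infinity has to be argued through the closure-of-support description and the clustering of small simplices on the diagonal. The bookkeeping that converts the augmentation term $\H[0](\C)=R$ into reduced cohomology in degrees $0$ and $1$ is the other delicate point, though it becomes routine once the colimit description of $\C/\Cb$ is in hand.
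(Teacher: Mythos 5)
Your proposal is correct and takes essentially the same route as the paper: both rest on the short exact sequence $0\to\Cb(X;R)\to\C(X;R)\to\varinjlim_r\Cas(X-N_r(b);R)\to 0$ (your quotient $\C/\Cb$ is exactly this colimit, and your kernel computation $\ker\rho=\Cb$ is the paper's verification that $\ker(i)=\im(j)$), followed by the long exact sequence and acyclicity of the basic complex. The only cosmetic difference is low-degree bookkeeping: you extract $\Hb[0]=0$ and the $*=1$ case from the tail of the unreduced long exact sequence via the augmentation map $\alpha$, while the paper computes $\Hb[0](X;R)=0$ directly (a nonzero constant cocycle has unbounded $\spp{\phi}$ on an unbounded space) and uses reduced cohomology to cover all $*\geq 1$ at once.
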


\begin{proof}
    \eqref{i:bdd} If $X$ is bounded  then $\Cb(X;R)=\C(X;R)$.
    The cohomology of the latter complex is trivial except in degree 0. 
    Hence, $\Hb(X;R)=0$ for $*\geq 1$.
    $\Hb[0](X;R)=\{\text{constant functions on $X$}\}\cong R$.

\eqref{i:unbdd}  Elements in $\Hb[0](X;R)$ consists of constant functions  defined on $X$ with support contained in a neighborhood of $b$. This means, if $X$ is not bounded, then $\Hb[0](X;R)$ is trivial. Hence in this case $\Hb[0](X;R)=0$.

Consider the following maps between the cochain complexes where $j$ is the inclusion map and $i$ is induced by  canonical restriction maps (followed by quotient maps) $i_r:\C(X;R)\rightarrow \C(X-N_r(b);R) \rightarrow  \Cas(X-N_r(b);R)$.
\[
0\rightarrow \Cb(X;R) \xrightarrow{j} \C(X;R) \xrightarrow{i} \varinjlim \Cas(X- N_r(b);R) \rightarrow 0 
\]
The above is a short exact sequence because of the following 
\begin{align*}
  \ker(i)&=\{\phi\in \C(X;R)\mid \phi \in \Cz( X- N_r(A);R) \text{ for some r}\}\\
&=\{\phi\in \C(X;R)\mid ||\phi|| \text{ is bounded} \}\\&= \im(j)  
\end{align*}
The above short exact sequence of cochain complexes induces a long exact sequence of the corresponding reduced cohomology groups.
The reduced cohomology group of the middle complex is trivial in all degrees.
Hence, the long exact sequence implies that  \[\Hb(X;R) \cong \varinjlim \Hr[*-1](X-N_r(b);R) \text{ for } *\geq 1
\]
\end{proof}
\begin{Example}\label{Hb of pack of circle}
Suppose $X$ is the space appearing in figure \ref{fig:pack of circles}. 
Since $X$ is unbounded, $\Hb[0](X)$ is trivial.
Suppose $b\in X$. For $*\geq 1$, we have the following
\begin{align*}
    \Hb(X,\zz)&=\varinjlim \Hr[*-1](X-N_r(b);\zz)\\
    &=\begin{cases}
         \Pi_{i=0}^\infty\zz/\oplus_{i=0}^\infty\zz \quad &*=2\\
         0 \quad &\text{otherwise}
    \end{cases}
\end{align*}

\end{Example}

We finish this section by briefly recalling the  coarse cohomology.
\subsection{Coarse cohomology}
Coarse cohomology of a space $X$ is the cohomology of the following cochain complex

\[\Cx(X;R):=\{\phi \in \C(X;R) \mid |\phi|\cap N_r(\gD)  \text{ is bounded} \text{ for all }r\}
\]

\begin{Example}
According to Theorem \ref{Roe theorem}, coarse cohomology is isomorphic to compactly supported Alexander--Spanier cohomology for uniformly contractible spaces. In particular, coarse cohomology of the universal cover of any compact aspherical space is isomorphic to its compactly supported Alexander--Spanier cohomology. For example, $\Hx(\rr^n;\zz)=\zz$ for $*=n$ and is trivial otherwise.
\end{Example}

\section{A warm-up theorem}\label{warm up theorem}

A space is called $\emph{locally acyclic at infinity}$ with coefficient in $R$ if complement of some bounded subset in the space  is locally acyclic with coefficients in $R$.
 A space $X$ is $\emph{uniformly}$ $\emph{acyclic at infinity}$ with coefficients in $R$ if there exist two non-decreasing control functions $\mu, \rho:[0,\infty) \rightarrow [0,\infty)$ and a basepoint $b\in X$ such that any set $B$ of diameter $r$ and $d(b,B)\geq \mu(r)$, the inclusion $B\hookrightarrow N_{\rho(r)}(B)$ induces trivial map between the singular homology with coefficients in $R$.
Sometimes, we will refer to such space as $(\mu,\rho)$-uniformly acyclic at infinity.

In this section we prove the following Theorem. 

\begin{Theorem}\label{easier theorem}
	If $X$ is uniformly acyclic at infinity and locally acyclic at infinity with coefficients in $R$, then the inclusion $\Cx(X;R) \hookrightarrow \Cb(X;R)$ induces an isomorphism on cohomology:
	\[
		\Hx(X;R) \cong \Hb(X;R).
	\]
\end{Theorem}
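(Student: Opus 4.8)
The plan is to deduce the isomorphism from the acyclicity of the quotient complex $\Cb(X;R)/\Cx(X;R)$. The inclusion $\Cx(X;R)\hookrightarrow\Cb(X;R)$ is a map of subcomplexes of $\C(X;R)$: if $\Supp{\phi}\cap N_r(\gD)$ is bounded for every $r$, then $\spp{\phi}$ is bounded as well, being contained in the closure of $\Supp{\phi}\cap N_1(\gD)$. Hence there is a short exact sequence $0\to\Cx(X;R)\to\Cb(X;R)\to\Cb(X;R)/\Cx(X;R)\to 0$, and its long exact sequence shows that the inclusion is an isomorphism on cohomology as soon as the quotient is acyclic. So the whole argument reduces to contracting $\Cb(X;R)/\Cx(X;R)$, and equivalently it suffices to show that every cocycle of this quotient bounds.

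Before constructing the contraction I would unwind what the quotient records. In degree $0$ one checks directly that $\Cx[0]=\Cb[0]$ (both consist of the functions with bounded support, since a $0$-cochain has $\Supp{\phi}\subset\gD$), so the quotient vanishes there and it suffices to produce primitives in degrees $\geq 1$. In positive degrees a boundedly supported cochain fails to be coarse exactly when, at some scale $r$, the part of its support lying in $N_r(\gD)$—the simplices of diameter roughly $r$—escapes to infinity, while any cochain whose support stays in a bounded region is coarse and hence dies in the quotient. Thus $\Cb/\Cx$ sees only the support at infinity, filtered by diameter, which is precisely the regime addressed by the two hypotheses: both are imposed only outside a bounded neighbourhood of the basepoint $b$ and only up to enlarging a set to a $\rho$-neighbourhood of itself.

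Then I would build the contracting homotopy by \emph{filling}, following the geometric idea of~\cite{r03}. Given a cocycle of degree $n\geq 1$, represented by $\phi\in\Cb[n]$ with $d\phi\in\Cx[n+1]$, I want a primitive $\psi\in\Cb[n-1]$ with $\phi-d\psi\in\Cx[n]$. The simplices carrying the non-coarse part of $\phi$ lie far from $b$ and have bounded diameter, so for each such configuration $B$ of diameter $r$—which satisfies $d(b,B)\geq\mu(r)$ once we are far enough out—uniform acyclicity at infinity lets me cap off the cycles assembled from the support of $\phi$ inside $N_{\rho(r)}(B)$. Assembling these fillings degree by degree produces $\psi$; because $\rho$ enlarges diameters only by a fixed control function and $\mu$ keeps the construction at infinity, $\psi$ again has $\spp{\psi}$ bounded (so $\psi\in\Cb$), while $\phi-d\psi$ has been cleared of its support at infinity near the diagonal and therefore lands in $\Cx$. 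Carrying this out inductively on the degree, and absorbing the bounded exceptional region where the hypotheses do not apply into $\Cx$, contracts the quotient.

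The step I expect to be the main obstacle is exactly the passage from fillings to a cochain homotopy. Uniform acyclicity at infinity is a statement about \emph{singular} homology, whereas $\C$, $\Cb$, and $\Cx$ are the combinatorial Alexander--Spanier--type complexes; to apply a singular filling to an honest combinatorial cochain I must pass through the comparison between the two, which is a local quasi-isomorphism only where the space is locally acyclic. This is the role of the hypothesis of local acyclicity at infinity: outside a bounded set it makes the locally zero cochains $\Cz$ locally fillable, so that the locally produced pieces of the homotopy glue and the singular fillings can be transported to the combinatorial complex. Without it the two pictures cannot be bridged, which is the concrete reason—flagged after the statement of Theorem~\ref{pmain theorem}—that mere uniform acyclicity does not suffice, and that the sharper argument for uniformly contractible-at-infinity spaces must exploit contractibility rather than acyclicity.
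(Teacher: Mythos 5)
Your reduction is the paper's: the short exact sequence $0 \to \Cx(X) \to \Cb(X) \to \Cb(X)/\Cx(X) \to 0$ and the task of producing, for $\phi \in \Cb[n](X)$ with $d\phi \in \Cx[n+1](X)$, a $\psi \in \Cb[n-1](X)$ with $\phi - d\psi \in \Cx[n](X)$; and your plan to do this by Roe-style controlled fillings is also the paper's. But the step you leave at the level of intention --- ``assembling these fillings degree by degree produces $\psi$'' --- is exactly where the content lies, and the control you retain is not sufficient for it. In the paper the engine is an operator calculus: far-away simplices are filled by singular chains (Lemma~\ref{l:filling}), these are barycentrically subdivided until supported by a cover $\cU$ \emph{built from $\phi$ itself} --- a bounded neighborhood $U$ of $\spp{\phi}$ together with sets $U_x$ satisfying $U_x^{n+1} \cap \Supp{\phi} = \emptyset$ --- giving $S:\cff(X) \to \csu(X)$; the vertex map $V$ returns to the combinatorial complex, Lemma~\ref{l:close maps are homotopic} supplies a controlled homotopy $G$ between $VS$ and the inclusion, and after extending $G$ by zero to $D$ one sets $T^{*} = id - D^{*}d - dD^{*}$ and takes $\psi = -D^{*}(\phi)$. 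Your sketch keeps only the metric control $\Supp{M(\gs)} \subset N_{\rho_n(\diam \gs)}(\gs)$, and that alone does not make $T^{*}\phi$ coarse: a filling of a far-away simplex stays near the simplex, but its constituent simplices can be large and can still meet $\Supp{\phi}$, which is typically unbounded off the diagonal even for $\phi \in \Cb(X)$. The vanishing $\phi(T(\gs)) = 0$ for far-away $\gs$ of bounded diameter comes precisely from the subdivision relative to the $\phi$-adapted cover, so that every simplex of $VS(\gs)$ lies in some $U_x^{n+1}$; this cover-and-subdivision step is absent from your proposal, and without it the claim that $\phi - d\psi$ ``has been cleared of its support at infinity'' does not follow.

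The role you assign to local acyclicity at infinity is also not the one it plays. The paper invokes no comparison or local quasi-isomorphism between singular and Alexander--Spanier theories, and nothing about local fillability of $\Cz(X)$: the passage from singular chains back to the combinatorial complex is the hypothesis-free forgetful map $V$. Local acyclicity at infinity enters at one specific point: it lets the filling of a \emph{small} simplex far from the basepoint be chosen \emph{small} (property (3) of Lemma~\ref{l:filling}, inherited by $G$ in Proposition~\ref{G and S}), and this smallness is exactly what proves the candidate primitive is boundedly supported --- for $x$ outside a bounded set $B$ and outside $\spp{\phi}$ one finds a neighborhood $W$ of $x$ with $\Supp{D(\gs)} \cap \Supp{\phi} = \emptyset$ for all $\gs \in W^{n+1}$, whence $\spp{D^{*}\phi} \subset \spp{\phi} \cup B$. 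Your assertion that $\spp{\psi}$ is bounded ``because $\mu$ keeps the construction at infinity'' is not justified by the control functions alone: with only uniform acyclicity at infinity the fillings of small simplices can have definite size (the Warsaw-circle phenomenon of Remark~\ref{a curious example}), and $D^{*}\phi$ can then fail to lie in $\Cb(X)$. So the two load-bearing verifications --- coarseness of $T^{*}\phi$ via the cover-adapted subdivision, and bounded support of $D^{*}\phi$ via smallness of fillings --- are respectively missing and misattributed in your proposal, even though its overall architecture agrees with the paper's.
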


For a uniformly acyclic at infinity space, we can perform a version of the standard ``connect the dots'' construction and the proof of the above theorem relies on that.
Suppose $X$ is $(\mu,\rho)$-uniformly contractible at infinity.
 Every  $1$-simplex $\gs$  of diameter $r$  outside of the ball of radius $\mu(r)$ is fillable, so we can pick a singular $1$-chain $f(\gs)$, such that $\Supp{f(\gs)}\subset N_{\rho(r)}(\gs)$ and $\d f(\gs) = \d \gs$.  
 Proceeding by induction on the dimension,  if a simplex $\gs$ is sufficiently far from the base point, its boundary is already filled by a singular cycle that bounds a singular chain $f(\gs)$ contained in a controlled neighborhood of the simplex. 
 Moreover, if the space is locally acyclic at infinity, we can choose $f(\gs)$ to have small diameter whenever diameter of $\gs$ is small and is outside some bounded set.
 In order to do that we can take the chain $f(\gs)$ to be of diameter $2k(\gs)$ where
 \[
 k(\gs):=\inf\{\diam(|c|)\mid \d c=f(\d\gs)\}.
 \]
 Here we are multiplying by $2$ to ensure  existence of such chain and that whenever $X$ is locally acyclic at $x$, for any open neighborhood $U$ of $x$, there exist another neighborhood $V\subset U$ of $x$, such that $|f(\gs)|\subset U$ for all $\gs\in V^{n+1}$.
 
Note that not every simplex is fillable, and that  the diameter of fillings grow with dimension, as well as the size of the balls that we have to avoid.
To formalize  the notion of sufficiently far, we make the following definition. 
Given an increasing  sequence of increasing control functions $\mu_{n}:[0,\infty)\rightarrow [0,\infty)$, denote
\[
\cff[n](X;R)=\langle \gs^{n} \mid d(\gs^{n},b) \geq  \mu_{n}(\diam \gs^{n}) \rangle \subset \cf[n](X;R)
\]
Since $\mu_{n}$ is increasing, this defines a subcomplex of the chain complex of finitely supported chains.
 Let $V:\cs(X;R) \to \cf(X;R)$ denote the forgetful map, which maps a singular simplex to its vertices.
 For the rest of the section we will suppress the coefficient $R$ from the notation.
 The previous discussion then gives us the following.

\begin{Lemma}\label{l:filling}
    Suppose $X$ is uniformly acyclic at infinity and is locally acyclic at infinity. Then there exist two non-decreasing sequence of control functions $\rho_{n},\mu_n:[0,\infty)\rightarrow [0,\infty)$ and 
 a map $M: \cff(X) \to \cs(X)$ where 
 \[
\cff[n](X)=\langle \gs^{n} \mid d(\gs^{n},A) \geq  \mu_{n}(\diam \gs^{n}) \rangle \subset \cf[n](X)
\]
 such that
\begin{enumerate}
\item \label{i:chain map} $M$ is a chain map.
 \item \label{i:support control} $\Supp{M(\gs^{n})} \subset N_{\rho_{n}(\diam(\gs^n))}( \gs^{n})$.
 \item \label{i:small simplex} There is bounded set $B\subset X$ such that for any $x\notin B$, and a neighborhood $U$ of $x$, there is a neighborhood $W$ of $x$ such that $\Supp{M(\gs^n)}\subset U$ for all $\gs^n\in {W}^{n+1}$.
 \end{enumerate}
 \end{Lemma}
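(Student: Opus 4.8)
The plan is to construct $M$ by induction on the dimension $n$, defining it on each far-away generator $\gs^n$ and extending $R$-linearly, while simultaneously building the control functions $\mu_n,\rho_n$ so that the induction closes. Throughout, $\mu,\rho$ denote the functions supplied by uniform acyclicity at infinity (and $b$ the basepoint), whereas $\mu_n,\rho_n$ are the sequences being built. In dimension $0$ a generator is a point $x\in X$, and I set $M(x)$ to be the constant singular $0$-simplex at $x$; its support is $\{x\}$, so $\rho_0\equiv 0$ works and $M$ is trivially a chain map in this degree.

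For the inductive step, suppose $M$ and the functions $\mu_{n-1},\rho_{n-1}$ have been constructed in degrees $<n$. Given a generator $\gs^n$ with $d(\gs^n,b)\ge \mu_n(\diam\gs^n)$, the first point is to check that every face of $\gs^n$ is again a generator in degree $n-1$: a face has diameter at most $\diam\gs^n$ and is at distance at least $d(\gs^n,b)$ from $b$, so provided $\mu_n\ge\mu_{n-1}$ pointwise this forces $d(\gs^{n-1},b)\ge\mu_{n-1}(\diam\gs^{n-1})$. Consequently $M(\d\gs^n)$ is defined, and since $M$ is a chain map in lower degrees it is a singular cycle. By the inductive support estimate its support lies in $N_{\rho_{n-1}(\diam\gs^n)}(\gs^n)$, a set of diameter at most $r':=\diam\gs^n+2\rho_{n-1}(\diam\gs^n)$ lying at distance at least $d(\gs^n,b)-\rho_{n-1}(\diam\gs^n)$ from $b$. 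Choosing $\mu_n(t)\ge \mu(t+2\rho_{n-1}(t))+\rho_{n-1}(t)$ guarantees this cycle satisfies the hypotheses of uniform acyclicity at infinity, hence bounds a singular chain inside $N_{\rho(r')}(\Supp{M(\d\gs^n)})$. I then take $M(\gs^n)$ to be such a filling and set $\rho_n(t)=\rho_{n-1}(t)+\rho(t+2\rho_{n-1}(t))$, which gives \eqref{i:support control}; property \eqref{i:chain map} holds by construction since $\d M(\gs^n)=M(\d\gs^n)$, and the linear extension is legitimate because, $\mu_n$ being non-decreasing both in $t$ and in $n$, $\d$ maps $\cff[n]$ into $\cff[n-1]$.

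The extra ingredient needed for \eqref{i:small simplex} is to make the filling not merely controlled in neighborhood size but actually small when $\gs^n$ is small. To this end I would select $M(\gs^n)$ among all fillings of $M(\d\gs^n)$ to have diameter at most $2k(\gs^n)$, where $k(\gs^n)=\inf\{\diam(\Supp{c})\mid \d c=M(\d\gs^n)\}$, the factor $2$ ensuring that such a choice exists. Let $B$ be the bounded set whose complement is locally acyclic, and enlarge each $\mu_n(0)$ so that every generator lies outside $B$. For $x\notin B$ an inner induction on $n$ then yields \eqref{i:small simplex}: given a neighborhood $U\ni x$, local acyclicity provides $V\subset U$ such that cycles in $V$ bound in $U$, and the inductive small-simplex property for faces provides $W\subset V$ with $\Supp{M(\gs^{n-1})}\subset V$ for all $\gs^{n-1}\in W^{n}$; then for $\gs^n\in W^{n+1}$ the cycle $M(\d\gs^n)$ lies in $V$, so $k(\gs^n)$ is small and the near-minimal filling $M(\gs^n)$, whose support contains $\Supp{M(\d\gs^n)}$, stays inside $U$.

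The main obstacle is that a single choice of $M(\gs^n)$ must serve two a priori different purposes: the uniform-acyclicity estimate controls the large-scale size of the filling, namely its containment in a $\rho_n$-neighborhood of $\gs^n$, whereas \eqref{i:small simplex} is a small-scale statement that fails for arbitrary fillings because $\rho$ need not be small near $0$. The near-minimal-diameter selection is exactly the device that reconciles the two: it realizes the large-scale bound through uniform acyclicity and the small-scale bound through local acyclicity. Verifying that both bounds hold for the same choice, together with the bookkeeping that makes $\mu_n,\rho_n$ a non-decreasing sequence of non-decreasing functions for which $\d$ preserves the subcomplexes $\cff[n]$, is the technical heart of the argument.
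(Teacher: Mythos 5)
Your proposal is correct and takes essentially the same route as the paper, which proves this lemma only via the preceding ``connect the dots'' discussion: the same induction on dimension with $\mu_n$ inflated so that $\Supp{M(\d\gs^n)}$ satisfies the uniform-acyclicity-at-infinity hypothesis, and the identical device of selecting fillings of diameter at most $2k(\gs^n)$, $k(\gs^n)=\inf\{\diam(\Supp{c})\mid \d c=M(\d\gs^n)\}$, so that local acyclicity at infinity yields property~(3). The one small adjustment is that after switching to the near-minimal-diameter filling your stated $\rho_n(t)=\rho_{n-1}(t)+\rho(t+2\rho_{n-1}(t))$ must be re-chosen (the small chain is anchored only through $\Supp{M(\gs^n)}\supset\Supp{M(\d\gs^n)}$, giving e.g.\ $\rho_n(t)=\rho_{n-1}(t)+2\bigl(t+2\rho_{n-1}(t)+2\rho(t+2\rho_{n-1}(t))\bigr)$), which is exactly the bookkeeping you flag and which the paper likewise leaves implicit.
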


 \begin{Lemma}\label{l:close maps are homotopic}
    Assume that $X\subset Y$. Let $f:\cf(X)\rightarrow \cf(Y)$ be a chain map such that for any $n$-simplex $\gs^n$, $|f(\gs^n)|\subset N_{\rho_n(\diam(\gs^n))}(\gs^n)$ for some non-decreasing sequence of function $\rho_n:[0,\infty)\rightarrow [0,\infty)$. 
     Then $f$ and the inclusion map $i:\cf(X)\rightarrow \cf(Y)$ are homotopic via a chain homotopy $D$ such that $|D(\gs)|\subset N_{\rho_n(\diam(\gs^n))}(\gs^n)$.
 \end{Lemma}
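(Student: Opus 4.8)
The plan is to construct $D$ by induction on degree, exploiting the fact that the full simplicial chain complex $\cf(Y)$ --- the complex with basis $Y^{n+1}$ in degree $n$ and the alternating face boundary --- is acyclic in a way that respects supports. The tool is the coning operation: for a point $y\in Y$ and a basis simplex $\gt=(t_0,\dots,t_n)$ set $\operatorname{Cone}_y(\gt)=(y,t_0,\dots,t_n)$ and extend linearly. Then $\d\operatorname{Cone}_y(\gt)=\gt-\operatorname{Cone}_y(\d\gt)$, so if $z$ is an $n$-cycle (an augmentation-zero $0$-chain when $n=0$) whose vertices all lie in a nonempty set $S\subset Y$ and $y\in S$, the chain $\operatorname{Cone}_y(z)$ satisfies $\d\operatorname{Cone}_y(z)=z$ with all of its vertices still in $S$. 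This ``filling inside the support'' is the mechanism that carries the metric control of $f$ over to $D$.

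I define $D$ on basis simplices and extend linearly. For a $0$-simplex $x_0$ the chain $f(x_0)-x_0$ has augmentation $0$ --- here I use that $f$, like $i$, preserves the augmentation, which holds for the maps $V\circ M$ of Lemma~\ref{l:filling} since each point is sent to a singular chain of total weight one --- so I set $D(x_0):=\operatorname{Cone}_{x_0}(f(x_0)-x_0)$. Assuming $D$ has been defined on simplices of degree $<n$ so that $\d D+D\d=f-i$ there, for a basis $n$-simplex $\gs^n$ with first vertex $x_0$ put
\[
z:=f(\gs^n)-\gs^n-D(\d\gs^n),
\]
and $D(\gs^n):=\operatorname{Cone}_{x_0}(z)$. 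A short computation using $\d f=f\d$, the inductive identity $\d D=(f-i)-D\d$ in degree $n-1$, and $\d\d=0$ shows $\d z=0$; hence $\d D(\gs^n)=z$, which is exactly $(\d D+D\d)(\gs^n)=(f-i)(\gs^n)$. Extending linearly gives a chain homotopy from $i$ to $f$.

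It remains to verify the support bound, which is where coning pays off. Set $N:=N_{\rho_n(\diam\gs^n)}(\gs^n)$. Each face $\gs^{n-1}_j$ of $\gs^n$ has $\gs^{n-1}_j\subset\gs^n$ and $\diam\gs^{n-1}_j\le\diam\gs^n$, so because the sequence $\rho_n$ is non-decreasing both in its argument and in the index, the inductive bound gives $|D(\gs^{n-1}_j)|\subset N_{\rho_{n-1}(\diam\gs^{n-1}_j)}(\gs^{n-1}_j)\subset N$. Together with $|f(\gs^n)|\subset N$ (the hypothesis on $f$) and $|\gs^n|\subset N$, this yields $|z|\subset N$; since the apex $x_0\in N$, coning keeps all vertices in $N$, so $|D(\gs^n)|\subset N_{\rho_n(\diam\gs^n)}(\gs^n)$, as required.

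The one genuine subtlety --- and the reason a black-box acyclic-models argument will not suffice --- is that such an argument yields a homotopy with no control on supports, whereas here the fillings must stay confined to the support of the cycle being filled; this is precisely the special feature of the full complex, that a cycle supported on a vertex set $S$ bounds within $S$. I would also flag the degree-$0$ step, where augmentation-preservation of $f$ is essential: the zero map trivially satisfies the support hypothesis yet is not homotopic to $i$, so this hypothesis is exactly what allows the induction to start.
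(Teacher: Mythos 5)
Your proposal is correct and follows essentially the same route as the paper: induction on dimension, coning the defect cycle $c=i(\gs)-f(\gs)-D\d(\gs)$ at a vertex lying in the controlled neighborhood (the paper cones at a vertex of $c$, you at the first vertex of $\gs$, an immaterial difference), with the support bound carried through by monotonicity of $\rho_n$. Your explicit flagging of the augmentation-preservation needed to start the induction at degree $0$ is a point the paper's proof leaves implicit in writing $D_0(x):=(x,f(x))$, and it is indeed necessary, since otherwise $f=0$ would be a counterexample.
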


 \begin{proof}
     We can define $D$ by induction on the dimension of $\gs$.
     Define $D_0(x):=(x,f(x))$. Note that $f(x)-x=\d D(x)$ and $D_0(x)\subset N_{\rho_0(\diam \{x\})}(x)$.
     Suppose we have defined $D_m:\cf[m](X)\rightarrow \cf[m+1](X)$ such that $D_m(\gs)\subset N_{\diam(\gs)}(\gs)$ and $\d D_m(\gs)+D_{m-1}\d(\gs)=i(\gs)-f(\gs)$ for any $m\leq n$.
     To define $D_{n+1}(\gs)$ for an $(n+1)$-simplex $\gs$, consider $c=i(\gs)-f(\gs)-D_n\d(\gs)$. 
     By induction hypothesis, $c$ is a cycle and $|c|\subset N_{\rho(\diam(\gs))}(\gs)$.
     Take a vertex $b$ from the chain $c$, and consider the cone operator $T_b:\cf[n+1](X)\rightarrow \cf[n+2](X)$, $(x_0,\ldots,x_{n+1})\mapsto (b,x_0,\ldots,x_{n+1})$.
     Define $D_{n+1}(\gs):=T_b(c)$.
     By construction,  $\d D_{n+1}(\gs)=c=i(\gs)-f(\gs)-D_n\d(\gs)$ and  $|D_{n+1}(\gs)|\subset N_{\rho_n(\diam(\gs))}(\gs)$.
\end{proof}
 
Let $\csu(X)$ be the complex of singular chains supported by $\cU$.
Combining the previous two lemmas we can now prove the following which will be the main ingredient to prove Theorem~\ref{easier theorem}.

\begin{Prop}\label{G and S} 
  Suppose $X$ is uniformly acyclic at infinity and locally acyclic at infinity. Let $\cU$ be an open cover of $X$.
  For each $x\in X$, fix a set $U_x\in \cU$ that contains $x$.
  Then there exist two increasing sequences of control functions $\mu_{n}$ and $\rho_{n}$ and
 a chain map $S: \cff(X) \to \csu(X)$ where
 \[\cff[n](X)=\langle \gs^{n} \mid d(\gs^{n},b)\geq \mu_{n}(\diam \gs^{n}) \rangle \subset \cf[n](X)
 \]
 and a map $G: \cff(X) \to \cf[*+1](X)$ so that
 
\begin{enumerate}
 \item \label{i:pchain} $G$ is a chain homotopy between $VS$ and the inclusion map $i:\cf^F(X)\rightarrow\cf(X)$: 
    \[\d G(\gs)=i(\gs)-VS(\gs)-G\d(\gs).\]
  \item \label{i:pcontrol support} 
  $\Supp{G(\gs^{n})}  \subset N_{\rho_{n}(\diam(\gs^n))}( \gs^{n})$.
 \item \label{i:psmall simplex} There exists a bounded set $B$ such that for any point $x\notin B$ and a neighborhood $U$ of $x$, there is a neighborhood $W$ of $x$ so that $|G(\gs^n)|\subset U^{n+2}$ for any $\gs\in W^{n+1}$.
 \end{enumerate}
\end{Prop}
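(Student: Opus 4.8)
The plan is to build $S$ by composing the filling map $M$ of Lemma~\ref{l:filling} with iterated barycentric subdivision, and to assemble $G$ as a sum of two chain homotopies: one supplied by Lemma~\ref{l:close maps are homotopic} and one pushed forward from the standard subdivision homotopy. The control functions $\mu_{n},\rho_{n}$ will simply be inherited from Lemma~\ref{l:filling}.

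First I would set up the ingredients. Lemma~\ref{l:filling} gives a chain map $M\colon\cff(X)\to\cs(X)$ with $\Supp{M(\gs^{n})}\subset N_{\rho_{n}(\diam(\gs^{n}))}(\gs^{n})$ and the small-simplex property~\eqref{i:small simplex}; since the forgetful map $V\colon\cs(X)\to\cf(X)$ is a chain map, $VM$ is a chain map with $\Supp{VM(\gs^{n})}\subset N_{\rho_{n}(\diam(\gs^{n}))}(\gs^{n})$. Because $\{\mu_{n}\}$ is increasing, $\cff(X)$ is a subcomplex of $\cf(X)$, so Lemma~\ref{l:close maps are homotopic} applies to $VM$ on this subcomplex (its inductive construction only refers to faces, which stay in $\cff(X)$) and yields a chain homotopy $D_{1}\colon\cff(X)\to\cf[*+1](X)$ with $\d D_{1}+D_{1}\d=i-VM$ and $\Supp{D_{1}(\gs^{n})}\subset N_{\rho_{n}(\diam(\gs^{n}))}(\gs^{n})$.

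Next let $q\colon\cs(X)\to\csu(X)$ be the iterated-subdivision chain map produced by the standard small-simplices construction for the cover $\cU$, and let $P\colon\cs(X)\to\cs[*+1](X)$ be its chain homotopy, so $\d P+P\d=\mathrm{id}-q$; crucially, $\Supp{qc}\subset\Supp{c}$ and $\Supp{Pc}\subset\Supp{c}$, since subdivision and its prism operator stay inside the images of the simplices. Define $S:=qM\colon\cff(X)\to\csu(X)$ (a chain map landing in $\cU$-small chains), set $D_{2}:=VPM$, and put $G:=D_{1}+D_{2}$. Using that $V$ and $M$ are chain maps, a direct computation gives
\[ \d D_{2}+D_{2}\d=V(\d P+P\d)M=V(\mathrm{id}-q)M=VM-VS, \]
whence $\d G+G\d=(i-VM)+(VM-VS)=i-VS$, which is exactly~\eqref{i:pchain}. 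For~\eqref{i:pcontrol support}, note $\Supp{D_{2}(\gs^{n})}\subset\Supp{PM(\gs^{n})}\subset\Supp{M(\gs^{n})}\subset N_{\rho_{n}(\diam(\gs^{n}))}(\gs^{n})$, and $D_{1}$ is controlled by the same $\rho_{n}$, so $G$ is too.

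The main obstacle is property~\eqref{i:psmall simplex}, and this is where routing everything through image-preserving subdivision pays off. Take the bounded set $B$ and, for $x\notin B$ and a neighborhood $U$ of $x$, the neighborhood $W$ from Lemma~\ref{l:filling}\eqref{i:small simplex} with $\Supp{M(\gs^{n})}\subset U$ for $\gs^{n}\in W^{n+1}$. Then $\Supp{D_{2}(\gs^{n})}\subset\Supp{M(\gs^{n})}\subset U$ handles $D_{2}$ for free, since $P$ and $q$ never enlarge support. For $D_{1}$ the property must instead be threaded through the inductive cone construction of Lemma~\ref{l:close maps are homotopic}: writing $D_{1}(\gs)=T_{b}(c)$ with $c=i(\gs)-VM(\gs)-D_{1}\d(\gs)$, one checks by induction on dimension (shrinking $W$ at each of the finitely many stages) that all vertices of $c$ lie in $U$, using the small-simplex property of $M$ together with the inductive hypothesis on lower-dimensional faces, so that coning off any vertex $b\in U$ keeps every vertex in $U$ and gives $\Supp{G(\gs^{n})}\subset U^{n+2}$. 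The delicate bookkeeping is thus confined to this nested-neighborhood induction for $D_{1}$; confirming that the single sequence $\rho_{n}$ simultaneously controls $S$, $D_{1}$, and $D_{2}$ is then immediate.
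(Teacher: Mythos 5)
Your proposal is correct, and it follows the paper's overall strategy — define $S$ by composing the filling map $M$ of Lemma~\ref{l:filling} with subdivision into $\cU$-small chains, get the homotopy from Lemma~\ref{l:close maps are homotopic}, and derive property~\eqref{i:psmall simplex} from Lemma~\ref{l:filling}\eqref{i:small simplex} — but you assemble $G$ differently. The paper applies Lemma~\ref{l:close maps are homotopic} once, directly to $VS$ (using that subdivision does not enlarge supports, so $\Supp{VS(\gs^n)}\subset N_{\rho_n(\diam(\gs^n))}(\gs^n)$), obtaining $G$ in a single stroke with properties~\eqref{i:pchain} and~\eqref{i:pcontrol support} immediate; it then asserts~\eqref{i:psmall simplex} in one line. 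You instead split $G=D_1+D_2$, applying the lemma only to $VM$ and adding the pushed-forward subdivision prism $D_2=VPM$, with the verification $\d D_2+D_2\d=V(\mathrm{id}-q)M=VM-VS$. Your decomposition costs an extra operator and a short computation, but it buys a cleaner treatment of~\eqref{i:psmall simplex}: the $D_2$ part is handled for free since $q$ and $P$ preserve supports, and the genuinely delicate point — that the cone construction inside Lemma~\ref{l:close maps are homotopic} must choose its cone vertices in $U$, which requires a nested-neighborhood induction on dimension using the small-simplex property of $M$ on the faces — is isolated in $D_1$ and made explicit. That induction is in fact needed in the paper's route as well (the homotopy there is also built by coning, so quoting Lemma~\ref{l:close maps are homotopic} as a black box does not by itself yield~\eqref{i:psmall simplex}), so your write-up supplies detail the paper elides; your observation that the induction in Lemma~\ref{l:close maps are homotopic} restricts to the subcomplex $\cff(X)$ because faces stay in $\cff(X)$ (as the $\mu_n$ are increasing) is likewise a point the paper uses silently.
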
 

\begin{proof}
Let $\mu_*$, $\rho_*$ and $\cff(X)$ be the ones provided by the lemma~\ref{l:filling}.
    We choose a barycentric subdivision map $p:\cs(X)\rightarrow \csu(X)$ and compose it with the map $M$ from lemma~\ref{l:filling} to get the map $S:\cff(X)\rightarrow \csu(X)$.
    Since $|M(\gs^n)|\subset N_{\rho_n(\diam(\gs^n))}(\gs^n)$ and $|p(M(\gs^n))|\subset |M(\gs^n)|$, it follows that $|S(\gs^n)|\subset N_{\rho_n(\diam(\gs))}(\gs^n)$.
   Applying the Lemma~\ref{l:close maps are homotopic} to the map $VS$, we get the homotopy $G$ between $VS$ and $i$ with property \eqref{i:pcontrol support}.
   Finally property \eqref{i:psmall simplex} follows from the property \eqref{i:small simplex} of the map $M$ in Lemma~\ref{l:filling}. 
\end{proof}

We are now ready to prove Theorem~\ref{easier theorem}.
 \begin{proof}[\textbf{Proof of \ref{easier theorem}}]
	By the long exact sequence, we need to show that $\H(\Cb(X)/\Cx(X))=0$.
	That means for $\phi \in \Cb[n](X)$ with $d\phi \in \Cx[n+1](X)$, we need to find $\psi \in \Cb[n-1](X)$ so that $\phi - d\psi \in \Cx[n](X)$.
	Let $U$ be a bounded neighborhood of $ \spp{\phi }$ in $X$ and for each $x \in X - \spp{\phi } $ choose a neighborhood $U_{x}$ such that $U_{x}^{n+1} \cap \Supp{\phi}=\emptyset$.
	Let $\cU$ denote the cover of $X$ that consist of the collection of $U_{x}$ together with $U$.
	
Proposition~\ref{G and S} produces the chain homotopy $G:  \cff(X) \to \cf[*+1](X)$, which we use to define a linear map $D:  \cf(X) \to \cf[*+1](X)$ by setting 
	\[
	D(\gs^{n})=
		\begin{cases}
 			G(\gs^{n}) &\text{ if } \gs^n\in \cff[n](X),\\
			0 &\text{ otherwise.}
		\end{cases}	
	\]

	We  define
	\[
		T=id- \partial D- D \partial,
	\]

	 Dually we have
	\[
		T^{*}= id - D^{*} d - d D^{*} .
	\]

 We claim that $T^{*}\phi\in \Cx[n](X)$.
 By Proposition~\ref{G and S}\eqref{i:pchain}, $T(\gs)=VS(\gs)$ for all $\gs\in \cff[n](X)$.
 If $\diam(\gs^n)\leq k$ and $d(\gs^n,b)>\mu_n(k)$ for some $k\geq 0$, then $\gs^n \in \cff(X)$ and hence $T(\gs^n)=VS(\gs^n)$.
	Moreover, if $\gs^n$ is outside of the $\rho_{n}(k)$-neighborhood of $U^{n+1}$, then $|T(\sigma^n)|$ does not touch $U^{n+1}$ because $|T(\gs^n)|=|VS(\gs^n)|\subset N_{\rho_n(k)}(\gs^n)$. 
 This implies a simplex in $|T(\gs)|$ belongs to $U_x^{n+1}$ for some $x$.
 Therefore
	 $(T^{*}\phi)(\gs^n)=\phi(T (\gs^n))=0$.
  In other words, $\gs^n\notin |T^*\phi|$.
  It follows that, 
  \[|T^*\phi|\cap N_k(\gD)\subset N_{\mu_n(k)}(b)\cup N_{\rho_n(k)}(U^{n+1}).\]
  Since $U$ is bounded by assumption, 
$|T^*\phi|\cap N_k(\gD)$ is bounded.
This proves the claim.

Next we claim that $D^{*}d(\phi')$ is coarse.
  We have $\Supp{G\gs}\subset N_{\rho_{n}(\diam(\gs))}(\gs)$ from Proposition~\ref{G and S}\eqref{i:pcontrol support}.
 Therefore, by construction $\Supp{D\gs}\subset N_{\rho_{n}(\diam(\gs))}(\gs)$.
	So, $D^{*}$ preserves coarseness, and the claim follows since $d(\phi')$ is coarse.
 
Finally we claim that  $D^*(\phi)\in \Cb[n-1](X)$. By Proposition~\ref{G and S}\eqref{i:psmall simplex}, we can choose a bounded set $B$ containing $U$ such that for any point $x\notin B$, and a $U_x\in \cU$ containing $x$, there is  a neighborhood $W$ of $x$ so that $|G(\gs^n)|\subset U_x^{n+2}$ for any $\gs^n\in V^{n+1}$.
Hence for $x\notin  B\cup ||\phi||$, we have $|D_*(\sigma)|\notin |\phi|$ for all $\sigma\in W^{n+1}$.
Therefore, $||D^*(\phi)||\subset ||\phi||\cup B$. The claim follows since $\phi\in \Cb(X)$.

Since $\phi+dD^*(\phi)=T^*(\phi)-D^*d\phi$, our desired cochain is $\psi=-D^{*}(\phi)$.
\end{proof}

Combining Theorem \ref{pmain theorem} and Proposition \ref{p:bdd=singular} we get the following
\begin{Corollary}\label{c:coa=sing}
If $X$ is unbounded,  uniformly acyclic at infinity and locally acyclic at infinity, then for any $b\in X$
\[\Hx(X; R)=\begin{cases} 0 & \text{if  $*=0$}\\   \varinjlim \Hr[*-1](X-N_r(b); R) & \text{otherwise}.
    \end{cases}\]

\end{Corollary}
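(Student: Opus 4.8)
The plan is to obtain the formula by chaining together two results already established in the paper, so that the argument reduces to checking that their hypotheses apply and that the two isomorphisms compose.

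First I would observe that the hypotheses of the corollary---$X$ uniformly acyclic at infinity and locally acyclic at infinity with coefficients in $R$---are precisely the hypotheses of Theorem~\ref{easier theorem}. That theorem asserts that the inclusion $\Cx(X;R)\hookrightarrow \Cb(X;R)$ induces an isomorphism $\Hx(X;R)\cong \Hb(X;R)$ in every degree. This disposes of the coarse side entirely: it identifies coarse cohomology with boundedly supported cohomology, where the latter is the more computable object.

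Next, since $X$ is assumed unbounded and $b\in X$, I would apply Proposition~\ref{p:bdd=singular}\eqref{i:unbdd} to evaluate the right-hand side. That proposition gives $\Hb[0](X;R)=0$ and, for $*\geq 1$, an isomorphism $\Hb(X;R)\cong \varinjlim \Hr[*-1](X-N_r(b);R)$. Composing with the isomorphism from the previous step produces the stated case distinction, which is the content of the corollary.

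There is no substantive obstacle here beyond bookkeeping: both ingredients are already proved, and the only points to verify are that the degree $0$ case is correctly handled (which Proposition~\ref{p:bdd=singular}\eqref{i:unbdd} records, using the unboundedness of $X$) and that the two isomorphisms are literal identifications of cohomology groups, so that their composition requires no naturality hypothesis. The one mildly delicate point worth flagging is that the surrounding text cites Theorem~\ref{pmain theorem}; since the hypotheses stated in the corollary are exactly those of the warm-up theorem, the isomorphism $\Hx(X;R)\cong\Hb(X;R)$ is most directly invoked from Theorem~\ref{easier theorem} (uniform contractibility at infinity would of course also suffice via Theorem~\ref{pmain theorem}, but it is not assumed here).
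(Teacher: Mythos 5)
Your proof is correct and is essentially the paper's own argument: the corollary is stated there with no separate proof precisely because it follows by composing the isomorphism $\Hx(X;R)\cong\Hb(X;R)$ with Proposition~\ref{p:bdd=singular}\eqref{i:unbdd}, exactly as you do. You are also right about the delicate point: the paper's citation of Theorem~\ref{pmain theorem} at this spot is a slip, since the corollary's hypotheses (uniform acyclicity at infinity plus local acyclicity at infinity) are those of the warm-up Theorem~\ref{easier theorem}, which is the result actually being invoked.
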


\begin{Example}
Let $X$ be the space appearing in figure \ref{fig:pack of circles}. 
$X$ is unbounded, uniformly acyclic at infinity and locally acyclic. 
By Theorem~\ref{easier theorem}, we have $\Hx(X)=\Hb(X)$.
It follows from Example~\ref{Hb of pack of circle} that
\begin{align*}
    \Hx(X;\zz)
    &=\begin{cases}
\Pi_{i=0}^\infty\zz/\oplus_{i=0}^\infty\zz \quad &*=2\\
0 \quad &\text{otherwise}.
    \end{cases}
\end{align*}

\end{Example}
\begin{Remark}\label{a curious example}
  Theorem~\ref{easier theorem} does not hold if we drop the locally acyclic at infinity condition.
For instance, consider the space $X=\bigsqcup_{i=1}^\infty S_i$ of disjoint union of countably infinite Warsaw circles.
We can embed $X$ into $\rr^2$ in a way so that $\diam(S_i)=1$ for all $i$ and  $X$ is coarsely equivalent to a ray $[0,\infty)$ with the subspace metric.
 $\Hx[2](X;\zz)=\Hx[2](X;\zz)=\Hx[2]([0,\infty);\zz)=0$.
 On the other hand, each Warsaw circle has nontrivial Alexander--Spanier cohomology in degree 1.
 It follows that, $\varinjlim \Hr[1](X-N_r(b); \zz)\neq 0$.
 So, the conclusion of the above Corollary~\ref{c:coa=sing} fails in this case.
\end{Remark}
\section{Proof of the main theorem}\label{main proof}

In this section we prove our main theorem. We will continue to suppress the coefficient ring $R$ from the notation.
\begin{Theorem}\label{main theorem}
 If $X$ is uniformly contractible at infinity, 
 then the inclusion $\Cx(X) \hookrightarrow \Cb(X)$ induces an isomorphism on cohomology:
	\[
		\Hx(X) \cong \Hb(X).
	\]
\end{Theorem}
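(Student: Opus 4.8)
The plan is to adapt the proof strategy of Theorem~\ref{easier theorem} (the warm-up theorem), since Theorem~\ref{main theorem} is the analogous statement with ``uniformly acyclic at infinity and locally acyclic at infinity'' strengthened to ``uniformly contractible at infinity.'' As in the warm-up, by the long exact sequence associated to $0 \to \Cx(X) \to \Cb(X) \to \Cb(X)/\Cx(X) \to 0$, it suffices to show $\H(\Cb(X)/\Cx(X)) = 0$. So I would take $\phi \in \Cb[n](X)$ with $d\phi \in \Cx[n+1](X)$ and produce $\psi \in \Cb[n-1](X)$ with $\phi - d\psi \in \Cx[n](X)$. The whole game is to construct a suitable dual chain homotopy $D^*$ so that $\psi = -D^*(\phi)$ works, exactly mirroring the final paragraph of the warm-up proof.

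The key structural steps are to reprove the analogues of Lemma~\ref{l:filling} and Proposition~\ref{G and S} under the contractibility hypothesis. First I would redo the ``connect the dots'' filling construction: uniform contractibility at infinity lets me inductively fill boundaries of simplices lying sufficiently far from the basepoint by singular chains in controlled neighborhoods, yielding a chain map $M:\cff(X) \to \cs(X)$ with support control $\Supp{M(\gs^n)} \subset N_{\rho_n(\diam \gs^n)}(\gs^n)$. The crucial difference from the acyclic case is property \eqref{i:small simplex}: in the warm-up this required the separate hypothesis of local acyclicity at infinity to guarantee small fillings of small simplices. Under uniform contractibility at infinity the space is locally contractible at infinity automatically, so I expect the diameter-$2k(\gs)$ filling trick described before Lemma~\ref{l:filling} to deliver the smallness property without any extra assumption. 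Then, composing with a barycentric subdivision $p$ adapted to an open cover $\cU$ and applying Lemma~\ref{l:close maps are homotopic} to $VS$, I obtain the chain homotopy $G$ between $VS$ and the inclusion, with the same three properties as in Proposition~\ref{G and S}.

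With $G$ in hand the endgame is verbatim the warm-up argument. Given $\phi$, I choose a bounded neighborhood $U$ of $\spp{\phi}$ and, for each $x \notin \spp{\phi}$, a neighborhood $U_x$ with $U_x^{n+1} \cap \Supp{\phi} = \emptyset$; these together with $U$ form the cover $\cU$ fed into Proposition~\ref{G and S}. Extend $G$ to $D$ by zero off $\cff(X)$, set $T = \operatorname{id} - \d D - D\d$ with dual $T^* = \operatorname{id} - D^* d - d D^*$. Then three claims finish the proof: $T^*\phi$ is coarse (its stabilized support meets each $N_k(\gD)$ in a bounded set, using the support control of $VS$ and boundedness of $U$); $D^* d\phi$ is coarse (since $D^*$ preserves coarseness by the support control and $d\phi$ is coarse by hypothesis); and $D^*\phi \in \Cb[n-1](X)$ (by property \eqref{i:psmall simplex}, $\|D^*\phi\| \subset \|\phi\| \cup B$). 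Since $\phi + dD^*\phi = T^*\phi - D^* d\phi$, taking $\psi = -D^*\phi$ completes the argument.

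The main obstacle, and the reason this is not a trivial corollary of the warm-up, is highlighted in the remark following Corollary~\ref{c:coa=sing} and in the discussion of uniform acyclicity: the theorem genuinely fails if contractibility is weakened to acyclicity, so the proof must exploit contractibility in an essential way. Concretely, the hard part will be the inductive filling in the new Lemma~\ref{l:filling}: to build an actual chain map $M$ (not merely fillings degree-by-degree), I need each boundary cycle to bound, and I need the fillings to be compatible under $\d$, which requires the filled chain $f(\d \gs)$ to itself be a boundary --- this is where $\pi_k$-triviality (contractibility), rather than mere $H_k$-triviality (acyclicity), is used, precisely because I am filling along singular simplices whose boundaries are genuine cycles that must be coned off. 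I would make sure the smallness clause \eqref{i:small simplex} is derived purely from local contractibility at infinity, so that no independent ``locally acyclic at infinity'' hypothesis leaks into the statement.
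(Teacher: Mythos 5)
There is a genuine gap, and it sits exactly where you waved your hands: the claim that ``under uniform contractibility at infinity the space is locally contractible at infinity automatically'' is false. Uniform contractibility at infinity says a set of diameter $r$ contracts inside a set of diameter $\rho(r)$, but nothing forces $\rho(r)\to 0$ as $r\to 0$; small simplices far from the basepoint are only guaranteed fillings of \emph{bounded}, not small, size. So the smallness clause (property \eqref{i:small simplex} of Lemma~\ref{l:filling}, hence \eqref{i:psmall simplex} of Proposition~\ref{G and S}) cannot be derived inside $X$, and without it the final claim $\|D^*\phi\|\subset\|\phi\|\cup B$ --- the one that puts $\psi$ in $\Cb[n-1](X)$ --- collapses. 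This is not a technicality the paper glosses over; it is the entire content of Section~\ref{main proof}. The paper's actual proof embeds $X$ isometrically in $\ell^\infty(X)$ (Kuratowski), fills \emph{small} simplices by their convex fillings $c(\gs)$ in an open neighborhood $N(X)$, fills \emph{large} simplices inside $X$ using the contracting homotopies, and spends two delicate lemmas (Lemmas~\ref{thin filling} and~\ref{thin filling 2}, via the cone operators $\bar H$) making these two regimes compatible into a single chain map into $\cs(N(X))$. Because $G$ then lands in $\cf[*+1](N(X))$ rather than $\cf[*+1](X)$, the endgame is also not verbatim: one needs the tautness statement (Proposition~\ref{proj map and nbd}) to extend $\phi$ to $\phi'\in\Cb(N(X))$ and to replace $T=\operatorname{id}-\partial D-D\partial$ by $T=i-\partial D-D\partial$ with $i:\cf(X)\to\cf(N(X))$.

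Your diagnosis of where contractibility (as opposed to acyclicity) enters is also misplaced. Building a chain map $M:\cff(X)\to\cs(X)$ degree by degree only requires cycles to bound, i.e.\ homology vanishing --- that is exactly what the warm-up Lemma~\ref{l:filling} does under mere uniform acyclicity at infinity, so ``$\pi_k$-triviality is needed to cone off boundary cycles'' is not the obstruction. Contractibility is used for something finer: genuine contracting homotopies $H_t^{B(\gs)}$ yield not only the fillings $f(\gs)$ but also coherent homotopies $D(\gs)$ between $f(\gs)$ and the convex filling $c(\gs)$, with $|D(\gs)|$ staying in a controlled neighborhood of $X$ --- which is what lets small simplices be rerouted to their convex fillings in $N(X)$. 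The Warsaw-circle picture (Figure~\ref{a bad example}) shows this homotopy need not exist in an acyclic space: a short $1$-simplex $(x,y)$ has fillings in $X$ only going around the circle, and no homotopy in $N(X)$ connects them to the convex filling, with such $y$ in every neighborhood of $x$. So a correct proof must, as the paper warns, exploit the homotopies themselves and not just the vanishing of homology; your proposal, which stays inside $X$ and appeals to a nonexistent local contractibility, would prove the (false, by Remark~\ref{a curious example}) acyclic version of the theorem as well.
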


The underlying idea of the proof is similar to the proof of Theorem~\ref{easier theorem}.
However, recall from Remark~\ref{a curious example} that Theorem~\ref{main theorem} does not hold if we replace uniform contractibility by uniform acyclicity.
This means any proof of the Theorem~\ref{main theorem} needs to be able to distinguish between uniform
acyclicity and uniform contractibility.

Most of the work will go into  proving Proposition~\ref{main lemma} which is an analog of Proposition~\ref{G and S} when $X$ is uniformly contractible at infinity. 
The main difficulty of getting such analog is the absence of local acyclicity of  $X$ which was present in Proposition~\ref{G and S}.
To get around that, we exploit the Kuratowski's embedding theorem~\cite{Kura} that says any metric space $X$ can be isometrically embedded inside the locally acyclic space $\ell^\infty(X)$ which is the space of  all bounded sequence $\{x_n\}$ in $X$ with the metric $d(\{x_n\},\{y_n\})=\sup_{n\in \nn} \{d(x_n,y_n)\}$.
We then take neighborhood of $X$ inside $\ell^\infty(X)$.
This neighborhood will be locally acyclic but  might not be uniformly acyclic which is another condition we absolutely need.
This tension make the proof of Proposition~\ref{main lemma} more subtle than Proposition~\ref{G and S}.

For convenience, we will use $\ell^\infty$ to mean $\ell^\infty(X)$ for the rest of the paper.

\begin{Prop}\label{main lemma}
 Suppose $X\subset \ell^\infty$ is  uniformly contractible  at infinity. 
   Let $N(X)$ be an open neighborhood of $X$ in $\ell^\infty$. Let $\mathcal{U}$ be an open cover of $N(X)$.
 Then there exist two non-decreasing sequence of control functions $\rho_n, \mu_n:\rr_{\geq 0}\rightarrow \rr_{\geq0}$ and a subcomplex $\cf^F(X)$ of $\cf(X)$ of the following form
\[\cf[n]^F
(X) = \langle \sigma^n \mid  d(\sigma^n, b) > \mu_n(\diam (\sigma^n))\rangle \subset  \cf[n](X),
\]
 a chain map $S:\cf^F(X)\rightarrow \cf^{\mathcal{U}}(N(X))$ and a map $G:\cf^F(X)\rightarrow \cf[*+1](N(X))$  with the following properties.
     \begin{enumerate}

    \item \label{i:mpchain map} $G$ is a chain homotopy between $VS$ and the inclusion map $i:\cf^F(X)\hookrightarrow\cf(N(X))$. 
    \[\d G(\gs)=VS(\gs)-i(\gs)-G\d(\gs)\]
      \item \label{i:mpsupport control} 
         $|G(\sigma^n)|\subset N_{\rho_n(\diam(\sigma^n))}(\sigma^n)$. 
        \item \label{i:mpsmall simplex} For any $k\in \nn$, there exists a bounded set $B\subset X$ such that for any $x\notin B$ and a neighborhood $U$ of $x$ in $\ell^\infty$, there is a neighborhood $W$ of $x$ in $X$ such that $|G(\sigma^n)|\subset U^{n+2}$ for all $\sigma^n\in W^{n+1}$ if $n\leq k$. 
        \label{property 3}
        
    \end{enumerate}
\end{Prop}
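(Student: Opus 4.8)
I would prove Proposition~\ref{main lemma} by running the same three-step scheme that produced Proposition~\ref{G and S}, but carried out inside the ambient locally acyclic space $\ell^\infty$ rather than in $X$ itself. The idea is to replace the filling map $M$ of Lemma~\ref{l:filling} (which used local acyclicity of $X$) by a filling map valued in $N(X)$, then barycentrically subdivide into the cover $\cU$ to get $S$, and finally invoke Lemma~\ref{l:close maps are homotopic} to produce the controlled chain homotopy $G$ between $VS$ and the inclusion. Since $\ell^\infty$ is contractible (hence locally acyclic everywhere), the local smallness of fillings --- property~\eqref{i:mpsmall simplex} --- should come essentially for free from contractibility of $\ell^\infty$, exactly as property~\eqref{i:small simplex} came from local acyclicity of $X$ in Lemma~\ref{l:filling}. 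The novelty, and the whole point of passing to $\ell^\infty$, is that we no longer need $X$ to be locally acyclic at infinity.

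\textbf{Constructing the filling map.} First I would build, by induction on simplex dimension, a chain map $M\colon \cf^F(X)\to \cs(N(X))$ with $|M(\gs^n)|\subset N_{\rho_n(\diam\gs^n)}(\gs^n)$, filling each simplex of $\cf^F(X)$ inside a controlled neighborhood taken \emph{inside $N(X)$}. The key subtlety --- and this is where uniform contractibility (not mere acyclicity) is used --- is that to fill an $n$-simplex $\gs$ one must first have a filling of its boundary $\d\gs$ by a chain $M(\d\gs)$, and then cone that chain off to a point; coning requires a genuine contraction, i.e.\ a nullhomotopy, of the cycle $M(\d\gs)$, and this is precisely what uniform contractibility at infinity supplies but uniform acyclicity does not. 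I would set up the control functions $\mu_n,\rho_n$ recursively: $\mu_n$ must be large enough that any $n$-simplex of $\cf^F(X)$, together with the already-chosen filling of its boundary, lies far enough from $b$ for the contractibility hypothesis to apply, while $\rho_n$ tracks the growth of the contraction's diameter. The filling can be produced in $\ell^\infty$ and then pushed into the open neighborhood $N(X)$, using that the fillings of boundary data stay in a controlled neighborhood of $X$.

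\textbf{Finishing.} Once $M$ is in hand, I would compose with a barycentric subdivision $p$ refined to the cover $\cU$ of $N(X)$ to obtain $S=pM\colon \cf^F(X)\to \cf^{\cU}(N(X))$, noting as in Proposition~\ref{G and S} that subdivision does not increase supports so property~\eqref{i:mpsupport control} is inherited. Applying Lemma~\ref{l:close maps are homotopic} to the chain map $VS$ and the inclusion $i$ --- both landing in $\cf(N(X))$ and agreeing up to the controlled neighborhoods --- yields the chain homotopy $G$ satisfying \eqref{i:mpchain map} and \eqref{i:mpsupport control}. Finally \eqref{i:mpsmall simplex} follows from the local control built into $M$ at the filling stage, propagated through $p$ and through the explicit cone construction in Lemma~\ref{l:close maps are homotopic}.

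\textbf{The main obstacle.} The hard part will be the inductive construction of $M$ and the bookkeeping of the two tensions identified in the text: $N(X)$ is locally acyclic (good, this gives property~\eqref{i:mpsmall simplex}) but need \emph{not} be uniformly acyclic at infinity, so I cannot simply quote Lemma~\ref{l:filling} for $N(X)$; instead the uniform fillings must be manufactured using the uniform contractibility of $X$ and then \emph{transported} into $N(X)$, keeping every filling within a controlled $\rho_n$-neighborhood and far enough from the basepoint. Making the diameter bound $\rho_n$ and the threshold $\mu_n$ compatible across all dimensions simultaneously --- so that a boundary's filling is always already far enough out for the next coning step --- is the delicate combinatorial core, and distinguishing contractibility from acyclicity here is exactly what forces the argument to be subtler than the warm-up.
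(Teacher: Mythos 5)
Your endgame (compose the filling with barycentric subdivision to get $S$, then apply Lemma~\ref{l:close maps are homotopic} to $VS$ to obtain $G$ with properties \eqref{i:mpchain map} and \eqref{i:mpsupport control}) matches the paper, but the core of your construction has a genuine gap: the inductive step ``fill $\d\gs$, then cone the filling off using the contraction supplied by uniform contractibility at infinity'' is not executable once your fillings leave $X$. The contracting homotopies furnished by the hypothesis are defined only on subsets $B\subset X$; as soon as some face of $\gs$ has been given a small filling inside $N(X)$ off of $X$ --- which is forced if you want property \eqref{i:mpsmall simplex}, since $X$ need not be locally acyclic --- the chain $M(\d\gs)$ no longer lies in the domain of any such homotopy and cannot be coned. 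Nor can you ``transport'' an $\ell^\infty$-filling into $N(X)$ or push it back to $X$: there is no continuous controlled retraction $N(X)\to X$ in this generality (the projection of Lemma~\ref{tautness criterion} need not be continuous, and $X$ is not an ANR --- think of unions of Warsaw circles). Relatedly, your claim that property \eqref{i:mpsmall simplex} comes ``essentially for free from contractibility of $\ell^\infty$'' is exactly what Figure~\ref{a bad example} refutes: small cycles do bound small chains in $\ell^\infty$, but the small chain must lie in $N(X)$ \emph{and} must be compatible with the $X$-made fillings of the larger simplices having that face; for the Warsaw circle, any filling in $X$ of a short edge $(x,y)$ must go around the circle and admits no homotopy inside $N(X)$ to the small convex filling, so the two regimes cannot be reconciled after the fact.

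The paper's proof supplies precisely the mechanism your outline leaves unconstructed, in two stages. First (Lemma~\ref{thin filling}) it builds a filling $f:\cf^F(X)\to\cs(X)$ staying entirely inside $X$, via iterated cone operators $\bar{H}^{B(\gs)}$ applied to the contracting homotopies themselves (only homotopy tracks of vertices occur, no convex combinations), \emph{together with} an explicit chain homotopy $D$ between $f$ and the convex filling $c$, obtained by coning $c(\gs)$ along those same homotopies; the support of $D(\gs)$ lies within a controlled distance $r_s(\gs)$ of $X$, and a local-finiteness argument makes $r_s(\gs)$ small for small simplices far from the basepoint, so that $D(\gs)$ then lands in $N(X)$. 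Second (Lemma~\ref{thin filling 2}) it glues $f$ and $D$ by hand --- e.g.\ $g(\gs)=f(\gs)-[\gs:\gs']D(\gs')$ when $\gs$ has one maximal small face $\gs'$ --- to produce a chain map $g:\cf^F(X)\to\cs(N(X))$ equal to $c$ on small far-out simplices; only then do subdivision and Lemma~\ref{l:close maps are homotopic} apply. Note also that uniform contractibility enters not by contracting the cycle $M(\d\gs)$, as you propose, but by contracting the sets $B(\gs)$: the actual homotopies are what make the operators $\bar{H}$, and hence the comparison $D$ with the convex filling, exist at all. That is where contractibility rather than acyclicity is genuinely used, and it is the missing idea in your proposal.
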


\textit{Sketch of the proof:}
As stated before, the main idea of the proof of Proposition~\ref{main lemma} is similar to the Proposition~\ref{G and S}.
To get the map $S$, we first fill simplices in $X$ by singular chains and then use barycentric subdivision on these chains until they fall inside  $\cf^{\mathcal{U}}(X)$. 
Using uniform contractibility of $X$ at infinity, most of this filling process can be done inside $X$  with the necessary control on the support as required by property \eqref{i:mpsupport control}.

But to have property \eqref{i:mpsmall simplex}, we need to fill small simplices by small singular chains.  Unless $X$ is locally acyclic this cannot be achieved by staying inside $X$. 

This is where we use the ambient space $\ell^\infty$.
Since $N(X)$ is an open subset of $\ell^\infty$, we can fill small enough simplices of $X$ in $N(X)$ by taking the convex hull of its vertices.
In summary, we fill `big' simplices  in $X$ and `small' simplices (small ones) in $N(X)$.
  The main difficulty is to choose these fillings in a compatible way so that it gives a chain map $\cf^F(X)\rightarrow \cs(N(X))$: meaning we have to ensure that boundary of the filling is same as filling of the boundary. 
  If $N(X)$ is uniformly contractible, then we can go as before
 by induction on the dimension: to fill in a simplex $\gs$, choose a chain that bounds the filling of $\d \gs$ which is already defined.
  The problem is $N(X)$ might not be even contractible, even if $X$ is uniformly contractible.
   
  To get around this problem, we first construct a chain map $\cf^F(X)\rightarrow \cs(X)$ that sends small simplices to singular chains which can be homotoped to the convex filling by staying inside $N(X)$ (Lemma \ref{thin filling}).
Convex filling of a simplex is the image of that simplex under the following chain map 
\begin{align*}
    c:&\cf(X)\rightarrow \cs(\ell^\infty)\\
    & (x_0,\ldots,x_n)\mapsto  c(\gs):(s_0,\ldots,s_n)\mapsto \sum_{i=0}^ns_i x_i.
\end{align*}
It is in the construction of this chain map $\cff(X)\rightarrow \cs(X)$ where we crucially use the fact that our space is uniformly contractible at infinity, not just uniformly acyclic at infinity.
Figure~\ref{a bad example} illustrates that this construction cannot be performed when $X$ is the Warsaw circle which is an acyclic space.
\begin{figure}
    \centering
    \includegraphics[scale=.45]{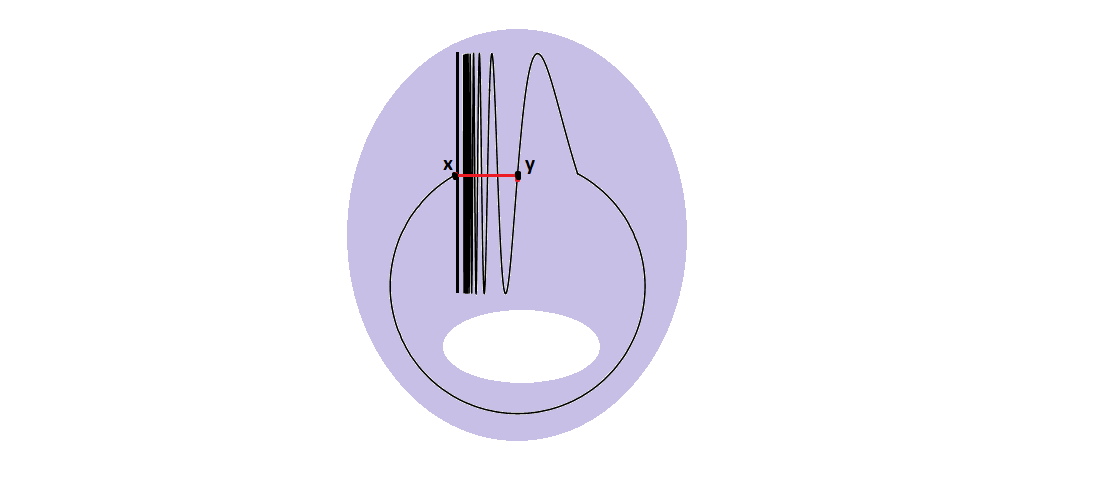}
    \caption{In the above figure, the space $X\subset \rr^2\subset \ell^\infty$ is the Warsaw circle and $N(X)$ is a tubular neighborhood of the grey region in $\ell^\infty$ that deforms retract to the grey region. Red line is the convex filling of the simplex $(x,y)$. Any filling of $(x,y)$ in $X$ has to be around the circular part.
    Hence, there is no homotopy in $N(X)$ between the red convex filling and any filling of $(x,y)$ in $X$. Furthermore, any neighborhood of $x$ contains such a $y$ where this happens.}
    \label{a bad example}
\end{figure}

In the second step, we modify the chain map $\cf^F(X)\rightarrow \cs(X)$ produced in Lemma \ref{thin filling} to get a chain map $\cf^F(X)\rightarrow \cs(N(X))$ that sends small enough simplices to its convex filling. 
The idea here is to glue the filling of small simplices obtained in Lemma~\ref{thin filling} with the associated homotopy of this filling with the convex filling. 
This is done in Lemma~\ref{thin filling 2}.

Finally, to prove Proposition~\ref{main lemma}, we post compose the map $\cf^F(X)\rightarrow \cs(N(X))$  from Lemma \ref{thin filling 2} with a suitable subdivision operator to get the map $S$ into $\cf^\mathcal{U}(X)$ and the homotopy $G$ is obtained by applying Lemma~\ref{l:close maps are homotopic} to $VS:\cf(X)\rightarrow \cf(N(X))$.

\begin{Lemma}\label{thin filling}
Suppose $X\subset \ell^\infty$ is uniformly contractible at infinity. 
Let $N(X)$ be an open neighborhood of $X$ in $\ell^\infty$.
Then there exist two non-decreasing sequence of control functions $\rho_n,\mu_n:\rr_{\geq0}\rightarrow \rr_{\geq0}$, a subcomplex $\cf^F(X)$ of $\cf(X)$ of the following form
\[\cf[n]^F
(X) = \langle \sigma^n \mid  d(\sigma^n; b) > \mu_n(\diam (\sigma^n))\rangle \subset  \cf[n](X),
\]
a chain map $f:\cf^F(X)\rightarrow \cs(X)$ and a map $D:\cf^F(X)\rightarrow \cs[*+1](\ell^\infty)$ with the following properties.
\begin{enumerate}
\item \label{i:mlchain homotopy} $D$ is a chain homotopy between $f$ and the convex filling $c(\gs)$ \[\partial D(\sigma)=f(\sigma)-c(\sigma)-D\partial(\sigma)\]
\item \label{i:mlsupport control} 
$|D(\sigma^n)| \subset N_{\rho_n(\diam(\sigma^n))}(\sigma^n)$ where the tubular neighborhood is taken in $\ell^\infty$.
   
    \item \label{i:mlsmall simplex} For any $k\in \nn$, there exists a bounded set $Z\subset X$ such that for each point $x\in X-Z$, there is a neighborhood $W$ of $x$ in $X$ such that for all $\sigma^n\in W^{n+1}$ with $n\leq k$, $D(\sigma^n)\in \cs[n+1](N(X))$ .
\end{enumerate}
\end{Lemma}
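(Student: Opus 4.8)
The plan is to build $f$ and $D$ simultaneously by induction on the dimension $n$, running the usual ``connect the dots'' filling inside $X$ for the chain map $f$, while using the convexity of $\ell^\infty$ to build the homotopy $D$ onto the convex filling $c$. First I would fix the control data: starting from the $(\mu,\rho)$ witnessing uniform contractibility at infinity, I would choose the increasing sequences $\mu_n,\rho_n$ (as in Lemma~\ref{l:filling}) large enough that whenever $\gs^n\in\cf^F(X)$, the cycle $f(\d\gs^n)$ produced at the previous stage is small and far enough from $b$ to be contractible inside $N_{\rho_n(\diam(\gs^n))}(\gs^n)\cap X$. Setting $f$ to be the identity on vertices, the inductive step fills $f(\d\gs^n)$ by contracting a neighborhood of its support to a point, giving $f(\gs^n)\in\cs(X)$ with $\d f(\gs^n)=f(\d\gs^n)$ and $|f(\gs^n)|\subset N_{\rho_n(\diam(\gs^n))}(\gs^n)$.

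For $D$, I would set $D=0$ on vertices (where $f=c$) and, at the inductive step, consider $w=f(\gs^n)-c(\gs^n)-D\d(\gs^n)$. A direct check using the chain-map property of $f$ and $c$ and the inductive identity for $D\d$ shows $\d w=0$, so $w$ is a cycle; by the support bounds on $f$, $c$, and $D\d(\gs^n)$ it is supported in the convex set $N_{\rho_n(\diam(\gs^n))}(\gs^n)\subset\ell^\infty$. Since $\ell^\infty$ is convex I can fill $w$ by coning to an interior vertex, exactly as in Lemma~\ref{l:close maps are homotopic}, and set $D(\gs^n)$ to be that cone. This yields the chain-homotopy identity \eqref{i:mlchain homotopy} and the support control \eqref{i:mlsupport control} at once, since the cone stays inside the same convex neighborhood.

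The real content is property \eqref{i:mlsmall simplex}, and this is where I expect the main obstacle. For the coning above I am free to replace the naive $\ell^\infty$-filling of $w$ by any filling with controlled support, so for simplices below the cutoff I would instead fill $w$ inside $N(X)$. The point is that uniform contractibility at infinity supplies an actual contraction $H\colon(\text{nbhd of }\gs)\times I\to X$, not merely the vanishing of a homology class. Running $H$ inside $X$ and comparing it, via the straightening (straight-line) homotopy in the convex space $\ell^\infty$, produces a homotopy from $f(\gs^n)$ to $c(\gs^n)$ whose track lies in a small neighborhood of $X$; after enough barycentric subdivision the straightening of each small piece is a tiny convex hull hugging a point of $X$, and all of these lie in $N(X)$ once the simplex is small relative to the thickness of $N(X)$ over the bounded, far-out region in question. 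Concretely: given $k$, discard a bounded $Z$, and for each $x\notin Z$ pick a neighborhood $W$ small enough that, for every $\gs^n\in W^{n+1}$ with $n\le k$, both $c(\gs^n)$ and the subdivided straightening track fit inside a convex ball $B\subset N(X)$; then the resulting filling $D(\gs^n)$ of $w$ stays in $B\subset N(X)$.

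The hard part is precisely that this last step fails for merely uniformly acyclic spaces: as Figure~\ref{a bad example} shows for the Warsaw circle, any $X$-filling $f(\gs)$ of a short $1$-simplex near the bad point must wind around, the cycle $w=f(\gs)-c(\gs)$ is then essential in $N(X)$, and no neighborhood $W$ of $x$ avoids such simplices. Thus the construction must genuinely use the contraction as a map out of a product, so that its track can be pushed onto $c$ through $N(X)$; uniform acyclicity, which only returns a bounding chain with no homotopy attached, is insufficient. The remaining bookkeeping is to check that a single $W=W_x$ works for all dimensions $n\le k$ at once (choose it after fixing the finite amount of subdivision needed through dimension $k$ and a lower bound for the thickness of $N(X)$ over the relevant bounded region), and that the in-$N(X)$ fillings of $w$ for small simplices can be made compatibly with the global $\ell^\infty$-coning used for large simplices, so that $f$ remains a chain map and $D$ a genuine chain homotopy throughout.
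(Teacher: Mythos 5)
Your overall architecture and your key insight do match the paper's proof: $f$ is a ``connect the dots'' filling performed inside $X$, and $D$ is obtained by straightening the actual contracting homotopies through convex combinations in $\ell^\infty$, so that the construction genuinely uses the contraction as a map out of a product rather than a mere bounding chain --- exactly the point that separates uniform contractibility from uniform acyclicity. But there are two genuine gaps in the step carrying all the content, property \eqref{i:mlsmall simplex}. First, your final quantitative claim is false as stated: you cannot choose $W$ so that ``both $c(\gs^n)$ and the subdivided straightening track fit inside a convex ball $B\subset N(X)$.'' Since $X$ is not assumed locally acyclic at infinity, the fillings $f(\gs)$ of arbitrarily small simplices have diameter bounded below (this is precisely the Warsaw--circle phenomenon you yourself cite), so the track of any homotopy from $f(\gs)$ to $c(\gs)$ is large, and the cycle $w=f(\gs)-c(\gs)-D\d(\gs)$ in general bounds in no small ball. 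What is true, and what the paper proves, is weaker but sufficient: every point of $|D(\gs)|$ is a convex combination $\sum_i s_i H_{t_n}(\cdots(H_{t_j}(x_i))\cdots)$ of images of the close-together vertices, hence lies within $r_s(\gs)$ of a fixed compact track inside $X$; for $\gs$ small this long \emph{thin tube hugging $X$} lies in $N(X)$, even though it fits in no small ball. Your earlier sentence (``each small piece is a tiny convex hull hugging a point of $X$'') is the correct statement; your ``concretely'' sentence contradicts it.

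Second, and more seriously, you are missing the uniformization device that makes ``each piece tiny'' achievable with a single neighborhood $W$ of $x$. The transverse size $r_s(\gs)$ is controlled by the moduli of continuity of the contracting homotopies used by the simplices in $W^{n+1}$; if, as in your construction of $f$, each simplex is filled by contracting an ad hoc ``neighborhood of the support of $f(\d\gs)$,'' then infinitely many distinct homotopies occur arbitrarily close to $x$, there is no common modulus of continuity, and no choice of $W$ forces $r_s(\gs)<\epsilon$ simultaneously. The paper resolves this by fixing in advance a locally finite, uniformly bounded open cover $\mathcal{U}$ of $X$, assigning to each $1$-simplex a set $B(\gs)\in\mathcal{U}$ and inductively $B(\gs)=N_{\rho(r)}(\bigcup_{\tau\in|\d\gs|}B(\tau))$, so that for each $x$ the family $\{B(\gs^n)\mid \gs^n\in V_x^{n+1},\ n\le k\}$ is finite; only finitely many composed homotopies are then relevant near $x$, their tracks through $x$ form a compact subset of $X$ around which $N(X)$ has definite thickness, and uniform continuity of these finitely many compositions yields the required $W$. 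Your proposal to ``fix the finite amount of subdivision needed through dimension $k$'' addresses a different finiteness and is no substitute; indeed subdivision is not what makes the pieces hug $X$ --- the $r_s(\gs)$ estimate is. (A further minor slip: $N_{\rho_n(\diam(\gs^n))}(\gs^n)$ is a union of balls, not convex, so you should cone inside its convex hull and enlarge $\rho_n$ accordingly; this is harmless. Your worry about compatibility between the two filling regimes, by contrast, is not a real issue: the inductive identity only requires \emph{some} filling of $w$ with controlled support at each stage.) Without the cover/finiteness mechanism, however, property \eqref{i:mlsmall simplex} does not follow from your argument.
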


\subsection{Cone construction} We will need to use certain cone operators  to define $f$ and $D$ of the above lemma.
Recall that a standard singular $n$-simplex $\Delta^n$ is the convex hull of the $n+1$ unit vectors in  $\rr^{n+1}$.
A singular $n$-simplex in $X$ is a continuous function $\alpha:\Delta^n\rightarrow X$ and the image of $\alpha$ is called the support of the singular simplex.

Let $H_t$ be a homotopy that contracts some set $B\subset \ell^\infty$ to a point in $\ell^\infty$.
We call such homotopy to be a contracting homotopy. 
To $H_t$, we can associate a cone operator $H:\cs(B)\rightarrow \cs[*+1](\ell^\infty)$.
The construction goes as follows. Let $\alpha$ be a singular $n$-simplex supported in $B\subset \ell^\infty$.
Consider the following map $I\times \Delta^n\rightarrow \ell^\infty$ where $I=[0,1]$.
\[(t,(s_0,\ldots,s_n))\mapsto H_t(\alpha(s_0,\ldots,s_n))
\]
Since $H_1$ is a constant map, the above map induces a map from $(I\times \Delta^n)/(\{1\}\times \Delta^n)$ to $\ell^\infty$. 
$(I\times \Delta^n)/(\{1\}\times \Delta^n)$ can be identified with a singular $(n+1)$-simplex.
Hence the above map gives a singular $(n+1)$-simplex and we define $H(\alpha)$ to be this simplex.
The $i^{th}$ face of $H(\alpha)$ corresponds to the above map restricted to the set of points that have zero in their $i^{th}$ coordinates.
When $\dim(\alpha)\geq 1$, one can check that that $\d(H(\alpha))=\alpha-H(\d \alpha)$.

Now, we will define a different cone operator $\bar{H}$ associated to the homotopy $H_t$.
The inputs of $\bar{H}$ will be generated by convex fillings of simplices whose vertices lie in $B$ (we will later extend the domain of $\bar{H}$).
If $\sigma$ is a  0-simplex, we define $\bar{H}(\sigma):=H(\sigma)$.
If $\sigma=(x_0,\ldots,x_n)$ with $n\geq 1$, 
then we consider the following map $I\times \Delta^n\rightarrow \ell^\infty$.
\[(t,(s_0,\ldots,s_n))\mapsto \sum_{i=0}^n s_i H_t(x_i)
\]
The above map restricted to $\{1\}\times \Delta^n$ is a constant map and hence induces a map from
$(I\times \gD^n)/(\{1\}\times \gD^n)$. 
That means we can realize the above map as a singular $(n+1)$-simplex in $\ell^\infty$, which we define to be $\bar{H}(c(\gs))$ (see figure~\ref{fig:barH}).
The $i^{th}$ face of this simplex corresponds to the above map restricted to the set of points whose $i^{th}$ coordinates are zero.
In particular, the $0^{th}$ face is $c(\gs)$ and hence $\bar{H}(c(\gs))$ can be viewed as a cone on $c(\gs)$.

We can extend the domain of $\bar{H}$ to simplices of the form $\bar{G}(c(\gs))$ where  $\bar{G}$ is the associated cone operator to some contracting homotopy $G_s$ such that the composition $H_t\circ G_s$ is defined on the vertices of an $\gs$ for all $[t,s]\in [0,1]^2$. 

If $\gs$ is a $0$-simplex, then we define $\bar{H}(\bar{G}(c(\gs))):=H(G(\gs))$.
If $\gs=(x_0,\ldots, x_n)$ with $n\geq 1$, then we consider the following map from $I^2\times \gD^n$ to $\ell^\infty$.
\[(t,s,(s_0,\ldots,s_n))\mapsto \sum_{i=0}^ns_i H_t(G_s(x_i))
\]
Since $H_1$ and $G_1$ are constant maps, the above map induce a map from two fold cone on $\gD^n$ and hence gives a singular $(n+2)$-simplex.
We define $\bar{H}(\bar{G}(c(\gs)))$ to be this simplex.

We can iterate the above process.
Suppose $\gs=(x_0,\dots,x_n)$ is an $n$-simplex for some $n\in \nn$.
Let $\{H_{t_k}^k,\dots, H_{t_1}^1\}$ be a set of $k$ contracting homotopies such that the domain of the composition $H^k_{t_k}\circ \dots H^1_{t_1}$ contains the vertices of $\gs$.
When $n=0$, we define \[\bar{H}^k(\dots(\bar{H}^1(\gs)))\dots):=H^k(\dots(H^1(\gs)))\dots)\]
For $n\geq 1$, we consider the following map $I^k\times \gD^n\rightarrow \ell^\infty$
\[(t_k,\dots, t_1,(s_0,\dots,s_n))\mapsto \sum_{i=0}^n s_iH^k_{t_k}(\dots(H^1_{t_1}(x_i)))
\]
Since $H^i_1$ is a constant map for each $i\in \{1,2,\dots,k\}$,
 the above map induces a map from $k$-fold cone on $\gD^n$.
So the above map gives a singular $(n+k)$-simplex and we define this simplex to be  $\bar{H}^k(\dots(\bar{H}^1((c(\gs)))\dots)$. 
Again note that, the above map restricted to the set of points whose first coordinates are zero gives the simplex $\bar{H}^{k-1}(\dots(\bar{H}^1(c(\gs)))\dots)$.

Let us now summarize the above discussion. $\bar{H}$ is a linear map defined on a module generated by singular simplices of the form $\bar{H}^k(\dots(\bar{H}^1(c(\gs)))\dots)$
where the domain of the composition $H_t\circ H^k_{t_{k}}\circ\dots \circ H^1_{t_1}$ contains the vertices of $\gs$ for all $(t,t_k.\dots, t_1)\in [0,1]^{1+k}$.

Suppose $\beta=\bar{H}^k(\dots(\bar{H}^1(c(\gs)))\dots)$ is such a singular simplex in the domain of $\bar{H}$. Then $\bar{H}(\beta)$ is the simplex defined by the following map $I^{k+1}\times \gD^n\rightarrow \ell^\infty$
\[(t,t_k,\dots, t_1,(s_0,\dots,s_n))\mapsto \sum_{i=0}^n s_iH_t(H^{k}_{t_{k}}(\dots(H^1_{t_1}(x_i))\dots))
\]
 In what follows, we will blur the distinction between the above map and the corresponding singular simplex for convenience. We will denote both of them by $\bar{H}(\beta)$.
 The following is immediate from the construction.

\begin{Lemma}\label{boundary of barH}
 If $\dim(\beta)\geq 1$, then $\d \bar{H}(\beta)=\beta-\bar{H}(\d \beta)$.
\end{Lemma}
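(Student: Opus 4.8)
The plan is to recognize $\bar{H}$ as a genuine cone operator and then read off the boundary from the standard cone formula, exactly as in the identity $\d H(\alpha)=\alpha-H(\d\alpha)$ already recorded for the ordinary cone operator $H$. Write $\beta=\bar{H}^k(\dots(\bar{H}^1(c(\gs)))\dots)$ as an $m$-simplex with $m=\dim\beta\geq 1$, realized by the map $(t_k,\dots,t_1,(s_0,\dots,s_n))\mapsto\sum_i s_i H^k_{t_k}(\dots H^1_{t_1}(x_i)\dots)$. By construction $\bar{H}(\beta)$ is the $(m+1)$-simplex given by
\[
(t,t_k,\dots,t_1,(s_0,\dots,s_n))\mapsto \sum_{i=0}^n s_i\,H_t\bigl(H^k_{t_k}(\dots H^1_{t_1}(x_i)\dots)\bigr),
\]
and since $H_1$ is constant this is literally a cone on $\beta$ whose apex is the new $0$th vertex: the collapsed slice $\{t=1\}$ is the apex, and $\beta$ sits opposite it as the face $t=0$. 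So the whole statement should reduce to the standard ``boundary of a cone'' identity applied to the base $\beta$.

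First I would verify the two face identities directly from this formula, under the cone identification $(I\times\gD^m)/(\{1\}\times\gD^m)\cong\gD^{m+1}$ used to define $\bar{H}$. The $0$th face is obtained by setting the new parameter $t=0$; since $H_0=\mathrm{id}$, this slice recovers exactly the defining map of $\beta$, so $\d_0\bar{H}(\beta)=\beta$. For $1\leq j\leq m+1$, the $j$th face keeps the apex and drops the $(j-1)$st vertex of the base $\beta$; because the outer homotopy $H_t$ is applied uniformly to every vertex track, coning commutes with passing to a face of the base, giving $\d_j\bar{H}(\beta)=\bar{H}(\d_{j-1}\beta)$.

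Granting these, the conclusion is a one-line bookkeeping computation with $\d=\sum_i(-1)^i\d_i$:
\[
\d\bar{H}(\beta)=\sum_{j=0}^{m+1}(-1)^j\,\d_j\bar{H}(\beta)=\beta+\sum_{j=1}^{m+1}(-1)^j\,\bar{H}(\d_{j-1}\beta)=\beta-\bar{H}(\d\beta),
\]
where the last step reindexes $i=j-1$ and uses linearity of $\bar{H}$. The hypothesis $\dim\beta\geq 1$ is exactly what is needed here: it guarantees that the faces $\d_{j-1}\beta$ are honest simplices on which $\bar{H}$ is defined (so that $\bar{H}(\d\beta)$ makes sense term by term), and it keeps us out of the $0$-simplex case, where $\bar{H}$ is set equal to the ordinary cone $H$ and no base-face cancellation is available.

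The only genuinely delicate point, and the one I would write out carefully, is the side-face identity $\d_j\bar{H}(\beta)=\bar{H}(\d_{j-1}\beta)$. The subtlety is purely combinatorial: $\beta$ is itself an iterated cone, so its vertices split into the base vertices $x_0,\dots,x_n$ and the successive apices coming from $H^1_1,\dots,H^k_1$, and one must check against the explicit parametrization that the boundary operator of $\gD^{m+1}$ interacts with the nested interval-and-simplex coordinates so that dropping a vertex of the base of $\bar{H}(\beta)$ yields the same singular simplex as first dropping the corresponding vertex of $\beta$ and then applying the outer cone $H_t$. This follows from the fact that the collapse $\{1\}\times\gD^m\mapsto\mathrm{pt}$ affects only the apex vertex and that $H_t$ is applied as a single outermost layer, but it requires tracking indices through the identification of the multifold cone with a simplex; this is where all the care goes, while everything else is formal.
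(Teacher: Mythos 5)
Your proof is correct and is exactly the argument the paper has in mind: the paper gives no proof at all, declaring the lemma ``immediate from the construction,'' and your face-by-face verification ($\d_0\bar{H}(\beta)=\beta$ since $H_0=\mathrm{id}$, $\d_j\bar{H}(\beta)=\bar{H}(\d_{j-1}\beta)$ for $1\leq j\leq m+1$, then the alternating-sum bookkeeping) is precisely the standard cone-boundary computation that justifies that assertion. You also correctly flag the one genuinely delicate point (tracking vertex indices through the identification of the multifold cone with a simplex, where the apex of the outermost homotopy becomes the $0$th vertex) and correctly locate why $\dim\beta\geq 1$ is needed, namely that in dimension $0$ the apex contributes a boundary term not accounted for by $\beta-\bar{H}(\d\beta)$.
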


\begin{figure}
    \centering
    \includegraphics[scale=.65]{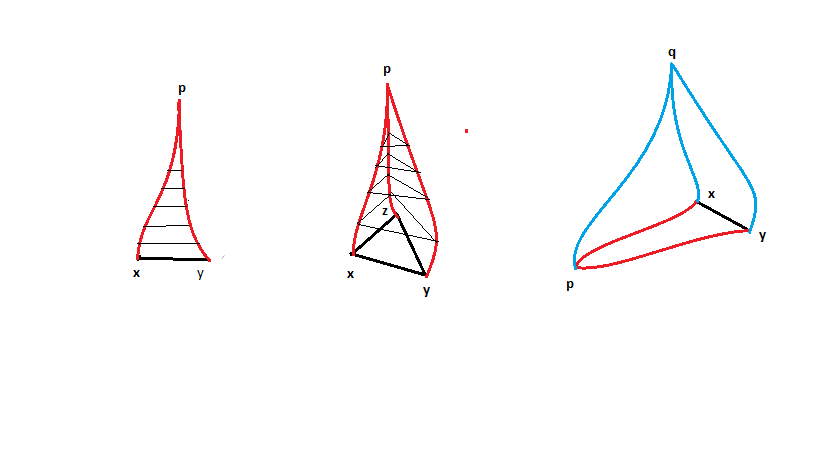}
    \caption{$H_t$ and $G_s$ are two contracting homotopy with $p$ and $q$ being their contracting point respectively. The first two pictures are of $\bar{H}(c(x,y))$ and $\bar{H}(c(x,y,z))$. The third picture is of $\bar{G}(\bar{H}(c(x,y)))$.}
    \label{fig:barH}
\end{figure}

\begin{proof}[\textbf{Proof of \ref{thin filling}}]
Since $X$ is \emph{uniformly contractible at infinity}, there exist two non-decreasing control functions $\rho, \mu:\rr^+\rightarrow \rr^+$ and a basepoint $b\in X$ such that any set $B$ of diameter $r$ is contractible inside a set of diameter $\rho(r)$ if $d(b,B)\geq \mu(r)$.
For each bounded set $B\subset X$ with diameter  $r$ and $d(b,B)\geq \mu(r)$, we fix a homotopy $H^B_t$ that contracts $B$ inside $N_{\rho(r)}(B)$.
Since $X$ is is a metric space it is paracompact. Using paracompactness of $X$, we pick a locally finite open cover $\mathcal{U}=\{U_\alpha\}$ of $X$ such that diameters of all $U_\alpha$ are uniformly bounded.
In particular, for each 1-simplex $(x,y)\in \cup U_\alpha^2$,  the set $S(x,y):=\{\alpha \mid (x,y)\in U^2_\alpha\}$ is finite.
\\

\noindent \textbf{Construction of the complex $\cff(X)$}: We can assume  $\cf[0]^F(X)$ to be $\cf[0](X)$ by letting $\mu_0=0$. For each 1-simplex $\sigma=(x,y)$ in $\cup U_\alpha^2$, we pick an $\alpha$ so that $\gs\in U_\alpha^2$
and we let $B(\sigma):=U_\alpha$.
For other 1-simplices, we let $B(\sigma)$ to be $\{x,y\}$.
Proceeding inductively on dimension, for an $n$-simplex $\sigma$, suppose $B(\tau)$ is already defined for any $\tau\in |\d\gs|$.
Let $r$ be the diameter of $\bigcup_{\tau\in|\d\gs|}B(\tau)$.
We define
\[
B(\sigma)= N_{\rho(r)}(\cup_{\tau\in|\d\gs|}B(\tau)).
\]
We observe that for any $S$, there exist $R$ such that $\diam(B(\sigma))\leq S$ whenever $\diam(\sigma)\leq R$. 
Therefore by uniform contractibility at infinity, we can choose an increasing sequence of functions $\mu_n:[0,\infty)\rightarrow[0,\infty)$ indexed by natural numbers so that $\mu_n\geq R$ for $n\geq 1$ and that $B(\sigma)$ is contractible inside $N_{\rho(\diam(B(\sigma))}(B(\sigma))$ if $d(\sigma^n,b)\geq \mu_n(\diam(\sigma^n))$.
For $n\geq 1$, we let 
\[\cf[n]^F
(X) = \langle \sigma^n \mid  d(\sigma^n; b) > \mu_n(\diam (\sigma^n))\rangle. 
\]
\\

\noindent \textbf{Construction of the chain map $f$}:  We let $f$ to be the identity map on $\cf[0]^F(X)=\cf[0](X)$.
For every $\gs \in \cf^F(X)$, we have an associated homotopy $H_t^{B(\sigma)}$ that contracts $B(\sigma)$ inside $N_{\rho(\diam(B(\sigma))}(B(\sigma))$.
We define $f$ inductively to be the following (see figure~\ref{fig:thinfilling}):
\[
f(\sigma):=\bar{H}^{B(\sigma)}(f(\partial \sigma)).
\]
In order for the above definition to make sense, $f(\d \gs)$ has to be in the domain of $\bar{H}^{B(\gs)}$. 
We can show that by induction on dimension of $\gs$. 
By induction hypothesis, suppose $f(\d \gs)$ is well defined and hence $|f(\d \gs)|$ consists of singular simplices of the form $\bar{H}^{B(\tau)}(\beta)$ where $\tau$ is a codimension one subsimplex of $\gs$.
Note that for any subsimplex $\tau$ of $\gs$, $H_t^{B(\tau)}(B(\tau))\subset N_{\rho(\diam(B(\tau))}B(\tau)\subset B(\gs)$. 
That implies any simplex of the form $\bar{H}^{B(\tau)}(\beta)$ is in the domain of $\bar{H}^{B(\gs)}$ and hence $f$ is well defined.\\

Now we show that $f$ is a chain map by induction.
Notice that  $f|_{\cf[0]^F(X)}$ is the inclusion map $\cf[0]^F(X)\hookrightarrow\cf[0](X)$. 
For a 1-simplex $(x,y)$, we have 
\begin{align*}\partial f((x,y)) &= \partial (\bar{H}^{B((x,y))}(\partial (x,y)))\\
&= \partial (H^{B((x,y))}(y-x))\\
&=\partial (x,y)=f(\partial ((x,y)))
\end{align*}

Inducting  on the dimension of $\sigma$, let us assume  that $f(\partial(\sigma))=\partial(f(\sigma))$ if $1 \leq\dim(\sigma)<n$.
Recall from Lemma~\ref{boundary of barH} that $\partial(\bar{H}^{B(\sigma)}(\alpha))=\alpha- \bar{H}^{B(\sigma)}(\partial \alpha)$ if $\dim(\alpha)\geq 1$.
If $\dim(\gs)\geq 2$, then we have the following.
\begin{align*}\partial f(\sigma)&=\partial \bar{H}^{B(\sigma)}(f(\partial(\sigma)))\\
&=f(\partial(\sigma))-\bar{H}^{B(\sigma)}(\d f(\partial(\sigma))) \quad [\text{by Lemma \ref{boundary of barH}}]\\
&=f(\partial(\sigma)) \end{align*} 
\\
\noindent\textbf{Construction of the chain homotopy $D$}:  
We first let $D_0$ to be trivial on $\cf[0]^F(X)$.
We then define $D_*(\sigma)$ inductively  to be the following.
\[D_*(\sigma):=-\bar{H}^{B(\sigma)}(D_*(\partial \sigma)) - \bar{H}^{B(\sigma)}(c(\sigma))
\]
In order for the above definition to make sense, we need to check that $D_*(\d\gs)$ does belong to the domain of the $\bar{H}^{B(\gs)}$.
The proof of this is same as the  well definedness of $f$, so we will skip it.
(See \ref{fig:thinfilling} for an  illustration of  $f$ and $D$ applied to 1 and 2-dimensional simplices.)
\begin{figure}
    \centering
    \includegraphics[scale=.8]{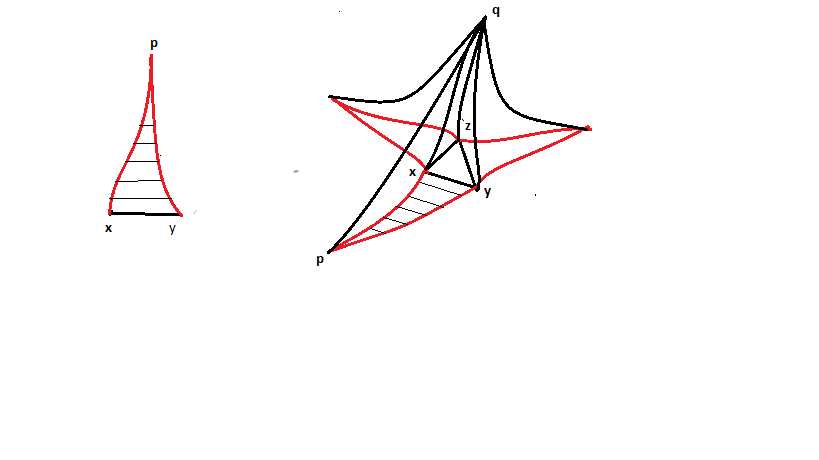}
    \caption{In the left picture, the straight line between $x$ and $y$ is $c((x,y))$ and the red  singular 1-simplices give the filling $f((x,y))=\bar{H}^{B((x,y))}(\partial(x,y))$ where $H^{B((x,y)}_t$ contracts the set $B((x,y))$ to the point $p$. The striped region is $D((x,y))=\bar{H}^{B(x,y)}(c((x,y)))$.
    The picture on the right is the support of $\bar{H}^{B((x,y,z))}(D(\partial((x,y,z)))$ which is made of four singular 3-simplices.
    The one in the center is $\bar{H}^{B(x,y,z)}((x,y,z))$. Other three belong to the support of $\bar{H}^{B((x,y,z))}(D(\partial((x,y,z))))$.
    }
    \label{fig:thinfilling}
    
\end{figure}\\

 \textbf{Proof of \eqref{i:mlchain homotopy}}: This can be proved by induction on the dimension of $\sigma$.
Since $D_0$ is trivial, the claim is true for the base case $n=1$.
For a $1$-simplex $(x,y)$, we have \begin{align*}\partial D((x,y))=-\partial \bar{H}^{B((x,y))}(c((x,y)))&=-c((x,y))+\bar{H}^{B((x,y))}(\partial(c(x,y)))\\&=-c((x,y))+\bar{H}^{B((x,y))}(\partial(x,y))\\&=-c((x,y))+f((x,y))\end{align*}

Assume that $\partial D_*(\sigma)=f(\sigma) - c(\sigma)- D_*(\partial \sigma)$ if $\dim(\sigma)<n$ where $n\geq 2$.
Suppose $\gs$ is an $n$-simplex with $n\geq 2$. Then we have 
\begin{align*}
    &\partial D_*(\sigma)\\
    &=-\partial \bar{H}^{B(\sigma)}(D_*(\partial (\sigma)))-\partial \bar{H}^{B(\sigma)}(c(\sigma))\\
    &=\bar{H}^{B(\sigma)}(\partial(D_*(\partial \sigma)))-D_*(\partial(\sigma))-\partial \bar{H}^{B(\sigma)}(c(\sigma)) && [\text{by Lemma \ref{boundary of barH}}]\\
    &= \bar{H}^{B(\sigma)}(f(\partial(\sigma)))-\bar{H}^{B(\sigma)}(c(\partial \sigma)))-\bar{H}^{B(\sigma)}(D_*(\partial^2(\sigma)))\\
    &\hspace{3cm}-D_*\partial(\sigma)+\bar{H}^{B(\sigma)}(\partial(c(\sigma)))- c(\sigma)
    &&[\text{by induction hypothesis}]\\
    &=\bar{H}^{B(\sigma)}(f(\partial(\sigma)))- c(\sigma)-D_*(\partial(\sigma)) \\
    &=f(\sigma)-c(\sigma)-D_*\partial(\sigma)
\end{align*}
\newline

\noindent \textbf{Proof of \eqref{i:mlsupport control}}: Note that vertices of simplices of $|f(\gs)|$ and $|D(\gs)|$ live in $N_{\rho(\diam(B(\gs))}(B(\gs))$.
As observed earlier, $\diam(B(\gs))$ depends only on $\dim(\gs)$ and $\diam(\gs)$.
That means, we can find a non-decreasing sequence of function $\rho_n:[0,\infty)\rightarrow [0,\infty)$ such that $|f(\gs^n)|\leq \rho_n(\diam(\gs^n))$ and $|D(\gs^n)|\leq \rho_n(\diam(\gs^n))$.
 The claim follows.\\

\noindent \noindent \textbf{Proof of \eqref{i:mlsmall simplex}}:
To prove  \eqref{i:mlsmall simplex}, we will take a closer look at the support of $D(\gs)$.
Consider the following set
\[\mathcal{P}(\gs):=\{(\gs^n,\gs^{n-1},\dots, \gs^j)\mid \gs^j\subset \gs^{j+1}\subset\cdots \gs^{n-1}\subset \gs^n=\gs, 1\leq j\leq n\}
\]

For each  $s=(\gs^n,\dots,\gs^j)\in \mathcal{P}(\gs)$ and $\gs^j=(x_0,\dots,x_j)$, consider the following simplex
\[D_s(\gs):=\bar{H}^{B(\sigma^n)}(\bar{H}^{B(\sigma^{n-1})}(\dots (\bar{H}^{B(\sigma^j)}(c(\gs^j)))\dots)).
\]
We can check that 
\begin{align*}\label{support of D}
    |D(\gs)|=\cup_{s\in \mathcal{P}(\gs)} |D_s(\gs)|
\end{align*}

If $s=(\gs^n,\dots,\gs^j)\in \mathcal{P}(\gs)$ and $\gs^j=(x_0,\dots,x_j)$, then $D_s(\gs)$ is supported in the following set
\[\{\sum_{i=0}^{j} s_i \cdot H_{t_n}^{B(\sigma^n)}(H_{t_{n-1}}^{B(\sigma^{n-1})}(.. (H_{t_j}^{B(\sigma^j)}(x_i)))..) \} \mid  \sum_{i=0}^j s_i=1, (t_n,\dots,t_j)\in[0,1]^{n-j+1}\}
\]

Let 
\[r_s(\gs)=\max_{(t_n,\dots,t_j)\in [0,1]^{n-j+1}}\{\diam H_{t_n}^{B(\sigma^n)}(H_{t_{n-1}}^{B(\sigma^{n-1})}(.. (H_{t_j}^{B(\sigma^j)}\{v(\gs^j)\})\}\},\]

where $v(\gs^j)$ denotes the set of  vertices of $\gs^j$.
Since all the homotopies $H_t^B$ take place in $X$, it follows that $|D_s(\gs)|\subset N_{r_s(\gs)}(X)$.

Therefore to prove~\eqref{i:mlsmall simplex}, it is enough to show that there is a bounded set $Z$ so that for any $\epsilon>0$ and $k\in \nn$, we can choose small neighborhood $W_x$ of $x\notin Z$ so that for any simplex $\gs\in W_x^{n+1}$, $n\leq k$, $s\in \mathcal{P}(\gs)$, we have $r_s(\gs)< \epsilon$.

Recall that we picked a locally finite  cover $\mathcal{U}$ of $X$ at the beginning of the proof. 
Hence there are only finitely many $U_\alpha\in \mathcal{U}$ containing any given $x\in X$. 
Let $V_x$ be the intersection of those finitely many $U_\alpha$.
Then for any 1-simplex $\gs\in V^2_x$, $B(\gs)$ is one of such $B(U_\alpha)$ by construction. 
In particular $\#\{B(\sigma)\mid \sigma\in V_x^2\}$ is finite.
Note that for any simplex $\sigma$, $B(\sigma)$ is determined by $\cup_{\tau\in S}B(\tau)$ and $\dim(\sigma)$, where $S$ is the set of 1-subsimplices of $\sigma$.
This implies that,
for any given $k\in \nn$, the set
$\{B(\sigma^n)\mid \sigma^n\in V_x^{n+1}, n\leq k\}$ is finite.

This implies that for any $\epsilon>0$, we can take small enough neighborhood $W_x \subset V_x$ of every $x\notin Z$,
such that $\diam\{H_{t_n}^{B(\sigma^n)} (H_{t_{n-1}}^{B(\sigma^{n-1})}..( H_{t_j}^{B(\sigma^j)}(W_x))..)\}<\epsilon$ for all $\sigma\in W_x^{n+1}$, for all $(\gs^n,\dots,\gs^j)\in \mathcal{P}(\gs)$, $(t_n,\ldots,t_j)\in [0,1]^{n-j+1}$ and $n\leq k$.
That implies $r_s(\gs)<\epsilon$ for all $s\in \mathcal{P}(\gs)$, $\sigma\in W_x^{n+1}$, $n\leq k$,  and $x\notin Z$.
That finishes the proof.

\end{proof}

In the next lemma we use the map $f$ and $D$ from   Lemma \ref{thin filling} to obtain a filling $\cff(X)\rightarrow \cs(N(X))$ that sends small enough simplices to their convex fillings, in particular to  small singular simplices.
\begin{Lemma}\label{thin filling 2}
Suppose $X\subset \ell^\infty$ is uniformly contractible at infinity. 
Let $N(X)$ be an open neighborhood of $X$ in $\ell^\infty$.
Then there exists two non-decreasing sequence of control functions $\rho_n,\mu_n:\rr_{\geq0}\rightarrow \rr_{\geq0}$ and a subcomplex $\cf^F(X)$ of $\cf(X)$ of the following form
\[\cf[n]^F
(X) = \langle \sigma^n \mid  d(\sigma^n; b) > \mu_n(\diam (\sigma^n))\rangle \subset  \cf[n](X),
\]
a chain map $g:\cf^F(X)\rightarrow \cs(N(X))$ with the following properties.
\begin{enumerate}
\item \label{|g|}$|g(\sigma^n)|\subset N_{\rho_n(\diam(\sigma^n))}(\gs^n)$ where the  tubular neighborhood is taken in $\ell^\infty$.
    \item \label{small for g} There exists a bounded  set $Z\subset X$  such that for each $x\in X-Z$, there is an open neighborhood $W_x$ of $x$ in $X$ such that  $g(\sigma)=c(\sigma)$ for all $\sigma\in W_x^{n+1}$ with $n\leq k$.
\end{enumerate}

\end{Lemma}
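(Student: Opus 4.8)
The plan is to keep the subcomplex $\cf^F(X)$, the control functions $\mu_n$, and the maps $f$ and $D$ produced by Lemma~\ref{thin filling}, and to assemble $g$ out of $f$ and a \emph{truncated} copy of the homotopy $D$. I would define a degree $+1$ operator $\tilde D\colon \cf^F(X)\to \cs[*+1](N(X))$ on generators by
\[
\tilde D(\sigma)=\begin{cases} D(\sigma) & \text{if } |D(\sigma)|\subset N(X),\\ 0 & \text{otherwise,}\end{cases}
\]
so that $\tilde D$ lands in $\cs[*+1](N(X))$ by design, and set $g:=f-\d\tilde D-\tilde D\d$. The first thing I would record is that $g$ is a chain map for \emph{any} choice of degree $+1$ operator $\tilde D$: the operator $\d\tilde D+\tilde D\d$ commutes with $\d$ (both $\d(\d\tilde D+\tilde D\d)$ and $(\d\tilde D+\tilde D\d)\d$ equal $\d\tilde D\d$), hence is itself a chain map, and $g$ is then the difference of the chain maps $f$ and $\d\tilde D+\tilde D\d$. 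This is exactly what makes truncating $D$ harmless: keeping only those fillings of $D$ that already sit inside $N(X)$ cannot destroy the chain-map property.

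Next I would check that $g$ takes values in $\cs(N(X))$ and obeys the support estimate \eqref{|g|}. For the target: $f(\sigma)\in\cs(X)\subset\cs(N(X))$ by Lemma~\ref{thin filling}, while $\d\tilde D(\sigma)$ and $\tilde D\d\sigma$ lie in $\cs(N(X))$ because every value of $\tilde D$ does; hence $g(\sigma)\in\cs(N(X))$. For the support bound I would use $|\d\tilde D(\sigma)|\subset|\tilde D(\sigma)|\subset|D(\sigma)|$ together with $|\tilde D(\d_i\sigma)|\subset|D(\d_i\sigma)|$, combined with the estimates established in the proof of Lemma~\ref{thin filling}, which bounds both $|f(\sigma^n)|$ and $|D(\sigma^n)|$ by $N_{\rho_n(\diam\sigma^n)}(\sigma^n)$. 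Since $\diam(\d_i\sigma)\le\diam\sigma$, a face of $\sigma$ is contained in $\sigma$, and $(\rho_n)$ is non-decreasing in $n$, all three contributions to $g(\sigma^n)$ sit inside $N_{\rho_n(\diam\sigma^n)}(\sigma^n)$, which gives \eqref{|g|}.

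The real content, and the step I expect to be the main obstacle, is property \eqref{small for g}: that $g$ agrees with the convex filling on small simplices. Here the danger is that truncation could corrupt $g$ on a small simplex through one of its faces, so the crucial point is that smallness is closed under passing to faces. Fix $k$ and let $Z=Z_k$ and $\{W_x\}_{x\in X-Z}$ be the bounded set and neighborhoods supplied by Lemma~\ref{thin filling}\eqref{i:mlsmall simplex}. For $x\in X-Z$ and $\sigma\in W_x^{n+1}$ with $n\le k$, every face $\tau$ of $\sigma$ (in every dimension $\le n$) again lies in $W_x^{\dim\tau+1}$ with $\dim\tau\le k$, so Lemma~\ref{thin filling}\eqref{i:mlsmall simplex} gives $|D(\tau)|\subset N(X)$, and therefore $\tilde D(\tau)=D(\tau)$ for $\sigma$ and all of its faces simultaneously. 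On such a $\sigma$ the truncation is invisible and the defining formula for $g$ collapses:
\[
g(\sigma)=f(\sigma)-\d D(\sigma)-D\d\sigma=c(\sigma),
\]
where the last equality is the chain-homotopy identity of Lemma~\ref{thin filling}\eqref{i:mlchain homotopy}. Thus $g=c$ on all simplices from $W_x^{n+1}$ with $n\le k$, as required. The essential input throughout is that Lemma~\ref{thin filling}\eqref{i:mlsmall simplex} controls all faces of a small simplex at once, so that replacing $D$ by its truncation $\tilde D$ changes nothing precisely on the region where we need $g$ to equal the convex filling.
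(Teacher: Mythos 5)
Your proof is correct, and it takes a genuinely different route from the paper's. The paper builds $g$ by an explicit case analysis: $g(\sigma)=c(\sigma)$ on small simplices, $g(\sigma)=f(\sigma)-[\sigma:\sigma']D(\sigma')$ when exactly one maximal face $\sigma'$ is small (verified by a direct incidence-number computation $[\sigma:\sigma'][\sigma':\beta]=-[\sigma:\tau][\tau:\beta]$), and in the general case an inductive geometric gluing in which $g(\partial\sigma)$ is deformed along the collars $D(\sigma')$ onto $f(\partial\sigma)$ and then capped off by $f(\sigma)$ --- a step the paper only sketches. Your truncation trick replaces all of this with the single uniform formula $g=f-\partial\tilde D-\tilde D\partial$, where $\tilde D(\sigma)=D(\sigma)$ exactly when $|D(\sigma)|\subset N(X)$. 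The chain-map property is then formal (as you note, $\partial\tilde D+\tilde D\partial$ commutes with $\partial$ for \emph{any} degree $+1$ operator), the target and support estimates are inherited termwise from Lemma~\ref{thin filling}, and property~\eqref{small for g} follows because smallness passes to faces, so on $W_x^{n+1}$ the truncation is invisible and the homotopy identity of Lemma~\ref{thin filling}\eqref{i:mlchain homotopy} collapses the formula to $c(\sigma)$. What your approach buys is rigor and economy: no sign bookkeeping, no vague deformation-retract induction, and a $g$ that is manifestly chain homotopic to $f$ via $-\tilde D$; what the paper's approach buys is a geometrically explicit $g$ whose simplices one can draw (Figure~\ref{fig:repairfilling}), though nothing downstream in Proposition~\ref{main lemma} uses more than the chain-map property together with \eqref{|g|} and \eqref{small for g}, which you establish. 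Two small points you use implicitly and could flag: faces of generators of $\cff(X)$ are again generators (since the $\mu_n$ are increasing, as the paper notes when asserting $\cff(X)$ is a subcomplex), which is needed for $\tilde D\partial$ to be defined; and the bounded set $Z$ in \eqref{small for g} depends on the fixed $k$, exactly as in Lemma~\ref{thin filling}\eqref{i:mlsmall simplex}.
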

\begin{proof}

Let $f$, $D$, $Z$ and $W_x$ be as in the Lemma~\ref{thin filling}.
We define $g(\sigma):=c(\sigma)$  if $\sigma$ is  supported in $W_x$ for all $x\in X-Z$.
If a $i$-simplex $\sigma^i$ is not in $W_x^{i+1}$ for any $x$ but there is a $(i-1)$-subsimplex $\sigma'$ of $\sigma$ that lives in some $W^{i}_x$, then we define $g( \sigma):=f(\sigma)-[\sigma:\sigma']D(\sigma')$ (see figure \ref{fig:repairfilling}), where $[\gs:\gs']=\pm 1$ tells us whether orientations of $\gs$ and $\gs'$ agree or not.
For such $\sigma$, we will now check that $g(\partial \sigma)=\partial g(\sigma)$. For that we will use the fact that $D$ is a chain homotopy between $f$ and $c$. We first observe the following.
\begin{align*}
    \partial g(\sigma)&=\partial f(\sigma)-[\sigma:\sigma']\partial D(\sigma')\\
    &=f(\partial \sigma)+[\sigma:\sigma']D(\partial \sigma')-[\sigma:\sigma']f(\sigma')+[\sigma:\sigma']c(\sigma')\\
    \end{align*}
    Now we will show that the above is the same as $g\partial(\sigma)$. 
    First we observe that any $(i-1)$-subsimplex $\tau\neq \sigma'$  of $\sigma$ has a maximal $(i-2)$-subsimplex $\tau'$ supported in $V$.
    Hence for such $\tau$, we have $g(\tau)=f(\tau)-[\tau:\tau']D(\tau')$.
    We use this to obtain the following
    \begin{align*}
    g(\partial(\sigma))&=\sum_{\substack{\tau\in \partial \sigma\\\tau\neq \sigma'}}[\sigma:\tau]g(\tau)+[\sigma:\sigma']g(\sigma')\\
    &=\sum_{\substack{\tau\in \partial \sigma\\\tau\neq \sigma'}}[\sigma:\tau]g(\tau)+[\sigma:\sigma']c(\sigma')\\
    &=\sum_{\substack{\tau\in \partial \sigma\\\tau\neq \sigma'}}[\sigma:\tau]f(\tau)-\sum_{\substack{\tau\in \partial \sigma\\\tau\neq \sigma'}}[\sigma:\tau][\tau:\tau']D(\tau')+[\sigma:\sigma']c(\sigma')\\
    &=f(\partial \sigma)-[\sigma:\sigma']f(\sigma')-\sum_{\substack{\tau\in \partial \sigma\\\tau\neq \sigma'}}[\sigma:\tau][\tau:\tau']D(\tau')+[\sigma:\sigma']c(\sigma')
\end{align*}
To get $\partial g(\sigma)=g(\partial \sigma)$, it is now enough to show that 
\[[\sigma:\sigma']D(\partial \sigma')=-\sum_{\substack{\tau\in \partial \sigma\\\tau\neq \sigma'}}[\sigma:\tau][\tau:\tau']D(\tau')
\]
For that, we need to show that for a common $(i-2)$-subsimplex $\beta$ of $\sigma'$ and $\tau'$, $[\sigma:\sigma'][\sigma':\beta]=-[\sigma:\tau][\tau:\beta]$.
This is true because  coefficient of $\beta$ in $\partial (\partial \sigma)$ is $[\sigma:\sigma'][\sigma':\beta]+[\sigma:\tau][\tau:\beta]$  and so this has to be zero.

In other cases, an $i$-simplex $\sigma$ has some maximal subsimplices $\{\sigma'\}$ in $W_x$ for some $x$ with dimension less than $i-1$.
In this case, by induction on dimension, we can see that   $g(\partial \sigma)$ deforms retract to $f(\partial \sigma)=\partial (f(\sigma))$ by deforming along $D(\sigma')$ for each maximal subsimplex $\sigma'$ supported in $V$.
Then  we  use $f(\sigma)$ to fill this deformed $g(\partial \sigma)$. This process gives a cell whose boundary is $g(\partial \gs)$ and we define $g(\gs)$ to be that simplex.

Since $|f(\sigma^n)|\subset N_{\rho_n(\diam(\sigma^n))}(\sigma^n)$ and $|D(\sigma^n)| \subset N_{\rho_n(\diam(\sigma^n))}(\sigma^n)$ by Lemma~\ref{thin filling}, 
it follows that $|g(\sigma^n)|\subset N_{\rho_n(\diam(\sigma^n))}(\gs^n)$ giving us property~\eqref{|g|}.

 Finally, property~\eqref{small for g} follows directly from the construction of $g$.
\end{proof}
\begin{figure}
    \centering
    \includegraphics[scale=.8]{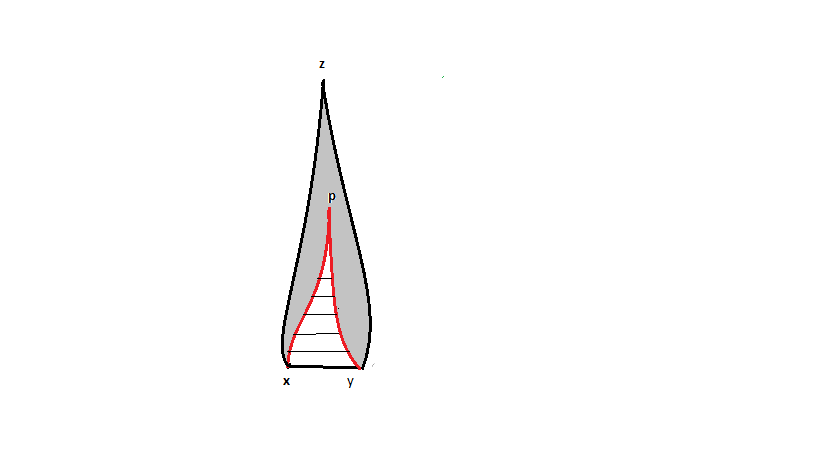}
    \caption{This is a picture of $g(\gs)$ where $\gs=(x,y,z)$.
    Here $p$ is the contracting point of the homotopy $H^{B((x,y))}$.
    The grey part is $f(\gs)$ and striped part is $D(\gs')$ where $\gs'=(x,y)$}
    \label{fig:repairfilling}
\end{figure}

We are now ready to prove~\ref{main lemma}. For the convenience of the reader, we recall the statement first.
\begin{Prop}\label{p:main prop}
 Suppose $X\subset \ell^\infty$ is  uniformly contractible  at infinity. 
 Let $N(X)$ be an open neighborhood of $X$ in $\ell^\infty$. Let $\mathcal{U}$ be an open cover of $N(X)$.
 Then there exist two non-decreasing sequence of control functions $\rho_n, \mu_n:\rr_{\geq 0}\rightarrow \rr_{\geq0}$ and a subcomplex $\cf^F(X)$ of $\cf(X)$ of the following form
\[\cf[n]^F
(X) = \langle \sigma^n \mid  d(\sigma^n; b) > \mu_n(\diam (\sigma^n))\rangle \subset  \cf[n](X),
\]
 a chain map $S:\cf^F(X)\rightarrow \cf^{\mathcal{U}}(N(X))$ and a map $G:\cf^F(X)\rightarrow \cf[*+1](N(X))$  with the following properties.
     \begin{enumerate}
    
    \item \label{i:G} $G$ is a chain homotopy between $VS$ and the inclusion map $i:\cf^F(X)\hookrightarrow\cf(N(X))$. 
    \[\d G(\gs)=VS(\gs)-i(\gs)-G\d(\gs)\]
    \item \label{|G|}
     $|G(\sigma^n)|\subset N_{\rho_n(\diam(\sigma^n))}(\sigma^n)$.
        \item\label{small}For any $k\in \nn$, there exists a bounded set $B\subset X$ such that for any $x\in X-B$ and a neighborhood $U$ of $x$ in $\ell^\infty$, there is a neighborhood $W$ of $x$ in $X$ such that $|G(\sigma^n)|\subset U^{n+2}$ for all $\sigma^n\in W^{n+1}$ if $n\leq k$.
    \end{enumerate}
\end{Prop}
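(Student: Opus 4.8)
The plan is to assemble the proposition from the filling $g$ of Lemma~\ref{thin filling 2} together with a support-preserving subdivision operator, following the same template as the proof of Proposition~\ref{G and S}: there the chain map $M$ was straightened to the inclusion by Lemma~\ref{l:close maps are homotopic}, and here $g$ will play the role of $M$. First I would invoke Lemma~\ref{thin filling 2} to fix the control functions $\rho_n,\mu_n$, the subcomplex $\cf^F(X)$, and the chain map $g:\cf^F(X)\to\cs(N(X))$ satisfying the support bound \eqref{|g|} and the small-simplex property \eqref{small for g}, namely a bounded set $Z\subset X$ with, for each $x\in X-Z$, a neighborhood $W_x$ on which $g$ agrees with the convex filling $c$ through degree $k$.

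Next I would choose a barycentric subdivision operator $\sd:\cs(N(X))\to\csu(N(X))$ carrying singular chains into the $\mathcal{U}$-small ones. The standard iterated-subdivision-to-small-simplices construction yields a chain map that never enlarges supports, so $|\sd(\alpha)|\subset|\alpha|$ for every singular chain $\alpha$, and $\sd$ fixes chains that are already $\mathcal{U}$-small. Setting $S:=\sd\circ g$ gives a chain map $\cf^F(X)\to\csu(N(X))$ inheriting the bound $|S(\sigma^n)|\subset N_{\rho_n(\diam(\sigma^n))}(\sigma^n)$ from \eqref{|g|}. Postcomposing with the forgetful map $V$ produces $VS:\cf^F(X)\to\cf(N(X))$, whose image is a $\mathcal{U}$-small vertex chain and which satisfies the same support bound; thus $VS$ is close to the inclusion $i:\cf^F(X)\hookrightarrow\cf(N(X))$ in the sense of Lemma~\ref{l:close maps are homotopic}. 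Applying that lemma to $VS$ and $i$ in the ambient space $N(X)$ produces the chain homotopy $G$, which is property \eqref{i:G} (up to the conventional sign on $i-VS$), together with $|G(\sigma^n)|\subset N_{\rho_n(\diam(\sigma^n))}(\sigma^n)$, which is property \eqref{|G|} after folding the finitely many neighborhood constants into a single non-decreasing $\rho_n$.

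The step I expect to require the most care is the small-simplex control \eqref{small}, and this is exactly where the work done in Lemma~\ref{thin filling 2} is used. Fix $k$, set $B:=Z$, take $x\in X-Z$ and an arbitrary neighborhood $U$ of $x$ in $\ell^\infty$. Shrinking $W_x$ to a neighborhood $W\subset W_x$ small enough that $c(\sigma)\subset U$ for every $\sigma\in W^{n+1}$ with $n\le k$, property \eqref{small for g} gives $g(\sigma)=c(\sigma)$, so $S(\sigma)=\sd(c(\sigma))$ is supported inside $|c(\sigma)|\subset U$ because subdivision does not enlarge support; hence $VS(\sigma)$ and $i(\sigma)=\sigma$ both lie in $U$. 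Tracing the cone construction of $G$ in Lemma~\ref{l:close maps are homotopic}, whose cone vertices are drawn from the chains $i(\sigma)-VS(\sigma)-G\partial(\sigma)$, an induction on dimension shows these remain supported in $U$, so $|G(\sigma^n)|\subset U^{n+2}$, giving \eqref{small}. The crucial point is that $g$ sends small simplices to their genuinely small convex fillings rather than to fillings forced to wander through $X$; as Figure~\ref{a bad example} indicates, it is precisely this feature, available only because $X$ is uniformly contractible rather than merely uniformly acyclic at infinity, that makes \eqref{small} attainable. The genuine difficulty of the whole argument has therefore been front-loaded into Lemmas~\ref{thin filling} and \ref{thin filling 2}, and the remaining obstacle here is the bookkeeping that verifies the small-simplex property survives both the subdivision and the straightening homotopy.
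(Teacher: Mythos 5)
Your proposal is correct and follows essentially the same route as the paper's own proof: take $g$ from Lemma~\ref{thin filling 2}, postcompose with barycentric subdivision to obtain $S$, straighten $VS$ to the inclusion via Lemma~\ref{l:close maps are homotopic} to get $G$ with the support bound, and deduce property~\eqref{small} from property~\eqref{small for g}. Your explicit induction through the cone construction merely fills in the step the paper dismisses as following ``directly,'' and it is a valid verification.
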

\begin{proof}
Let us first define the maps $S$ and $G$.
We take the filling $g:\cf^F(X)\rightarrow \cs(N(X))$ from the Lemma~\ref{thin filling 2} and then apply barycentric subdivision on $g$ to define $S:\cf^F(X)\rightarrow \cf^\mathcal{U}(N(X))$. 
Since $|g(\gs^n)|\subset N_{\rho_n(\diam(\gs^n))}(\gs^n)$ by Lemma~\ref{thin filling 2}, it follows that $|S(\gs^n)|\subset N_{\rho_n(\diam(\gs))}(\gs^n)$ and consequently $|VS(\gs^n)|\subset N_{\rho_n(\diam(\gs))}(\gs^n)$.
Applying the Lemma~\ref{l:close maps are homotopic} to the map $VS$, we get the homotopy $G$ between $VS$ and the inclusion map $i:\cff(X)\rightarrow \cf(N(X))$ with property \eqref{|G|}.
Property~\eqref{small}   follows directly from the property~\eqref{small for g}  of the map $g$ from Lemma~\ref{thin filling 2}.
\end{proof}

Recall that the Proposition~\ref{G and S} was the main ingredient to prove the Theorem~\ref{warm up theorem}.
Likewise
Proposition~\ref{p:main prop} is `almost' enough to prove Theorem~\ref{main theorem}. 
The only caveat is that the maps $S$ and $G$ in Proposition~\ref{p:main prop} take images in $\cf(N(X))$ instead of $\cf(X)$ (compare with Proposition~\ref{G and S}).
Fortunately, thin enough neighborhoods of $X$ are as good as $X$ from the point of view of boundedly supported cohomology.
More precisely, we will show (in Proposition \ref{proj map and nbd}) that $\Hb(X)$ is taut in the sense that any class in $\Hb(X)$ is a restriction of a class from $\Hb(N(X))$ for some neighborhood $N(X)$ of $X$ inside $\ell^\infty$ .

Let $\mathcal{U}$ be a collection of open sets in $X$ and $A$ be a subset of $X$.
The \emph{star of $A$ with respect to $\mathcal{U}$}, denoted by $st(A,\mathcal{U})$, is defined to be the union of those elements of $\mathcal{U}$ whose intersection with $A$ is nonempty.
An open covering of $A$ in $X$ is a collection $\mathcal{U}$ of open sets of $X$ such that $A\subset st(A, \mathcal{U})$.
We now state the following lemma from \cite{AS} which roughly says that there is a projection map from $st(X,\mathcal{V})$ to $V$ that does not move close points too far apart.
\begin{Lemma}[\cite{AS}]\label{tautness criterion}
    If $X\subset \ell^\infty$, then for every open covering $\mathcal{U}$ of $X$ in $\ell^\infty$, there is an open covering $\mathcal{V}$ of $X$ in $\ell^\infty$ and a function $f:st(X,\mathcal{V})\rightarrow X$ such that
    \begin{enumerate}
        \item $f(x)=x$ for all $x\in X$
        \item For each $V\in \mathcal{V}$ with $V\cap X \neq \emptyset$ there is a $U\in \mathcal{U}$ such that $V\cup f(V)\subset U$.
    \end{enumerate}
    
\end{Lemma}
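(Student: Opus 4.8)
The plan is to reduce the entire statement to a single $1$-Lipschitz ``covering radius'' function on $\ell^\infty$ together with a crude pointwise selection into $X$. The key observation is that $f$ is required only to be a function, not a continuous map, so no extension or averaging argument is needed; all of the content is then concentrated in one two-sided Lipschitz estimate that controls how far the selection moves points.

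First I would set, for $p\in\ell^\infty$,
\[
g(p)=\sup\{\,r\geq 0 \mid B(p,r)\subset U\text{ for some }U\in\mathcal U\,\}.
\]
This function is $1$-Lipschitz, since $B(p,r)\subset U$ and $d(p,q)<s$ give $B(q,r-s)\subset U$ and hence $g(q)\geq g(p)-d(p,q)$; moreover it is strictly positive on the open set $\bigcup\mathcal U\supset X$. For each $x\in X$ I would fix $U_x\in\mathcal U$ with $B(x,\tfrac34 g(x))\subset U_x$, which exists by the definition of the supremum, and then declare
\[
\mathcal V=\{\,B(x,\tfrac14 g(x))\mid x\in X\,\}.
\]
Every element of $\mathcal V$ is open in $\ell^\infty$ and contains its centre $x\in X$, so $\mathcal V$ is an open covering of $X$ in $\ell^\infty$ with $st(X,\mathcal V)=\bigcup_{x\in X}B(x,\tfrac14 g(x))\subset\bigcup\mathcal U$, a set on which $g>0$.

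The heart of the argument is the following comparison. For $p\in st(X,\mathcal V)$ set $S(p)=\{x\in X\mid d(p,x)<\tfrac14 g(x)\}$, which is nonempty by the description of $st(X,\mathcal V)$. Combining $d(p,x)<\tfrac14 g(x)$ with the Lipschitz bound $|g(x)-g(p)|\leq d(p,x)$, I would show that every $x\in S(p)$ satisfies $\tfrac23 g(p)<g(x)<\tfrac43 g(p)$, so that any two members of $S(p)$ have $g$-values within a factor of two. I then define $f(x)=x$ for $x\in X$ and, for $p\in st(X,\mathcal V)\setminus X$, let $f(p)$ be an arbitrary element of $S(p)$; property (1) is then immediate.

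For property (2), fix $V=B(x_0,\tfrac14 g(x_0))\in\mathcal V$ and take $U=U_{x_0}$. Then $V\subset B(x_0,\tfrac34 g(x_0))\subset U_{x_0}$, and for $p\in V$ one has $x_0\in S(p)$, whence $g(f(p))<2g(x_0)$ by the comparison; since $f(p)\in S(p)$ gives $d(p,f(p))<\tfrac14 g(f(p))<\tfrac12 g(x_0)$, the triangle inequality yields $d(f(p),x_0)<\tfrac12 g(x_0)+\tfrac14 g(x_0)=\tfrac34 g(x_0)$, so $f(p)\in U_{x_0}$ and hence $V\cup f(V)\subset U_{x_0}$. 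The main obstacle is conceptual rather than computational: a genuine retraction into $X$ cannot be produced by averaging, which would land in $\ell^\infty\setminus X$, so one must make a discontinuous selection and control only its displacement. Packaging the cover into the single Lipschitz function $g$, rather than choosing radii separately at each point, is exactly what keeps this displacement bounded and forces the crude selection into the correct $U_{x_0}$, even when $X$ is not closed in $\ell^\infty$.
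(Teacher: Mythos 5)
Your argument is correct except for one degenerate case, and the comparison here is necessarily lopsided: the paper does not prove this lemma at all, it imports it from Spanier's tautness paper \cite{AS}. Your proof is thus a self-contained substitute, in the same spirit as Spanier's (a discontinuous selection into $X$ with controlled displacement), but with one notable structural choice: packaging the cover into the single $1$-Lipschitz function $g$ instead of choosing a radius $r_x$ separately at each $x\in X$. This is exactly the right move. A naive pointwise choice of radii founders when $X$ is not closed in $\ell^\infty$: a point $p$ of the star can lie in balls $B(x,\tfrac14 r_x)$ with wildly incomparable radii, and then no single value $f(p)$ serves all of them; Lipschitz continuity of $g$ forces the radii of overlapping members of $\mathcal{V}$ to be comparable (your two-sided estimate on $S(p)$ — where in fact the lower bound sharpens to $\tfrac45 g(p)<g(x)$ — is precisely this), which is what makes the arbitrary selection $f(p)\in S(p)$ land in $B(x_0,\tfrac34 g(x_0))\subset U_{x_0}$ for \emph{every} $V\ni p$ simultaneously. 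It is also worth noting that your proof never uses the linear structure of $\ell^\infty$, so it establishes the lemma for $X$ inside an arbitrary metric space; this is consistent with $f$ being required only to be a function, the Banach structure of $\ell^\infty$ being needed elsewhere in the paper (for the convex fillings), not here.

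The one genuine flaw: $g$ can be identically $+\infty$. The supremum defining $g$ need not be realized by a single member of $\mathcal{U}$ — take $X$ bounded and $\mathcal{U}=\{B(0,n)\mid n\in\mathbb{N}\}$, where $g\equiv\infty$ but no $U$ contains any $B(x,\tfrac34 g(x))=\ell^\infty$ — so the step ``fix $U_x\in\mathcal U$ with $B(x,\tfrac34 g(x))\subset U_x$, which exists by the definition of the supremum'' fails there, and $\mathcal{V}=\{\ell^\infty\}$ cannot satisfy (2). The repair is one line: replace $g$ by $\min(g,1)$, which is still $1$-Lipschitz and strictly positive on $\bigcup\mathcal{U}$; since $\tfrac34\min(g(x),1)<g(x)$, the choice of $U_x$ is again licensed by the definition of the supremum, and every estimate you use (the comparison on $S(p)$, the bound $g(f(p))<2g(x_0)$, and the final triangle inequality) relies only on $1$-Lipschitzness and the defining inequality $d(p,x)<\tfrac14 g(x)$, so it all goes through verbatim. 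With that cap, the proof is complete and correct.
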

\begin{Definition}
A map $f:X\rightarrow Y$ between metric spaces is called \emph{coarse} map, if inverse image of a bounded set is bounded and there exist a non decreasing function $\rho:\rr_+\rightarrow \rr_+$ such that 
\[d(f(x),f(y))\leq \rho (d(x,y))
\]
\end{Definition}
The next proposition basically says that a boundedly supported cochain in $X$ can be extended to a boundedly supported cochain in $N(X)$ for an appropriate neighborhood $N(X)$ of $X$ in $\ell^\infty$.
\begin{Prop}\label{proj map and nbd}
    Let $X$ be a subspace of $\ell^\infty$. Then for any  $\phi\in \Cb(X)$, there exists a neighborhood $N(X)$ of $X$ in $\ell^\infty$ and a coarse map $f:N(X)\rightarrow X$ such that $f^*(\phi)\in \Cb(N(X))$ and $i^*f^*(\phi)=\phi$ where $i:X\rightarrow N(X)$ is the inclusion.
\end{Prop}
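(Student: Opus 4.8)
The plan is to take $f$ to be precisely the projection supplied by the tautness criterion (Lemma~\ref{tautness criterion}) for a suitably chosen cover, and to set $N(X)=st(X,\mathcal{V})$. Observe first that the identity $i^{*}f^{*}(\phi)=\phi$ will then be automatic: the lemma gives $f|_{X}=\mathrm{id}_{X}$, so $f\circ i=\mathrm{id}_{X}$ and hence $i^{*}f^{*}=(f\circ i)^{*}=\mathrm{id}$. Thus the whole content of the proposition reduces to two points forced by a single good choice of the input cover $\mathcal{U}$: that $f$ be coarse, and that $f^{*}(\phi)$ have bounded $\spp{\cdot}$.

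First I would convert the hypothesis $\phi\in\Cb(X)$ into a vanishing statement. Writing $n=\deg\phi$, since $\spp{\phi}$ is bounded there is a basepoint $b$ and a radius $\mathcal{R}$ so that whenever $d(x,b)>\mathcal{R}$ the point $(x,\dots,x)$ does not lie in the closure $\overline{\abs{\phi}}$ of the support of $\phi$; by the definition of the product topology this produces a neighborhood $U_{x}$ of $x$ in $X$ with $\phi|_{U_{x}^{n+1}}=0$. I would then build an open cover $\mathcal{U}$ of $X$ in $\ell^{\infty}$ of \emph{uniformly bounded diameter}: one bounded set $U_{0}\subset B(b,\mathcal{R}')$ covering $\{x\in X:d(x,b)\le\mathcal{R}\}$, together with small open sets $\tilde{U}_{x}$ (for $d(x,b)>\mathcal{R}$) chosen so that $\tilde{U}_{x}\cap X\subset U_{x}$ and $\diam(\tilde{U}_{x})\le C$. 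Shrinking the $\tilde{U}_{x}$ costs nothing for covering purposes, so the uniform diameter bound and the subordination to the vanishing neighborhoods are compatible.

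Now apply Lemma~\ref{tautness criterion} to $\mathcal{U}$ to obtain $\mathcal{V}$ and $f:st(X,\mathcal{V})\to X$, and put $N(X)=st(X,\mathcal{V})$. Coarseness of $f$ follows from the diameter bound: each $y\in N(X)$ lies in some $V\in\mathcal{V}$ meeting $X$ with $V\cup f(V)\subset U$ for some $U\in\mathcal{U}$, so $d(y,f(y))\le\diam(U)\le C$; hence $f$ moves points by at most $C$, giving the control function $\rho(t)=t+2C$ and the boundedness of preimages of bounded sets. For the support estimate, take $y$ with $d(y,b)$ large; then $f(y)\in X$ is still far from $b$, so the cover element $U$ with $V\cup f(V)\subset U$ cannot be $U_{0}$ and must be some $\tilde{U}_{x}$. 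Consequently $f(V)\subset\tilde{U}_{x}\cap X\subset U_{x}$, so for every $(y_{0},\dots,y_{n})\in V^{n+1}$ we have $(f(y_{0}),\dots,f(y_{n}))\in U_{x}^{n+1}$ and therefore $(f^{*}\phi)(y_{0},\dots,y_{n})=\phi(f(y_{0}),\dots,f(y_{n}))=0$. Thus $f^{*}\phi$ vanishes on the neighborhood $V^{n+1}$ of $(y,\dots,y)$, so $(y,\dots,y)\notin\overline{\abs{f^{*}\phi}}$, which shows $\spp{f^{*}\phi}$ is bounded and hence $f^{*}\phi\in\Cb(N(X))$.

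The one delicate point, and the step I expect to demand the most care, is the construction of $\mathcal{U}$: it must at once be a genuine cover of $X$ in $\ell^{\infty}$, carry uniformly bounded diameters (the source of coarseness), and away from $b$ refine into the sets $U_{x}$ where $\phi$ is locally zero (the source of the support bound). Once $\mathcal{U}$ is arranged so these three requirements do not conflict, the remainder is the bookkeeping verification that the tautness map transports the local vanishing of $\phi$ outward to a local vanishing of $f^{*}\phi$, which is exactly what bounds $\spp{f^{*}\phi}$.
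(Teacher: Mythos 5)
Your proof is correct and takes essentially the same route as the paper's: both convert $\spp{\phi}$ bounded into local vanishing neighborhoods, build from these (plus one bounded set near the basepoint) a cover of $X$ in $\ell^\infty$ of uniformly bounded diameter, apply Lemma~\ref{tautness criterion} to get $f$ and set $N(X)=st(X,\mathcal{V})$, then deduce coarseness of $f$ from the displacement bound $d(y,f(y))\le\diam(U)$ and boundedness of $\spp{f^{*}\phi}$ from the fact that far from the basepoint the set $U\supset V\cup f(V)$ must be one of the vanishing neighborhoods, with $i^{*}f^{*}\phi=\phi$ immediate from $f|_{X}=\mathrm{id}_{X}$. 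If anything, your write-up is more explicit than the paper's about the cover sets being open in $\ell^\infty$ and about the exact control function for $f$.
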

\begin{proof}
The proof is similar to the proof of tautness of Alexander-Spanier cohomology~\cite{AS}.
Suppose, $\phi\in \Cb(X)$.
That means there is a bounded set $B$ such that
for any $a\in X-B$, we can choose a neighborhood $U_a$ so that $U_a^{*+1}\cap |\phi|=\emptyset$.
Cover $B$ with a bounded open set $U_B$ in $X$.
Let $\mathcal{U}=U_B\cup \{U_a\}_{a\in X-B}$ be the open cover of $X$.
We can choose these open sets so that their diameters are less than $r$ for some $r\geq 0$.
Then Lemma \ref{tautness criterion} yields a refinement of  $\mathcal{V}$ of $\mathcal{U}$, and a map $f:st(X,\mathcal{V})\rightarrow X$ such that 
$f(a)=a$ for all $a\in X$ and for each $V\in \mathcal{V}$ with $V\cap X\neq \emptyset$, there is a $U\in \mathcal{U}$ such that $V\cup f(V)\subset U$.

By the second property of $f$ in Lemma~\ref{tautness criterion}, it follows that $d(x,f(x))\leq r$ for all $x\in st(X,\mathcal{V})$.
Hence, $d(f(x),f(y))\leq 2r+d(x,y)$ by triangle inequality which means $f$ has an upper control function. 
Furthermore $f$ sends unbounded sets to unbounded sets because $d(y,f(x))\geq d(y,x)-r$ by triangle inequality. Hence $f$ is a coarse map.

We let $N(X):=st(X,\mathcal{V})$. To show that $f^*(\phi)\in \Cb(N(X))$, we notice that if an element $V\in \mathcal{V}$ is far from $U_B$, then $V\cup f(V)$ does not touch $U_B$, and hence $f^*(\phi)|_{V}\subset U_a$ for some $a\in X-B$.
Finally $i^*f^*(\phi)=\phi$ because $f|_X=id_X$.
\end{proof}

Now we are ready to prove our main theorem which basically mimics the proof of Theorem~\ref{easier theorem}.

\begin{proof}[\textbf{Proof of \ref{main theorem}}]
	By the long exact sequence, we need to show $\H(\Cb(X)/\Cx(X))=0$.
 In other words, for $\phi \in \Cb[n](X)$ with $d\phi \in \Cx[n+1](X)$ we need to find $\psi \in \Cb[n-1](X)$ so that $\phi - d\psi \in \Cx[n]$.
	By Kuratowski's embedding theorem, $X$ can be embedded inside $\ell^\infty$.
	Proposition \ref{proj map and nbd} produces a neighborhood of $N(X)$ and a coarse map $f:N(X)\rightarrow X$ such that  $\phi'=f^*(\phi)\in \Cb(N(X))$.
	Let $U$ be a bounded neighborhood of $ \spp{\phi' }$ in $X$ and for each $x \in X - \spp{\phi' } $ choose a neighborhood $U_{x}$ such that $U_{x}^{n+1} \cap \Supp{\phi}=\emptyset$.
	Let $\cU$ denote the collection of $U_{x}$ together with $U$.
	Moreover, we have $d(\phi')=d(f^*(\phi))=f^*(d(\phi))$ is coarse  since $d(\phi)$ is coarse and $f$ is a coarse map.
	
	Proposition \ref{main lemma} produces the chain homotopy $G:\cf^F(X)\rightarrow \cf[*+1](N(X))$, which we use to define a linear map $D:\cf(X)\rightarrow \cf[*+1](N(X))$ by setting
	
	\[D(\sigma^n)=\begin{cases}G(\sigma^n)& \text{if $\sigma^n \in \cf[n]^F(X)$ }
	 \\
	0 & \text{otherwise}
	\end{cases}
	\]
	
Let $i:\cf(X)\rightarrow \cf(N(X))$ be the inclusion map.
	We now define
	\[
		T= i - \partial D - D \partial .
	\]
        And dually

	\[
		T^{*} = i^* -  D^{*} d - d D^{*}
	\]
By construction
 \[
 \phi-d D^*(\phi')=i^*(\phi')-dD^*(\phi')=T^*(\phi')+D^*d(\phi').
 \]
  Thus  $\psi=D^{*}(\phi')$ would be the desired cochain if we can prove $T^*(\phi')$ and $D^*d(\phi')$ are coarse cochains and $D^*(\phi')$ is a boundedly supported cochain.
  
  We claim that $T^{*}(\phi')$ is coarse.
   By Proposition~\ref{main lemma}\eqref{i:mlchain homotopy}, $T(\gs)=VS(\gs)$ for all $\gs\in \cff[n](X)$.
   Let $\gs^{n}$ be of some fixed diameter $r$.
If $d(\gs^{n},b) >  \mu_{n}(r)$, then $\gs^{n} \in \cff[n](X)$ and it follows that $T(\gs)$ is supported by $\cU$.
	 If all vertices of $\gs^{n}$ are outside of $\rho_{n}(r)$-neighborhood of $U$, then $\Supp{T(\gs)}$ does not meet $U^{n+1}$ and therefore $T^{*}(\phi')(\gs)=\phi'(T(\gs))=0$, since $\phi |_{U^{n+1}_{x}}=0$ for all $U_{x}$. 
	 Since $U$ is bounded, the claim follows.
	
	We claim that $D^{*}d(\phi')$ is coarse.
 Recall from Lemma~\ref{main lemma}\eqref{i:mlsupport control} that $\Supp{G\gs}\subset N_{\rho_{n}(\diam(\gs))}(\gs)$.
 Therefore, by construction $\Supp{D\gs}\subset N_{\rho_{n}(\diam(\gs))}(\gs)$.
	So, $D^{*}$ preserves coarseness, and the claim follows since $d(\phi')$ is coarse.

Finally, we claim that $D^*(\phi')\in \Cb[*+1](X)$.
Because of the property~\ref{main lemma}\eqref{i:mpsmall simplex} of $G$, we can choose a neighborhood $V$ of $x$ for all $x\notin ||\phi'||$ except points in some bounded set $B$, such that $D_*(V^{*+1})$ does not intersect $|\phi'|$.
That implies $x\notin ||D^*\phi'||$ for all $x \notin ||\phi'||\cup B$.
Since $\phi'\in \Cb[*](N(X))$, the claim follows.

\end{proof}

Combining Theorem \ref{main theorem} and Proposition \ref{p:bdd=singular} we get the following.
\begin{Corollary}\label{Hx=Has}
If $X$ is unbounded,  uniformly contractible at infinity, 
then for any $b\in X$
\[\Hx(X; R)=\begin{cases} 0 & \text{if  $*=0$}\\   \varinjlim \Hr[*-1](X-N_r(b); R) & \text{otherwise}
    \end{cases}\]

\end{Corollary}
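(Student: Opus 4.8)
The plan is to obtain this as a direct composition of the two principal results already established in the paper, since the hypotheses of Corollary~\ref{Hx=Has} are precisely the union of the hypotheses needed for Theorem~\ref{main theorem} and for part~\eqref{i:unbdd} of Proposition~\ref{p:bdd=singular}. First I would invoke Theorem~\ref{main theorem}: the assumption that $X$ is uniformly contractible at infinity is exactly what that theorem requires, so the inclusion $\Cx(X;R)\hookrightarrow\Cb(X;R)$ induces an isomorphism $\Hx(X;R)\cong\Hb(X;R)$ in every degree.

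Next I would bring in the computation of the boundedly supported cohomology. Since $X$ is assumed unbounded, part~\eqref{i:unbdd} of Proposition~\ref{p:bdd=singular} applies verbatim and yields, for the chosen basepoint $b\in X$,
\[
\Hb(X;R)=\begin{cases} 0 & \text{if } *=0,\\ \varinjlim \Hr[*-1](X-N_r(b);R) & \text{otherwise.}\end{cases}
\]
Composing this identification with the isomorphism $\Hx(X;R)\cong\Hb(X;R)$ from the previous step transports the right-hand side to $\Hx(X;R)$, giving exactly the asserted formula, including the degree-zero vanishing and the degree shift by one in the higher degrees.

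In this sense there is essentially no genuine obstacle: the corollary is a formal consequence, and the only thing to check is that the two cited statements are being applied under compatible hypotheses, which they are, as uniform contractibility at infinity and unboundedness are assumed simultaneously. If I wanted to be careful about one subtle point, it would be confirming that the isomorphism of Theorem~\ref{main theorem} is natural enough to be composed degreewise with the direct-limit description of $\Hb(X;R)$; but since both are isomorphisms of graded $R$-modules indexed by the cohomological degree $*$, the composition presents no difficulty and the case distinction on $*$ passes through unchanged.
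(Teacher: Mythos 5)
Your proposal is correct and is exactly the paper's argument: the corollary is stated immediately after the sentence ``Combining Theorem~\ref{main theorem} and Proposition~\ref{p:bdd=singular} we get the following,'' i.e.\ the isomorphism $\Hx(X;R)\cong\Hb(X;R)$ from Theorem~\ref{main theorem} composed with the computation of $\Hb(X;R)$ for unbounded $X$ in Proposition~\ref{p:bdd=singular}\eqref{i:unbdd}. Your extra remark about compatibility of hypotheses and degreewise composition is fine but not needed beyond what the paper implicitly assumes.
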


\section{Computation of Coarse cohomology of the complement}\label{coarse and AS}
 We start by briefly reviewing the notion of coarse complement from~\cite{BB21}.
 Roughly the idea is that coarse complement of a subset $A$ in $X$ is determined by the collection of subsets $\mathcal{S}$ of $X$ which are \textit{coarsely disjoint} from $A$:
\[\mathcal{S}:=\{B\subset X \mid N_r(B) \cap N_r(A) \text{ is bounded for any $r\geq 0$} \}
\]
Coarse cohomology of the complement of $A$ is defined to be the cohomology of the following complex 
\[	\Cx[n](X-A)= \{ \phi \in \C[n](X) \mid \forall B \in \mathcal{S} \quad \phi|_{B}\in \Cx[n](B)\}
\]
with the usual coboundary operator $d$.
We denote the cohomology of the above complex by $\Hx(X-A)$.
It was shown in \cite{BB21} that $\Hx(X-A)$ can be computed as coarse cohomology of a single space which
we now recall from \cite{BB21}.
Consider the following pseudometric $d_A$ on $X$.
\[d_A(x,y):=\min\{d(x,A)+d(y,A), d(x,y)\}
\]
Note that $d_A(x,y)=0$ if and only if $(x,y)\in \bar{A} \times \bar{A}$ where $\bar{A}$ is the closure of $A$ in $X$.
Hence the pseudometric $d_A$ becomes a metric on the quotient space $X/\bar{A}$.
\begin{Prop}[\cite{BB21}]\label{coa comp space}
The quotient map $q:X\rightarrow X/\bar{A}$ induces isomorphism $\Hx(X-A)\cong\Hx(X/\bar{A})$.
\end{Prop}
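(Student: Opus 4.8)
The plan is to factor the statement through the pseudometric space $(X,d_A)$. Writing $Y=(X/\bar{A},d_A)$, $q\colon X\to Y$ for the quotient, and $*=q(\bar A)$, I would first identify the complement complex $\Cx(X-A)$ with the coarse complex $\Cx(X,d_A)$ of the pseudometric space $(X,d_A)$, and then exploit that $q$ is an isometric quotient which merely collapses the $d_A$-bounded set $\bar A$ to the point $*$. Such a collapse is a coarse equivalence, so its pullback $q^{*}$ should be a cochain homotopy equivalence. Concretely I would fix a section $s\colon Y\to X$ (sending $y\neq *$ to its unique preimage and $*$ to a chosen $a_0\in\bar A$) and the retraction $r=s\circ q$, which collapses $\bar A$ to $a_0$ and fixes everything else. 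Note that both $q$ and $s$ are $d_A$-isometries, since $d_A(x,*)=d(x,A)$ and $d_A$ vanishes on $\bar A\times\bar A$.

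\emph{Identification} $\Cx(X-A)=\Cx(X,d_A)$. Two geometric facts drive this. First, for any $B\in\mathcal{S}$ the collar $\{x\in B:\ d(x,A)\le r\}$ is contained in $N_r(B)\cap N_r(A)$, hence bounded; and on points with $d(\cdot,A)>r$ one has $d_A=d$ for pairs of $d$-distance $\le r$. Thus a $d$-small simplex of $B$ lying outside the collar is a $d_A$-small simplex of the same diameter, and $d_A$-coarseness of $\phi$ forces $\phi|_B$ to be coarse, giving $\Cx(X,d_A)\subseteq\Cx(X-A)$. Conversely, if $\phi$ fails to be $d_A$-coarse, there is a family of simplices near the $d_A$-diagonal on which $\phi\neq0$ that escapes every $d_A$-ball about $*$; since $d_A(\cdot,*)=d(\cdot,A)$, this family has $d(\cdot,A)\to\infty$, so its vertex set $B$ meets each $\{d(\cdot,A)\le M\}$ in a bounded set and therefore lies in $\mathcal{S}$, and on $B$ the family is $d$-small but $d$-unbounded, contradicting $\phi|_B\in\Cx[*](B)$. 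This yields the reverse inclusion.

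\emph{Homotopy equivalence.} Because $q$ and $s$ are $d_A$-isometries, $q^{*}$ and $s^{*}$ carry coarse cochains to coarse cochains (that $s^{*}$ lands in $\Cx(Y)$ is exactly the escaping-family argument above). As $q\circ s=\mathrm{id}_Y$, I obtain $s^{*}q^{*}=\mathrm{id}$ on the nose, while $q^{*}s^{*}=r^{*}$. Since $r$ is $d_A$-distance $0$ from $\mathrm{id}_X$, the standard prism operator
\[(P\phi)(x_0,\dots,x_{n-1})=\sum_{i=0}^{n-1}(-1)^{i}\,\phi\bigl(x_0,\dots,x_i,r(x_i),\dots,r(x_{n-1})\bigr)\]
satisfies $dP+Pd=r^{*}-\mathrm{id}$ (with the usual sign conventions), and because every inserted vertex $r(x_j)$ is $d_A$-equal to $x_j$, the operator $P$ maps $\Cx(X,d_A)$ into itself. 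Hence $q^{*}s^{*}\simeq\mathrm{id}$, so $q^{*}\colon\Cx(Y)\to\Cx(X-A)$ is a cochain homotopy equivalence with homotopy inverse $s^{*}$, and it induces the claimed isomorphism $\Hx(X-A)\cong\Hx(X/\bar{A})$.

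The main obstacle I anticipate is the reverse inclusion in the identification step: extracting from a mere failure of $d_A$-coarseness an honest member $B$ of the coarsely-disjoint family $\mathcal{S}$, and arranging the escaping simplices so that $B$ genuinely satisfies $N_t(B)\cap N_t(A)$ bounded for all $t$. Once the equality $\Cx(X-A)=\Cx(X,d_A)$ is in hand, the remaining work is formal, being the coarse-invariance of coarse cohomology under collapsing a bounded set; the only care needed there is to run the prism homotopy entirely in the $d_A$-metric so that it respects the coarse condition defining $\Cx(X-A)$.
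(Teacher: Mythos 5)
This proposition is imported verbatim from \cite{BB21}; the present paper contains no proof of it, so there is no internal argument to compare yours against. Judged on its own, your proof is correct, and the two points you identify as delicate both go through. The hinge for the identification $\Cx(X-A)=\Cx(X,d_A)$ is the observation that a subset of $(X,d_A)$ is bounded exactly when it lies in $N_M(A)$ for some $M$, since $d_A(x,\bar A)=d(x,A)$. With that in hand, the forward inclusion is as you say, with one small case you gloss: a $d$-small simplex of $B\in\mathcal{S}$ with only \emph{some} vertices in the $r$-collar lies entirely in $B\cap N_{2r}(A)\subset N_{2r}(B)\cap N_{2r}(A)$, which is bounded; and for simplices avoiding $N_r(A)$, where $d_A$- and $d$-diameters coincide, $d_A$-coarseness confines them to some $N_{M(r)}(A)$, and $B\cap N_{M(r)}(A)$ is again bounded because $B\in\mathcal{S}$. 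The extraction you flag as the main obstacle in the reverse inclusion is genuinely unproblematic: if $\phi$ fails $d_A$-coarseness at scale $r$, choose for each $k>r$ a simplex $\sigma_k\in|\phi|$ of $d_A$-diameter at most $r$ with a vertex outside $N_k(A)$; the triangle inequality then forces all vertices of $\sigma_k$ outside $N_{k-r}(A)$ and forces $d_A=d$ on its vertex pairs, so the $\sigma_k$ are $d$-small, their vertex set $B$ meets each $N_M(A)$ in finitely many points, and since $N_t(B)\cap N_t(A)\subset N_t\bigl(B\cap N_{2t}(A)\bigr)$ this yields $B\in\mathcal{S}$ while $\phi|_B$ is manifestly non-coarse. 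In the second half, $q$ and the section $s$ preserve $d_A$ exactly, so $q^*$ and $s^*$ preserve coarse cochains (for $s^*$ this is direct from $s$ being a $d_A$-isometry whose image $q$ maps back onto a bounded set — you do not need the escaping-family argument there), $s^*q^*=\mathrm{id}$ on the nose, and your prism operator $P$ stays inside $\Cx(X,d_A)$ precisely because each inserted vertex $r(x_j)$ is at $d_A$-distance zero from $x_j$, so interpolated simplices have the same $d_A$-diameter and their $\phi$-supports are confined to the same $d_A$-bounded sets. Two cosmetic points worth adding in a write-up: a remark that $\Cx(X,d_A)$ is a subcomplex of $\C(X)$ (immediate, since faces of near-diagonal simplices are near-diagonal), and the standing assumption $\bar A\neq\emptyset$ needed to choose the basepoint $a_0$ for the section.
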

 
Proposition~\ref{coa comp space} allows us to use Theorem~\ref{main theorem} to compute coarse cohomology of the complement.
 
 \begin{Theorem}\label{t:coarse complement}
 If $X/\bar{A}$ is uniformly contractible at infinity 
 and unbounded, then
 \begin{align*}
     \Hx(X-A)
    = &\begin{cases} 0 & \text{if  $*=0$}\\   \varinjlim \Hr[*-1](X-N_r(A)) & \text{otherwise}
    \end{cases}
\end{align*}
 \end{Theorem}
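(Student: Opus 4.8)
The plan is to combine the two ingredients that precede this theorem: the model for the coarse complement (Proposition~\ref{coa comp space}) and the main computational result (Corollary~\ref{Hx=Has}). First I would invoke Proposition~\ref{coa comp space} to replace the coarse cohomology of the complement by the coarse cohomology of a single space: we have $\Hx(X-A)\cong\Hx(X/\bar{A})$, where $X/\bar{A}$ carries the metric induced by $d_A$. By hypothesis $X/\bar{A}$ is uniformly contractible at infinity and unbounded, so Corollary~\ref{Hx=Has} applies directly and yields
\[
\Hx(X/\bar{A})=\begin{cases} 0 & \text{if } *=0\\ \varinjlim \Hr[*-1]\big((X/\bar{A})-N_r(\bar{b})\big) & \text{otherwise,}\end{cases}
\]
where $\bar{b}\in X/\bar{A}$ is any chosen basepoint. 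The natural choice is to take $\bar{b}$ to be the image of $\bar{A}$, i.e.\ the point to which $A$ is collapsed.

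The remaining and genuinely substantive step is to identify the direct system of reduced Alexander--Spanier cohomology groups appearing in Corollary~\ref{Hx=Has} with the one in the statement, namely $\varinjlim \Hr[*-1](X-N_r(A))$. Here I would analyze how $d_A$-balls around the collapsed point $\bar{b}$ pull back under the quotient map $q:X\rightarrow X/\bar{A}$. By definition of $d_A$, a point $x$ satisfies $d_A(q(x),\bar{b})\le r$ precisely when $d(x,A)\le r$ (using that the $d_A$-distance from $x$ to the image of $\bar A$ is exactly $d(x,\bar A)=d(x,A)$). Consequently the $d_A$-neighborhood $N_r(\bar{b})$ in $X/\bar{A}$ corresponds under $q$ to $N_r(A)$ in $X$, and its complement $(X/\bar{A})-N_r(\bar{b})$ is identified with $X-N_r(A)$. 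On this complement the two metrics $d_A$ and $d$ agree up to a controlled distortion, or more precisely the identity map between these subspaces is a homeomorphism, so their Alexander--Spanier cohomology groups coincide. This gives a compatible identification of the two directed systems indexed by $r$, hence an isomorphism of the direct limits.

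The main obstacle I expect is the careful comparison of the two metrics on the complements and the verification that the identification is compatible with the structure maps of the two inverse/direct systems, so that the isomorphism passes to the limit. In particular one must check that the inclusions defining the directed system $\{X-N_r(A)\}_r$ (in the direction of increasing $r$) match the inclusions $\{(X/\bar{A})-N_r(\bar b)\}_r$ under the identification above, so that $\varinjlim$ is taken over the same filtered diagram on both sides. Since Alexander--Spanier cohomology is a topological invariant and the identification $(X/\bar{A})-N_r(\bar{b})\cong X-N_r(A)$ is a homeomorphism commuting with the respective inclusions, this compatibility is essentially formal once the pointwise description of $d_A$-balls around $\bar{b}$ is established; the real care is in that pointwise description, especially handling points equidistant between $A$ and the basepoint region and confirming $d(x,\bar A)=d(x,A)$. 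Assembling these identifications then yields the claimed formula for $\Hx(X-A)$.
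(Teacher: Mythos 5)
Your proposal is correct and follows essentially the same route as the paper: invoke Proposition~\ref{coa comp space} to reduce to $\Hx(X/\bar{A})$, apply Corollary~\ref{Hx=Has} (which is exactly Theorem~\ref{main theorem} combined with Proposition~\ref{p:bdd=singular}), and then identify $\varinjlim \Hr\bigl((X/\bar{A})-N_r(\bar{b})\bigr)$ with $\varinjlim \Hr(X-N_r(A))$. The only difference is that the paper states this last identification as a one-line observation, whereas you correctly supply the verification (that $d_A(q(x),\bar{b})=d(x,A)$, and that $d_A$ and $d$ agree at scales below $2r$ on $X-N_r(A)$, so the complements are homeomorphic compatibly with the inclusions).
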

 
\begin{proof}
If  $X/\bar{A}$ is uniformly contractible at infinity, then Theorem~\ref{main theorem} yields $\Hx(X/\bar{A})=\Hb(X/\bar{A})$.
Combining this with Proposition~$\ref{coa comp space}$ gives us $\Hx(X-A)=\Hb(X/\bar{A})$.

Since $X/\bar{A}$ is unbounded, 
 for any $b\in X/\bar{A}$, 
Proposition~\ref{p:bdd=singular} gives us the following
\begin{align*}
    \Hb(X/\bar{A})
    = &\begin{cases} 0 & \text{if  $*=0$}\\   \varinjlim \Hr[*-1](X-N_r(A)) & \text{otherwise}
    \end{cases}
\end{align*}
Finally we observe that  $\varinjlim \Hr(X/\bar{A}-N_r(b))=\varinjlim \Hr(X-N_r(A))$. That finishes the proof.
\end{proof}
$X$ is called \emph{uniformly contractible away from $A$} if there are two non-decreasing function $\mu,\rho:[0,\infty)\rightarrow [0,\infty)$ such that any ball $B(x,r)$ of radius $r$ centered at $x\in X$ with $d(x,A)\geq \mu(r)$ is contractible inside $B(x,\rho(r))$. 
 
 Let $q:X\rightarrow X/\bar{A}$ be the quotient map.
 We observe that the  ball $B_r(x)$ in $(X,d)$ with center at $x$ is isometric to the ball $B_r(q(x))$ in $(X/\bar{A},d_A)$ if $d(x,A)>2r$.
 That means, if $X$ is uniformly contractible away from $A$,  
 then so is $X/\bar{A}$.
 If $X-N_r(A)\neq \emptyset $ for any $r$, then $X/\bar{A}$ id unbounded.
 Hence, as a consequence of Theorem \ref{t:coarse complement}, we get the following.

\begin{Corollary}
If $X$ is uniformly contractible away from $A$, 
and $X-N_r(A)\neq \emptyset $ for any $r$, then
 \begin{align*}
     \Hx(X-A)
    = &\begin{cases} 0 & \text{if  $*=0$}\\   \varinjlim \Hr[*-1](X-N_r(A)) & \text{otherwise}
    \end{cases}
\end{align*}
\end{Corollary}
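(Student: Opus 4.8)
The plan is to obtain this corollary directly from Theorem~\ref{t:coarse complement} by checking its two hypotheses for the quotient space $(X/\bar{A},d_A)$: that $X/\bar{A}$ is uniformly contractible at infinity, and that it is unbounded. Write $q\colon X\to X/\bar{A}$ for the quotient map and take as basepoint the collapsed point $b=q(\bar{A})$. The computation underlying everything is that $d_A(q(x),b)=d(x,A)$ for all $x\in X$, so that being far from $b$ in $X/\bar{A}$ means exactly being far from $A$ in $X$.

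Unboundedness is the easy half. Since $X-N_r(A)\neq\emptyset$ for every $r$, for each $r$ we may pick $x_r$ with $d(x_r,A)\geq r$; then $d_A(q(x_r),b)=d(x_r,A)\geq r$, so $X/\bar{A}$ contains points arbitrarily far from $b$ and is unbounded.

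The substance of the argument is verifying uniform contractibility at infinity, and here I would transport contractions through the local isometry recorded above: $q$ carries the ball $B(x,s)$ in $(X,d)$ isometrically onto $B(q(x),s)$ in $(X/\bar{A},d_A)$ whenever $d(x,A)>2s$. Let $\mu,\rho$ be the control functions witnessing uniform contractibility of $X$ away from $A$, and assume (harmlessly, by enlarging $\rho$) that $\rho(r)\geq r$. Given a set $B\subset X/\bar{A}$ of diameter $r$ with $d_A(b,B)\geq\mu'(r)$, where $\mu'(r):=\max\{\mu(r)+r,\,2\rho(r)+r\}$, choose $p=q(x_0)\in B$, so that $d(x_0,A)=d_A(b,p)\geq\mu'(r)>2r$. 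Then $B$ lies in the isometric image $B(q(x_0),r)=q(B(x_0,r))$, so it lifts to a set $B'\subset B(x_0,r)$ of the same diameter $r$. Picking any center $x'\in B'$, every point of $B'$ satisfies $d(\cdot,A)\geq d(x_0,A)-r\geq\mu'(r)-r$, whence $d(x',A)\geq\mu(r)$ and $d(x',A)>2\rho(r)$.

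The first of these lower bounds lets me apply the hypothesis to contract the ball $B(x',r)\supset B'$ inside $B(x',\rho(r))$; the second guarantees that the entire contraction neighborhood $B(x',\rho(r))$ still lies in the region where $q$ is isometric, so $q$ sends this contraction of $B'$ to a contraction of $B=q(B')$ inside $B(q(x'),\rho(r))$, a set of diameter at most $2\rho(r)$. Thus $X/\bar{A}$ is $(\mu',2\rho)$-uniformly contractible at infinity with basepoint $b$, and Theorem~\ref{t:coarse complement} then delivers exactly the displayed formula. The one delicate point is the control-function bookkeeping: the threshold $\mu'$ must dominate $2\rho$ (not merely $\mu$) so that both the lifted set and its whole contraction neighborhood remain inside $\{d(\cdot,A)>2\rho(r)\}$; granting that, the rest is a routine push-forward of the contraction along the local isometry $q$.
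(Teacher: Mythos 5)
Your proof is correct and takes essentially the same route as the paper: the paper likewise observes that $q$ restricts to an isometry on balls $B(x,s)$ with $d(x,A)>2s$, deduces from this that $X/\bar{A}$ is uniformly contractible at infinity and unbounded, and then invokes Theorem~\ref{t:coarse complement}. Your explicit control-function bookkeeping (the choice $\mu'(r)=\max\{\mu(r)+r,\,2\rho(r)+r\}$ keeping the whole contraction neighborhood inside the region where $q$ is isometric) simply fills in details the paper leaves implicit.
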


\begin{bibdiv}
\begin{biblist}

\bib{BB21}{article}{
title={Coarse cohomology of the complement}, 
      author={Arka Banerjee and Boris Okun},
      year={2023},
      volume={2308.13965},
      journal={arXiv},
}

\bib{KK}{article}{
	author = {Kapovich, Michael},
	author = {Kleiner, Bruce},
	title = {Coarse {A}lexander duality and duality groups},
	date = {2005},
	issn = {0022-040X},
	journal = {J. Differential Geom.},
	volume = {69},
	number = {2},
	pages = {279\ndash 352},
	url = {http://projecteuclid.org/euclid.jdg/1121449108},
	review = {\MR{2168506 (2007c:57033)}}, }

\bib{Kura}{article}{
author = {Kuratowski, Casimir},
journal = {Fundamenta Mathematicae},
language = {fre},
number = {1},
pages = {534-545},
title = {Quelques problèmes concernant les espaces métriques non-séparables},
url = {http://eudml.org/doc/212809},
volume = {25},
year = {1935},
}

\bib{msw11}{article}{
	author = {Mosher, Lee},
	author = {Sageev, Michah},
	author = {Whyte, Kevin},
	title = {Quasi-actions on trees {II}: {F}inite depth {B}ass-{S}erre trees},
	date = {2011},
	issn = {0065-9266},
	journal = {Mem. Amer. Math. Soc.},
	volume = {214},
	number = {1008},
	pages = {vi+105},
	url = {http://dx.doi.org/10.1090/S0065-9266-2011-00585-X},
	review = {\MR{2867450}}, }

\bib{r93}{article}{
	author = {Roe, John},
	title = {Coarse cohomology and index theory on complete {R}iemannian manifolds},
	date = {1993},
	issn = {0065-9266},
	journal = {Mem. Amer. Math. Soc.},
	volume = {104},
	number = {497},
	pages = {x+90},
	url = {http://dx.doi.org/10.1090/memo/0497},
	review = {\MR{1147350 (94a:58193)}}, }

\bib{r03}{book}{
	author = {Roe, John},
	title = {Lectures on coarse geometry},
	series = {University Lecture Series}, publisher={American Mathematical Society, Providence, RI},
	date = {2003},
	volume = {31}, ISBN={0-8218-3332-4},
	review = {\MR{2007488 (2004g:53050)}}, }

\bib{s81a}{book}{
	Author = {Spanier, Edwin H.},
	Publisher = {Springer-Verlag, New York-Berlin},
	Title = {Algebraic topology},
	Year = {1981}

 }
 
\bib{AS}{article}{author = {E. H. Spanier},
title = {Tautness for Alexander-Spanier cohomology},
volume = {75},
journal = {Pacific Journal of Mathematics},
number = {2},
publisher = {Pacific Journal of Mathematics, A Non-profit Corporation},
pages = {561 -- 563},
year = {1978},
URL = {https://doi.org/}
}

\end{biblist}
\end{bibdiv}
\end{document}